\theoremstyle{definition}
\newtheorem{thm}{Theorem}[section]
\newtheorem{lem}[thm]{Lemma}
\newtheorem{defn}[thm]{Definition}
\newtheorem{cor}[thm]{Corollary}
\newtheorem{pro}[thm]{Proposition}
\newtheorem{rem}[thm]{Remark}
\newtheorem{exa}[thm]{Example}
\numberwithin{equation}{section}
\def\C{\mathbb{ C}}
\newcommand{\bbC}{\mathbb{C}}
\newcommand{\Cq}{\mathbb{C}_q}
\newcommand{\CK}{\mathbb{C}\mathcal{K}}
\def\N{\mathbb{ N}}
\def\Z{\mathbb{ Z}}
\def\A{\mathcal{ A}}
\def\B{\mathcal{ B}}
\def\G{\mathcal{ G}}
\def\J{\mathcal{ J}}
\newcommand{\calJ}{\mathcal{J}}
\def\K{\mathcal{ K}}
\def\L{\mathcal{ L}}
\def\P{\mathcal{ P}}
\newcommand{\calR}{\mathcal{R}}
\def\fg{\mathfrak{g}}
\def\eu{\mathfrak{eu}}
\def\m2{\widetilde{M}_2(\C)}
\def\id{\mathrm{id}}
\def\tr{\mathrm{tr}\,}
\newcommand{\AnnV}{{\rm Ann}(V)}
\newcommand{\CplusCop}{\mathbb{C} \oplus \mathbb{C}^{\text{op}}}
\def\sl{\mathfrak{sl}}
\def\gl{\mathfrak{gl}}
\newcommand{\gltnR}{\gl_{2n}\,(R)}
\newcommand{\gtnr}{\mathfrak{g}_{2n,\rho}}
\newcommand{\gtnrR}{\mathfrak{g}_{2n,\rho}(R)}
    \newcommand{\ovlgtnrJ}{\overline{\mathfrak{g}_{2n,\rho}}(J)}
    \newcommand{\undrlgtnrJ}{\underline{\mathfrak{g}_{2n,\rho}}(J)}
    \newcommand{\ovlgtnrJfg}{\overline{\mathfrak{g}_{2n,\rho}}(J(f,g))}
    \newcommand{\undrlgtnrJfg}{\underline{\mathfrak{g}_{2n,\rho}}(J(f,g))}
    \newcommand{\ovlgtnrJfzgz}{\overline{\mathfrak{g}_{2n,\rho}}(J(f_0,g_0))}
    \newcommand{\undrlgtnrJfzgz}{\underline{\mathfrak{g}_{2n,\rho}}(J(f_0,g_0))}
    \newcommand{\ovlgtnrJtftg}{\overline{\mathfrak{g}_{2n,\rho}}(J(\widetilde{f},\widetilde{g}))}
    \newcommand{\undrlgtnrJtftg}{\underline{\mathfrak{g}_{2n,\rho}}(J(\widetilde{f},\widetilde{g}))}
\newcommand{\Gmu}{\mathcal{G}_{\mu}}
\newcommand{\Gnegmu}{\mathcal{G}_{-\mu}}
\newcommand{\Grho}{\mathcal{G}_{\rho}}
\newcommand{\Gzero}{\mathcal{G}_{0}}
\newcommand{\Jone}{J_1}
\newcommand{\Jtwo}{J_2}
\newcommand{\Jthree}{J_3}
\newcommand{\Jab}{J(\alpha,\beta)}
\newcommand{\Jfg}{J(f,g)}
\newcommand{\Jfgone}{J(f,g_1)}
\newcommand{\Jfonegone}{J(f_1,g_1)}
\newcommand{\Jftwogone}{J(f_2,g_1)}
\newcommand{\Jfzgz}{J(f_0,g_0)}
\newcommand{\Jij}{J_{ij}}
\newcommand{\Jik}{J_{ik}}
\newcommand{\Jil}{J_{il}}
\newcommand{\Jji}{J_{ji}}
\newcommand{\Jjk}{J_{jk}}
\newcommand{\Jjl}{J_{jl}}
\newcommand{\Jki}{J_{ki}}
\newcommand{\Jkj}{J_{kj}}
\newcommand{\Jkl}{J_{kl}}
\newcommand{\Jinpi}{J_{i, n+i}}
\newcommand{\Jinpj}{J_{i, n+j}}
\newcommand{\Jknpl}{J_{k, n+l}}
\newcommand{\ovlJinpi}{\overline{J_{i, n+i}}}
\newcommand{\Jinpirho}{\left(J_{i, n+i}\right)_{\rho}}
\newcommand{\Jinpimrho}{\left(J_{i, n+i}\right)_{-\rho}}
\newcommand{\Jnpii}{J_{n+i,i}}
\newcommand{\Jnpij}{J_{n+i,j}}
\newcommand{\Jnpkl}{J_{n+k,l}}
\newcommand{\ovlJnpii}{\overline{J_{n+i, i}}}
\newcommand{\Lrho}{\mathcal{L}_{\rho}}
\newcommand{\MtwoC}{M_2(\mathbb{C})}
\newcommand{\tildeMtwoC}{\widetilde{M_2}(\mathbb{C})}
\newcommand{\rank}{\text{rank}\:}
\newcommand{\varphitilde}{\widetilde{\varphi}}
\newcommand{\amrho}{a_{-\rho}}
\newcommand{\arho}{a_{\rho}}
\newcommand{\ovlamrho}{\,\overline{a_{-\rho}}\,}
\newcommand{\ovla}{\,\overline{a}\,}
\newcommand{\ovlb}{\,\overline{b}\,}
\newcommand{\tildef}{\widetilde{f}}
\newcommand{\tildeg}{\widetilde{g}}
\newcommand{\Jplus}{J_{+}}
\newcommand{\Jrho}{J_{\rho}}
\newcommand{\Jmrho}{J_{-\rho}}
\newcommand{\Rrho}{R_{\rho}}
\newcommand{\JR}{[J,R]}
\newcommand{\RplusRR}{R_+ + [R,R]}
\newcommand{\RplusRRandJ}{\left(R_+ + [R,R]\right) \cap J}
\newcommand{\DeltaC}{\Delta_C}
\newcommand{\aone}{a_1}
\newcommand{\atwo}{a_2}
\newcommand{\bone}{b_1}
\newcommand{\btwo}{b_2}
\newcommand{\cone}{c_1}
\newcommand{\ctwo}{c_2}
\newcommand{\einpi}{e_{i,n+i}}
\newcommand{\fzero}{f_0}
\newcommand{\fone}{f_1}
\newcommand{\ftwo}{f_2}
\newcommand{\foo}{f_{11}}
\newcommand{\fot}{f_{12}}
\newcommand{\fto}{f_{21}}
\newcommand{\fii}{f_{ii}}
\newcommand{\fij}{f_{ij}}
\newcommand{\fik}{f_{ik}}
\newcommand{\fil}{f_{il}}
\newcommand{\fji}{f_{ji}}
\newcommand{\fjj}{f_{jj}}
\newcommand{\fjl}{f_{jl}}
\newcommand{\fjp}{f_{jp}}
\newcommand{\fkj}{f_{kj}}
\newcommand{\fpi}{f_{pi}}
\newcommand{\gzero}{g_0}
\newcommand{\gone}{g_1}
\newcommand{\gtwo}{g_2}
\newcommand{\gii}{g_{ii}}
\newcommand{\gij}{g_{ij}}
\newcommand{\gip}{g_{ip}}
\newcommand{\gji}{g_{ji}}
\newcommand{\gkj}{g_{kj}}
\newcommand{\gkl}{g_{kl}}
\newcommand{\hone}{h_1}
\newcommand{\htwo}{h_2}
\newcommand{\hii}{h_{ii}}
\newcommand{\hij}{h_{ij}}
\newcommand{\hik}{h_{ik}}
\newcommand{\hji}{h_{ji}}
\newcommand{\hkl}{h_{kl}}
\newcommand{\tone}{t_1}
\newcommand{\ttwo}{t_2}
\newcommand{\tonebar}{\overline{t_1}}
\newcommand{\ttwobar}{\overline{t_2}}
\newcommand{\thetaone}{\theta_1}
\newcommand{\thetatwo}{\theta_2}
\title[Finite dimensional irreducible representations of $\fg_{2n,\rho}(\C_q)$]{Finite dimensional irreducible representations of the nullity $2$ centreless core $\fg_{2n,\rho}(\C_q)$}
\author{Sandeep Bhargava, Hongjia Chen and Yun Gao}
\address{S. Bhargava: Mathematics Department, Humber College, Toronto, Canada M9W 5L7}
\thanks{*Corresponding author: H. Chen, hjchen@ustc.edu.cn} %Email: sndpbhargava@gmail.com (S. Bhargava); hjchen@ustc.edu.cn (H.Chen); ygao@yorku.ca (Y. Gao).}
\address{H. Chen: School of Mathematical Sciences, University of Science and Technology of China,
and Wu Wen Tsun Key Laboratory of Mathematics, Chinese Academy of Science, Hefei 230026, Anhui, P. R. China}
\address{Y. Gao: Department of Mathematics and Statistics, York University, Toronto, Canada M3J 1P3}
\begin{document}

\begin{abstract}
We study the finite-dimensional irreducible representations of the nullity 2 centreless core $\fg_{2n,\rho}(\C_q)$
by investigating the structure of the $\mathrm{BC}_n$-graded Lie algebra $\gtnr(R)$,
where $R$ is a unital involutory associative algebra over a field $k$ of characteristic zero.\vskip5pt

\noindent{\bf Keywords:} $\mathrm{BC}_n$-graded Lie algebra, centreless core, extended affine Lie algebra, involutive ideal,
finite-dimensional irreducible representation.\vskip5pt

\noindent{\bf AMS classification: } 17B05, 17B10, 17B22, 17B65, 17B67, 17B70.
\end{abstract}

\maketitle

\section{Introduction}

Extended affine Lie algebras, or EALAs for short, were first introduced
by physicists H{\o}egh-Krohn and Torresani in \cite{HT}
(under the name of quasi-simple Lie algebras), as a
generalization of finite-dimensional simple Lie algebras and affine Kac-Moody Lie algebras
over the complex numbers $\C$.
The structure theory of EALAs has been extensively studied for decades
(See \cite{AABGP}, \cite{ABFP}, \cite{ABP1}, \cite{ABP2}, \cite{N1}, \cite{N2} and \cite{N3}).
In particular, Allison, Azam, Berman, Gao and Pianzola in \cite{AABGP} proved the Kac conjecture
which implies that the root systems of EALAs are examples of extended affine root systems
which were previously introduced by Saito in \cite{S}.

The representations of EALAs are much less well understood.
Different with the finite and affine setting,
there is no successful highest weight theory of EALAs since the lack of a triangular decomposition.
And earlier work considered only the untwisted toroidal Lie algebras and
a few other isolated examples, such as \cite{G1}, \cite{G2}, \cite{W}, \cite{Ya}.
Recently, Billig and Lau in \cite{BL2} constructed irreducible modules
for twisted toroidal Lie algebras and extended affine Lie algebras by combining the
representation theory of untwisted toroidal algebras \cite{B} with the
technique of thin coverings introduced in their paper \cite{BL1}.

As a result of \cite{ABFP} and \cite{N1}, it is clear that,  all but one family of
centreless Lie tori can be constructed as an extension of a twisted multiloop algebra.
Moreover, Lau gave the classification of the finite-dimensional simple modules of multiloop algebras in \cite{L}.
But in general, from an extend affine Lie algebra (or the centreless Lie tori) it is not easy to give the corresponding multiloop algebra structure (see Remark \ref{remarkforTheorem}).
In the present paper, we will study the finite-dimensional irreducible representations
over the centreless core $\fg_{2n,\rho}(\C_q)$ by investigating the structure of the $\mathrm{BC}_n$-graded Lie algebra $\gtnr(R)$.
From any ideal $\J$ of  $\fg_{2n,\rho}(\C_q)$ such that $\fg_{2n,\rho}(\C_q)/\J$ is finite-dimensional and semisimple,
we relate it to an involutive ideal $J$ of $\C_q$.
The corresponding quotient $\C_q/J$ is just copies of an involutive algebra $R$, where $R$ could be $\C$, $\C \oplus \C^{\mathrm{op}}$,
the matrix algebra of order two with different involutions, or the group algebra of the Klein four group, and so on (for more details see Section 3).
In any of these cases, the Lie algebra $\fg_{2n,\rho}(R)$ is a finite-dimensional simple Lie algebra for which the finite-dimensional simple modules are well known.
Hence we obtain a complete result of the finite-dimensional representations of $\fg_{2n,\rho}(\C_q)$.

The paper is organized as follows.
In Section 2, we investigate the structure of the root graded Lie algebra $\gtnr(R)$.
Moreover, we discuss the relation between the involutive ideals
(invariant under involution) of $R$ and ideals of $\gtnr(R)$.
In Section 3, we focus on the elementary quantum torus in two variables with the natural involution.
We define some involutive ideals and study the quotients.
In the Section 4 and Section 5, we talk about our main results about the finite-dimensional semisimple quotients
and finite-dimensional irreducible modules of $\fg_{2n,\rho}(\C_q)$.
%Theoretically, it is known that each centreless core is isomorphic to some twisted multiloop algebra.
%We end the paper by giving the explicit isomorphism between $\fg_{2n,\rho}(\C_q)$ and some twisted multiloop algebra.

Throughout this paper, denote by $\Z$, $\N$, $\C$ and $\C^*$ the sets of integers, positive
integers, complex numbers and nonzero complex numbers respectively.

\section{$\fg_{2n,\rho}(R)$ is a $\mathrm{BC}_n$-graded Lie algebra}

Let us give the definition of $\gtnr(R)$ first.
Let $k$ be a field of characteristic $0$ and $R$ a unital
associative $k$-algebra with involution.
An involution $\,\bar{\cdot}\,$ in this paper means an anti-involution as defined in \cite{KMRT},
that is, a $k-$linear map from $R$ to $R$ satisfying $\overline{ab} =\ovlb \ovla$
and $\bar{\bar{a}} = a$ for all $a, b \in R$.
Given integer $n \geq 1$ and $\rho \in \{ 1, -1 \}$,
let $\gtnr(R)$ be the subalgebra of the Lie algebra $\gl_{2n}(R)$
generated by $\{f_{ij}(a) , 1 \leq i \neq j \leq n\}$, $\{g_{ij}(a) , 1 \leq i \leq j \leq n\}$
and $\{h_{ij}(a) , 1 \leq i \leq j \leq n\}$, where $f_{ij}(a)$, $g_{ij}(a)$ and $h_{ij}(a)$ are defined:
\begin{equation*}
\begin{split}
f_{ij}(a) &=e_{ij}(a)-e_{n+j,n+i}(\bar{a}), \quad 1 \leq i, j \leq n  \\
g_{ij}(a) &=e_{i,n+j}(a)-\rho e_{j,n+i}(\bar{a}), \quad 1 \leq i, j \leq n   \\
h_{ij}(a) & =e_{n+i,j}(a)-\rho e_{n+j,i}(\bar{a}), \quad 1 \leq i, j \leq n.
\end{split}
\end{equation*}
%The chief goal of this paper is to study the finite-dimensional
%irreducible representations of the Lie algebra $\gtnr(R)$ for $R =
%\Cq $ with $q=\pm 1$, i.e., the elementary quantum torus in two variables.

We will frequently need to know how the generators of $\gtnr(R)$
bracket with one another:

\begin{pro}\label{propn:basic brackets}
Given $1 \leq i,j,k,l \leq n$ and $a,b \in R$,
\begin{equation*}
\begin{split}
[f_{ij}(a),f_{kl}(b)] & =\delta_{jk}f_{il}(ab)-\delta_{il}f_{kj}(ba),  \\
[f_{ij}(a),g_{kl}(b)] & =\delta_{jk}g_{il}(ab)+\delta_{jl}g_{ki}(b\bar{a}),  \\
[f_{ij}(a),h_{kl}(b)] & =-\delta_{il}h_{kj}(ba)-\delta_{ik}h_{jl}(\bar{a}b),  \\
[\gij(a),\gkl(b)]     & = 0 = [\hij(a),\hkl(b)],  \\
[g_{ij}(a),h_{kl}(b)] & =\delta_{jk}f_{il}(ab)+\delta_{li}f_{jk}(\bar{a}\bar{b})-\rho
\delta_{jl}f_{ik}(a\bar{b})-\rho \delta_{ki} f_{jl}(\bar{a}b).
\end{split}
\end{equation*}
Also note that
\begin{equation*}
g_{ij}(a)=-\rho g_{ji}(\bar{a}), \quad h_{ij}(a)=-\rho h_{ji}(\bar{a}).
\end{equation*}
\end{pro}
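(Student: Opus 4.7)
The plan is to verify each identity by direct computation from the definitions of $f_{ij}(a)$, $g_{ij}(a)$, $h_{ij}(a)$, using the standard bracket in $\gl_{2n}(R)$:
\[
[e_{pq}(a),\, e_{rs}(b)] \;=\; \delta_{qr}\, e_{ps}(ab) \;-\; \delta_{ps}\, e_{rq}(ba),
\]
extended bilinearly. The symmetry relations $g_{ij}(a)=-\rho g_{ji}(\bar a)$ and $h_{ij}(a)=-\rho h_{ji}(\bar a)$ are immediate from the definitions, the identity $\bar{\bar a}=a$, and $\rho^{2}=1$: for instance, $-\rho g_{ji}(\bar a) = -\rho e_{j,n+i}(\bar a) + \rho^{2} e_{i,n+j}(a) = g_{ij}(a)$.

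Throughout the bracket computations, the key simplification is that $i,j,k,l \in \{1,\ldots,n\}$, so Kronecker deltas of the form $\delta_{i,n+j}$ or $\delta_{n+i,j}$ always vanish; this collapses most of the $2 \times 2$ cross-terms. In particular, $[g_{ij}(a), g_{kl}(b)] = 0$ follows because every matrix unit in $g_{ij}(a)$ has row index in $\{1,\ldots,n\}$ and column index in $\{n+1,\ldots,2n\}$, so all four resulting brackets $[e_{i,n+j}(a), e_{k,n+l}(b)]$ etc.\ require a forbidden equality like $n+j = k$. The same argument handles $[h_{ij}(a), h_{kl}(b)] = 0$.

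For the mixed brackets $[f,f]$, $[f,g]$, $[f,h]$, I expand each generator as a difference of two matrix units, compute the four $e$-brackets, and regroup the surviving terms by their Kronecker delta. At each step the involution rule $\overline{ab}=\bar b\bar a$ converts expressions like $e_{n+l,n+i}(\overline{ab})$ into the second summand of an $f_{il}(ab)$. For example, in $[f_{ij}(a),f_{kl}(b)]$ only the $\delta_{jk}$ and $\delta_{il}$ pairings survive, and they recombine into $\delta_{jk}f_{il}(ab) - \delta_{il}f_{kj}(ba)$; the analogous pattern, together with care about which factor the bar lands on, gives the $[f,g]$ and $[f,h]$ formulas.

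The main bookkeeping hurdle is the $[g,h]$ bracket, where all four terms in the $2\times 2$ expansion survive and each contributes a Kronecker delta ($\delta_{jk},\delta_{il},\delta_{jl},\delta_{ik}$). I plan to organize the computation by grouping the eight resulting $e_{pq}$-terms into four pairs, one for each delta; in each pair one uses $\overline{xy}=\bar y\bar x$ to recognize the combination $e_{pq}(c)-e_{n+q',n+p'}(\bar c)$ as $f_{pq}(c)$ (or its $\rho$-twisted cousin). This reassembly produces exactly the stated $\delta_{jk}f_{il}(ab) + \delta_{li}f_{jk}(\bar a\bar b) - \rho\delta_{jl}f_{ik}(a\bar b) - \rho\delta_{ki}f_{jl}(\bar a b)$, completing the proposition.
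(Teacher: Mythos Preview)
Your proposal is correct and matches the paper's approach exactly: the paper's proof is the single sentence ``Direct calculation yields this proposition,'' and what you have written is precisely that direct calculation spelled out with the relevant bookkeeping. Your observations about which Kronecker deltas vanish and how the involution recombines the surviving terms are accurate and constitute the full content of the computation.
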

\begin{proof}
Direct calculation yields this proposition.
\end{proof}

\begin{pro}\label{propn:gtnrR is perfect}
If $n \geq 3$, then the Lie algebra $\fg_{2n,\rho}(R)$ is perfect,
i.e.,
$$
[ \fg_{2n,\rho}(R), \ \fg_{2n,\rho}(R) ] = \fg_{2n,\rho}(R).
$$
\end{pro}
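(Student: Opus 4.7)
The plan is to show directly that every generator $f_{ij}(a)$, $g_{ij}(a)$, $h_{ij}(a)$ of $\fg_{2n,\rho}(R)$ already lies in the derived subalgebra $[\fg_{2n,\rho}(R),\fg_{2n,\rho}(R)]$ by writing each one as a single commutator of two elements of the algebra, using the explicit formulas of Proposition \ref{propn:basic brackets}. The hypothesis $n\geq 3$ will enter exactly when, given an index pair $(i,j)$, I need to choose a third index $k$ distinct from both.

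For an off-diagonal generator $f_{ij}(a)$ with $i\neq j$, I would pick $k\in\{1,\ldots,n\}\setminus\{i,j\}$ (available since $n\geq 3$) and apply the first bracket formula:
\[
[f_{ik}(a),\,f_{kj}(1_R)] \;=\; \delta_{kk}f_{ij}(a)-\delta_{ij}f_{ki}(a)\;=\;f_{ij}(a),
\]
since $i\neq j$. For a generator $g_{ij}(a)$ with $i<j$, choosing $k\notin\{i,j\}$ and using the second bracket formula gives
\[
[f_{ik}(a),\,g_{kj}(1_R)] \;=\; g_{ij}(a)+\delta_{kj}g_{ki}(\bar a)\;=\;g_{ij}(a).
\]
For the diagonal case $g_{ii}(a)$, any $k\neq i$ suffices, and the same formula yields $[f_{ik}(a),\,g_{ki}(1_R)] = g_{ii}(a)$, because the spurious $\delta_{ki}$-term is zero. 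An entirely parallel computation with the third bracket formula writes each $h_{ij}(a)$ as $-[f_{kj}(a),\,h_{ik}(1_R)]$ when $i\neq j$ (with $k\notin\{i,j\}$) and $h_{ii}(a) = -[f_{ji}(a),\,h_{ij}(1_R)]$ for any $j\neq i$.

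At each step I should verify that the auxiliary inputs $f_{ik}(a)$, $g_{kj}(1_R)$, $h_{ik}(1_R)$ genuinely belong to $\fg_{2n,\rho}(R)$. The $f$-factors are generators because $i\neq k$; the $g$- and $h$-factors may have the ``wrong" index order, but the symmetry relations $g_{ij}(a)=-\rho g_{ji}(\bar a)$ and $h_{ij}(a)=-\rho h_{ji}(\bar a)$ from Proposition \ref{propn:basic brackets} rewrite them, up to sign, as honest generators.

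I do not anticipate a serious obstacle: the argument is essentially bookkeeping, combining the commutation identities with the availability of a third index when $n\geq 3$. The only point worth emphasizing is that $n\geq 3$ is truly used for $f_{ij}$ and for off-diagonal $g_{ij},h_{ij}$, while the diagonal generators $g_{ii}$ and $h_{ii}$ are already commutators as soon as $n\geq 2$; so the only constraint of the proposition comes from handling the three-index situations simultaneously.
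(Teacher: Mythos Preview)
Your proposal is correct and follows essentially the same approach as the paper: express each generator as a single commutator using Proposition~\ref{propn:basic brackets}, invoking a third index $k\notin\{i,j\}$ made available by $n\geq 3$. The paper handles $h_{ij}(a)$ via $[f_{kj}(-1),h_{ik}(a)]$ rather than your $-[f_{kj}(a),h_{ik}(1_R)]$, and treats all $g_{ij}$ and $h_{ij}$ uniformly without separating the diagonal case, but these are cosmetic differences.
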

\begin{proof}
Since $\gtnrR$ is a subalgebra, $\left[ \gtnrR, \ \gtnrR \right] \subset \gtnrR$.
To show the reverse inclusion, we show that the  generators of $\gtnrR$ can be
expressed as elements of $\left[ \gtnrR, \ \gtnrR \right]$.
Indeed let $a \in R$ and $ 1 \leq i, j \leq n$ with $i \neq j$.
Since $n \geq 3$, we may choose an index $k$ such that $k \neq i$ and $k \neq j$.
But then
\begin{displaymath}
\left[ \fik(a), \ \fkj(1) \right] = \fij(a).
\end{displaymath}
Next take $1 \leq i,j \leq n$, allowing $i = j$.
Again choose $k$ such that $k \neq i$ and $k \neq j$.  Then
\begin{displaymath}
\left[ \fik(a), \ \gkj(1) \right] = \gij(a),
\end{displaymath}
and
\begin{displaymath}
\left[ \fkj(-1), \ \hik(a) \right] = \hij(a).  \qedhere
\end{displaymath}
\end{proof}

We begin by examining the structure of $\fg_{2n,\rho}(R)$. Let
$R_{\pm} = \{a \in R \,|\,\bar{a} = \pm a \}$. Then
\begin{equation}\label{eqn:[R,R] in R-}
R=R_+ \oplus R_- \text{ and }  R_+ + [R,R]=R_+ \oplus (R_- \cap [R,R]).
\end{equation}

We will make use of the following lemma:
\begin{lem}\label{lemma:Wonenburger}
Let $\A$ be an associative $k$-algebra and $\varphi$ an involution
on $\A$,  %, that is, $\varphi \circ \varphi = \mathrm{Id}_\A$ and $\varphi(ab)= \varphi(b) \varphi(a)$ for all $a,b \in \A$.
then the subspace
\begin{equation*}
L = \{ a \in \A \;|\; \varphi(a) =-a\}
\end{equation*}
of $\A$ is a Lie algebra under the operation $[a,b]=ab-ba$.
%$\hfill \square$
\end{lem}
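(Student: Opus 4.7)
The plan is to verify directly the three defining properties of a Lie algebra: that $L$ is a $k$-subspace of $\A$, that the commutator bracket restricts to a map $L \times L \to L$, and that the bracket is antisymmetric and satisfies the Jacobi identity on $L$.

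First I would observe that since $\varphi$ is $k$-linear, $L = \ker(\varphi + \id_{\A})$ is automatically a $k$-subspace of $\A$, so no work is required for the vector space structure.

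The main step, and essentially the only one with any content, is closure under the bracket. For $a, b \in L$ I would compute, using that $\varphi$ is an anti-involution (so $\varphi(xy) = \varphi(y)\varphi(x)$),
\begin{equation*}
\varphi([a,b]) = \varphi(ab - ba) = \varphi(b)\varphi(a) - \varphi(a)\varphi(b) = (-b)(-a) - (-a)(-b) = ba - ab = -[a,b],
\end{equation*}
so $[a,b] \in L$. Finally, antisymmetry and the Jacobi identity hold for the commutator on any associative algebra and are inherited by the subspace $L$.

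There is no real obstacle here; the only point one must take care of is to apply the anti-multiplicative (rather than multiplicative) property of $\varphi$ when computing $\varphi(ab)$, which is what reverses the sign correctly to land back in $L$.
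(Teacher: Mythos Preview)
Your proof is correct; the paper itself states this lemma without proof, treating it as a standard fact. Your direct verification of closure under the bracket via $\varphi([a,b]) = \varphi(b)\varphi(a) - \varphi(a)\varphi(b) = ba - ab = -[a,b]$ is exactly the expected argument.
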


Let
\begin{equation*}
M=\left(
\begin{array}{cc}
0 & I_n \\
\rho I_n & 0 \\
\end{array}
\right) \in M_{2n}(R).
\end{equation*}
Then $M$ is an invertible {\small $2n$} by {\small $2n$} matrix and
$\overline{M}^t=\rho M$, where $A^t$ means the transpose of a matrix $A$.
Using the matrix $M$, define a map
$$
^*:M_{2n}(R) \rightarrow M_{2n}(R)\text{ by } A^*=M^{-1}\overline{A}^tM.
$$
Since $\overline{M}^t=\rho M$,
\begin{equation*}
\begin{split}
(A^*)^*= & (M^{-1}\overline{A}^tM)^*= M^{-1}( \overline{M^{-1}
\overline{A}^tM})^t M =M^{-1} \overline{M}^t A (\overline{M}^t)^{-1} M \\
= & M^{-1}\rho M A \rho^{-1} M^{-1} M =A,
\end{split}
\end{equation*}
and
\begin{equation*}
(AB)^*=M^{-1}\overline{AB}^tM = M^{-1}\overline{B}^t \overline{A}^t
M=B^*A^*.
\end{equation*}
That is, $^*$ is an involution on the associative algebra
$M_{2n}(R)$. By Lemma \ref{lemma:Wonenburger},
\begin{equation*}
\L_{\rho}=\{A \in M_{2n}(R)\,|\,A^*=-A \}.
\end{equation*}
is a Lie subalgebra of $\gl_{2n}(R)$. The general form of a
matrix in $\L_{\rho}$ is
\begin{equation}\label{description:matrix in Lrho}
\left(  \begin{array}{cc}
P & S \\
T & -\bar{P}^t\\
\end{array} \right) \quad \text{with   } \overline{S}^t=-\rho S \quad
\text{and}  \quad     \overline{T}^t=-\rho T,
\end{equation}
where $P,S,T \in M_n(R)$.

For $i = 2, \ldots, n$ and $a \in R$, observe that $\fii(a) = \left(
\fii(a) - \foo(a) \right) + \foo(a) $.  So, using the description in
(\ref{description:matrix in Lrho}),  a spanning set for $\Lrho$ is
\begin{equation*}
\begin{split}
\big\{ \foo(a) | a \in R \big\}
\cup &
\Big( \bigcup_{i=2}^n \left\{ \fii(a) - \foo(a) | a \in R \right\} \Big) \cup
\Big( \bigcup_{1 \leq i \neq j \leq n} \left\{ \fij(a) | a \in R \right\} \Big) \\
\cup &
\Big( \bigcup_{1 \leq i < j \leq n} \left\{ \gij(a) | a \in R \right\} \Big)
\cup \Big( \bigcup_{1 \leq i \leq n} \left\{ \gii(a) | a \in R \right\} \Big)  \\
\cup &
\Big( \bigcup_{1 \leq i < j \leq n} \left\{ \hij(a) | a \in R \right\} \Big) \cup
\Big( \bigcup_{1 \leq i \leq n} \left\{ \hii(a) | a \in R \right\} \Big).
\end{split}
\end{equation*}

Let $\G_{\rho}$ denote the commutator algebra of $\L_{\rho}$, that
is, $\G_{\rho}=[\L_{\rho}, \L_{\rho}]$. For given $Y=(y_{ij}) \in M_{2n}(R)$,
the trace of $Y$ is defined to be $\tr Y = \sum\limits_{i=1}^{2n} y_{ii}
\in R$. Then we have the following Lemma:

\begin{lem}\label{lemma:Grho and trace}
For $n \geq 2$, the elements of $\Grho$ are precisely the elements of $\Lrho$ whose
trace lies in $[R,R]$, i.e.,
$$
\G_{\rho}=\bigl\{Y \in \L_{\rho}\,|\,\tr(Y) \equiv 0 \bmod
[R,R]\bigr\}.
$$
\end{lem}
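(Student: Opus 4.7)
The plan is to prove the two inclusions separately. For the easy direction $\Grho\subseteq\{Y\in\Lrho\mid\tr(Y)\in[R,R]\}$, a direct matrix computation yields
$$\tr([A,B])\;=\;\sum_{i,j}\bigl(a_{ij}b_{ji}-b_{ij}a_{ji}\bigr)\;=\;\sum_{i,j}[a_{ij},b_{ji}]\;\in\;[R,R]$$
for arbitrary $A=(a_{ij}),B=(b_{ij})\in M_{2n}(R)$. Since $\Grho\subseteq\Lrho$ is spanned by commutators of $\Lrho$-elements, this direction is immediate.

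For the reverse inclusion, I would exploit the explicit spanning set of $\Lrho$ displayed just above the lemma. The first step is to verify that every listed generator \emph{other than} $f_{11}(a)$ is already a commutator of two elements of $\Lrho$, hence lies in $\Grho$. These identifications follow directly from Proposition \ref{propn:basic brackets} and use $n\geq 2$ only to supply an auxiliary index; representative examples are
$$f_{ij}(a)=[f_{ij}(a),f_{jj}(1)]\ (i\neq j),\qquad f_{ii}(a)-f_{11}(a)=[f_{i1}(a),f_{1i}(1)]\ (i\geq 2),$$
$$g_{ij}(a)=[f_{ii}(1),g_{ij}(a)]\ (i\neq j),\qquad g_{ii}(a)=[f_{ik}(1),g_{ki}(a)]\ (\text{any }k\neq i),$$
and strictly analogous identities for the $h_{ij}(a)$. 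All such generators have zero trace, so in any expansion of $Y\in\Lrho$ in terms of the spanning set, the full trace of $Y$ is carried by the $f_{11}(a_0)$-term.

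The substantive step is to show $f_{11}(a)\in\Grho$ whenever $a-\bar a\in[R,R]$. Decompose $a=a_++a_-$ with respect to $R=R_+\oplus R_-$; since $a-\bar a=2a_-$ and $\mathrm{char}(k)=0$, the hypothesis becomes $a_-\in[R,R]$. For the $R_-$-piece the identity $[f_{11}(x),f_{11}(y)]=f_{11}([x,y])$ (Proposition \ref{propn:basic brackets}) immediately gives $f_{11}(a_-)\in\Grho$. For the $R_+$-piece I would pick any $j\neq 1$ (possible because $n\geq 2$) and compute the $g$--$h$ bracket
$$[g_{1j}(1),h_{j1}(a_+)]\;=\;f_{11}(a_+)+f_{jj}(\bar a_+),$$
where the $\rho$-terms drop out because $j\neq 1$. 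Subtracting the element $f_{jj}(\bar a_+)-f_{11}(\bar a_+)\in\Grho$ from the previous paragraph and using $a_+=\bar a_+$ yields $f_{11}(2a_+)\in\Grho$, so $f_{11}(a_+)\in\Grho$.

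Assembling everything: any $Y\in\Lrho$ with $\tr(Y)\in[R,R]$ can be written as $Y=f_{11}(a_0)+Z$ with $Z\in\Grho$, and the trace identity $\tr(Y)=a_0-\bar a_0\in[R,R]$ then forces $f_{11}(a_0)\in\Grho$ by the previous step, so $Y\in\Grho$. The principal obstacle is the $R_+$-case: the natural $g$--$h$ bracket relates $f_{11}(a_+)$ to $f_{jj}(\bar a_+)$ rather than to $f_{11}(\bar a_+)$, so one must chain the bracket with the diagonal-difference relation $f_{jj}(\cdot)-f_{11}(\cdot)\in\Grho$ to peel off a clean $f_{11}$-component.
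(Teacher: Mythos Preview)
Your proposal is correct and follows essentially the same approach as the paper's proof: both prove the easy inclusion via $\tr([A,B])\in[R,R]$, then handle the reverse inclusion by expressing each term of the spanning set (other than $f_{11}(\cdot)$) as a single commutator in $\Lrho$, and finally treat $f_{11}(a)$ by splitting $a=a_++a_-$ and combining a $g$--$h$ bracket with an $f$--$f$ diagonal-difference relation for the $R_+$-part and the identity $[f_{11}(x),f_{11}(y)]=f_{11}([x,y])$ for the $R_-$-part. The only differences are cosmetic: the paper packages the $R_+$-step as the single identity $f_{11}(a)=[f_{12}(a),f_{21}(\tfrac12)]+[g_{12}(\tfrac12),h_{21}(a)]$ for $a\in R_+$, whereas you reach the same conclusion in two moves; and some of the individual bracket identities differ (e.g.\ the paper uses $g_{ii}(t)=[f_{ii}(\tfrac12),g_{ii}(t)]$ rather than your $g_{ii}(a)=[f_{ik}(1),g_{ki}(a)]$).
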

\begin{proof}
For any $A, B \in \gltnR$, $\tr([A,B]) \in [R,R]$ implies that
$$
\Grho \subset \left\{ Y \in \Lrho | \tr(Y) \equiv 0 \bmod [R,R] \right\}.
$$

To show the reverse inclusion, we begin by observing that a typical
element $Y$ in $\Lrho$ can be described as
\begin{equation*}
\begin{split}
Y  = & \foo(p) + \sum_{i=2}^n (\fii(q_i) - \foo(q_i)) + \sum_{1 \leq i \neq j \leq n}
       \fij(r_{ij}) + \sum_{1 \leq i < j \leq n} \gij(s_{ij}) \\
& + \sum_{1 \leq i \leq n} \gii(t_i) + \sum_{1 \leq i < j \leq n} \hij(u_{ij})
  + \sum_{1 \leq i \leq n} \hii(v_i),
\end{split}
\end{equation*}
where $p$, $q_i$, $r_{ij}$, $s_{ij}$, $u_{ij}$, $t_i$ and $v_i$
are elements of $R$.  Also note that
\begin{equation*}
\tr(Y) = \tr( \foo(p)) + \sum_{i=2}^n \Big( \tr( \fii(q_i) ) - \tr( \foo(q_i) ) \Big)
= p - \overline{p} = \tr( \foo(p) ).
\end{equation*}
So let us take such an element $Y$ in $\Lrho$ such that its trace lies in $[R,R]$,
that is, $p - \overline{p} \in [R,R]$. We will show that each term in the above sum
lies in $\Grho = [ \Lrho, \, \Lrho ]$.  Indeed, observe that
%\begin{itemize}
\begin{enumerate}
\item
for $ 2 \leq i \leq n$, $\fii(q_i) - \foo(q_i) = \left[ f_{i1} (q_i), \ f_{1i} (1) \right]$,
\item
for $1 \leq i \neq j \leq n$, $\fij(r_{ij}) = \left[ \fii(r_{ij}),  \fij(1) \right]$,
\item
for $1 \leq i < j \leq n$, $\gij(s_{ij}) = \left[ \fii(s_{ij}), \gij(1) \right]$,
$\hij(u_{ij}) = \left[ \fjj(-1), \hij(u_{ij}) \right]$, %% badbox here
\item
for $1 \leq i \leq n$, $\gii(t_i) = \left[ \fii(\frac{1}{2}),  \gii(t_i) \right]$ and
$\hii(v_i) = \left[ \fii(-\frac{1}{2}), \hii(v_i) \right]$.
\end{enumerate}

%\end{itemize}
A slightly fancier argument is needed to show that $\foo(p) \in [\Lrho, \, \Lrho]$.
Since $p \in R$, there exist unique $x \in R_+$ and $y \in R_-$ such that $p = x+y$.
So $p - \overline{p} = 2y \in R_- \cap [R,R]$.
Since $R_+ \oplus \left( R_- \cap [R,R] \right) = R_+ + [R,R]$, we can rewrite $p$ as
$p = a + \sum_k [b_k, \, c_k]$ for some $a \in R_+$ and $b_k, c_k \in R$.
Since $a \in R_+$, we have now
\begin{equation*}
\begin{split}
\foo(p) = & \foo(a) + \sum_k \foo\left( [b_k, \, c_k] \right) \\
= & [ f_{12}(a), \ f_{21}(1/2) ] + [ g_{12}(1/2), \ h_{21}(a) ] + \sum_k \left[ \foo(b_k),  \foo(c_k) \right]
\end{split}
\end{equation*}
is also a member of $[\Lrho, \, \Lrho] = \Grho$ like the other summands of $Y$.
Hence $Y \in [\Lrho, \, \Lrho]$ and
\begin{equation*}
\left\{ Y \in \Lrho | \tr(Y) \equiv 0 \bmod [R,R] \right\} \subset \Grho.  \qedhere
\end{equation*}
\end{proof}

$\Grho$ has an abelian subalgebra
$\mathcal{H}= \Bigl\{\sum\limits_{i=1}^na_i(e_{ii}-e_{n+i,n+i})|a_i \in k\Bigr\}$
of dimension $n$.
The linear functions $\epsilon_i$ in the dual space $\mathcal{H}^*$,
defined by
$\epsilon_i\biggl(\sum\limits_{j=1}^na_j(e_{jj}-e_{n+j,n+j}) \biggr)=a_i$
for $i=1, \ldots, n$, permit us to recognize  $\Grho$'s structure as
that of a graded Lie algebra:
\begin{equation}\label{eqn:Grho is root-graded}
\G_{\rho}=\G_0 \oplus \sum_{i \neq j} \G_{\epsilon_i-\epsilon_j} \oplus
\sum_{i<j}(\G_{\epsilon_i+\epsilon_j} \oplus \G_{-\epsilon_i-\epsilon_j})
\oplus \sum_{i} (\G_{2\epsilon_i} \oplus \G_{-2\epsilon_i})
\end{equation}
where
$\G_{\alpha}=\{x\in \G_{\rho}|[h,x]=\alpha(h)x, \text{ for all } h \in \mathcal{H}\}$
for a given $\alpha \in \mathcal{H}^*$.  Moreover,
\begin{equation}\label{description:nonzero root spaces of Grho}
\G_{\epsilon_i-\epsilon_j}= f_{ij}(R) \; (i \neq j), \quad  \G_{\epsilon_i+\epsilon_j}=g_{ij}(R), \quad
\G_{-\epsilon_i-\epsilon_j}=h_{ij}(R),
%\G_{2\epsilon_i}=g_{ii}(R), G_{-2\epsilon_i}=h_{ii}(R),
\end{equation}
and
\begin{equation}\label{description:zero root space of Grho}
\G_0=f_{11}(R_+ \oplus (R_- \cap [R,R])) \bigoplus \sum_{i}(f_{ii}-f_{11})(R),
\end{equation}
where the right hand sides of (\ref{description:nonzero root spaces
of Grho}) and (\ref{description:zero root space of Grho}) are
defined in a natural way.

Now we need
\begin{lem}\label{lemma:G0 generated by nonzero Galpha}
For $n \geq 2$,
\begin{equation*}
\G_0 =\sum_{\mu \in \Delta_{C}} [\G_{\mu},\G_{-\mu}],
\end{equation*}
where $\Delta_C$ denotes the root system of type $C$.
\end{lem}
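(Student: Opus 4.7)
The containment $\sum_{\mu \in \Delta_C} [\G_\mu, \G_{-\mu}] \subseteq \G_0$ is immediate from the root-space grading (\ref{eqn:Grho is root-graded}), so the real content is the reverse inclusion. By (\ref{description:zero root space of Grho}) together with (\ref{eqn:[R,R] in R-}), it suffices to exhibit every element of the form $(f_{ii} - f_{11})(a)$ with $i \geq 2$ and $a \in R$, together with every $f_{11}(c)$ for $c \in R_+$ and for $c \in [R,R]$, as an element of $\sum_{\mu \in \Delta_C}[\G_\mu, \G_{-\mu}]$.

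I would carry this out in three short computations, each using the bracket formulas from Proposition \ref{propn:basic brackets} applied to one of the three families of root brackets. First, for any $i \neq 1$ and $a \in R$,
\begin{equation*}
[f_{i1}(a), f_{1i}(1)] = f_{ii}(a) - f_{11}(a),
\end{equation*}
so $(f_{ii} - f_{11})(R) \subseteq [\G_{\epsilon_i - \epsilon_1}, \G_{\epsilon_1 - \epsilon_i}]$ for each $i = 2, \ldots, n$; this step requires $n \geq 2$. Second, with $i=1, j=2$,
\begin{equation*}
[f_{12}(a), f_{21}(b)] = f_{11}(ab) - f_{22}(ba) = f_{11}([a,b]) - (f_{22} - f_{11})(ba),
\end{equation*}
and since the subtracted term already lies in the target subspace by Step 1, we conclude $f_{11}([R,R]) \subseteq \sum_\mu [\G_\mu, \G_{-\mu}]$.

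Third, the long-root bracket with $i=1, j=2, b=1$ yields, via the $g_{ij}$--$h_{kl}$ identity,
\begin{equation*}
[g_{12}(a), h_{12}(1)] = -\rho f_{11}(a) - \rho f_{22}(\bar{a}) = -\rho f_{11}(a + \bar{a}) - \rho (f_{22} - f_{11})(\bar{a}),
\end{equation*}
and since $\{a + \bar{a} : a \in R\} = R_+$ (using $\mathrm{char}\, k = 0$), combined with Step 1 this delivers $f_{11}(R_+) \subseteq \sum_\mu [\G_\mu, \G_{-\mu}]$. The three steps together exhaust the decomposition (\ref{description:zero root space of Grho}) of $\G_0$.

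There is no deep obstacle here; the only point requiring care is that each of the natural brackets produces a mixture of $f_{11}$ and $f_{ii}$ contributions, so one must first isolate the off-diagonal part $(f_{ii} - f_{11})(R)$ from the short-root brackets and then use it to strip the unwanted $f_{ii}$ terms off the subsequent long-root and commutator computations.
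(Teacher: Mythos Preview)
Your proof is correct and follows essentially the same route as the paper's: both reduce to the three pieces $(f_{ii}-f_{11})(R)$, $f_{11}([R,R])$, $f_{11}(R_+)$ and handle them with the same $f$--$f$ and $g$--$h$ bracket identities (the paper expresses the latter two pieces directly as sums of two brackets rather than as a bracket plus an $(f_{22}-f_{11})$ correction, but this is a cosmetic difference). One minor terminological slip: the root $\epsilon_1+\epsilon_2$ used in your third step is a \emph{short} root of $C_n$, not a long one.
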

\begin{proof}
By the definition of $\G_\alpha$, we have that $\sum\limits_{\mu \in \DeltaC} [\Gmu, \Gnegmu] \subset \Gzero$.
The reverse inclusion follows from the proof of
Lemma \ref{lemma:Grho and trace} and
\begin{equation*}
\foo([c,d])  = [ \fto(c),  \fot(d) ] - [ \fto(cd),  \fot(1) ]
\in \left[ \G_{\epsilon_2 - \epsilon_1},  \G_{-\left(\epsilon_2 - \epsilon_1\right)}
\right], \;  \forall c,d \in R.  \qedhere
\end{equation*}
\end{proof}

\begin{pro}
If $n \geq 2$, then $\G_\rho$ is a $\mathrm{BC}_n$-graded Lie algebra with
grading subalgebra of type $C_n$ if $\rho=-1$ and type $D_n$ if
$\rho=1$. Furthermore, $\G_\rho=\fg_{2n,\rho}(R)$.
\end{pro}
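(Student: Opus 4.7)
The plan is to first establish the identification $\Grho = \gtnrR$ and then verify the three axioms for a $\mathrm{BC}_n$-graded Lie algebra, using the groundwork laid in Lemma \ref{lemma:Grho and trace} and Lemma \ref{lemma:G0 generated by nonzero Galpha}.

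For $\gtnrR \subset \Grho$, I would observe that each generator $\fij(a)$, $\gij(a)$, $\hij(a)$ of $\gtnrR$ lies in $\Lrho$ by (\ref{description:matrix in Lrho}) and has already been exhibited as a commutator of two elements of $\Lrho$ in items (1)--(4) of the proof of Lemma \ref{lemma:Grho and trace}. Hence $\gtnrR \subset [\Lrho, \Lrho] = \Grho$. (For $n \geq 3$, one could alternatively invoke that $\gtnrR$ is perfect by Proposition \ref{propn:gtnrR is perfect}.) For the reverse inclusion $\Grho \subset \gtnrR$, I would combine the spanning set of $\Lrho$ displayed just before Lemma \ref{lemma:Grho and trace} with the trace condition of Lemma \ref{lemma:Grho and trace} to obtain a spanning set of $\Grho$, and then verify that each spanning element lies in $\gtnrR$: most are generators by definition; the combinations $\fii(a) - \fjj(a) = [\fij(a), \fji(1)]$ are brackets of generators; and the final paragraph of the proof of Lemma \ref{lemma:Grho and trace} already writes $\foo(a)$ for $a \in R_+ + [R,R]$ as a sum of brackets of generators of $\gtnrR$.

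For the $\mathrm{BC}_n$-grading, I would take the grading subalgebra to be $\gtnr(k) \subset \Grho$. Specializing the matrix description (\ref{description:matrix in Lrho}) to $R = k$ (with trivial involution) identifies $\fg_{2n,-1}(k)$ with the split symplectic Lie algebra $\mathfrak{sp}_{2n}(k)$ of type $C_n$, and $\fg_{2n,1}(k)$ with the split orthogonal Lie algebra $\mathfrak{so}_{2n}(k)$ of type $D_n$. The abelian subalgebra $\mathcal{H}$ introduced before (\ref{eqn:Grho is root-graded}) is the standard Cartan subalgebra of $\gtnr(k)$; to see $\mathcal{H} \subset \gtnr(k)$, I would combine $[\fij(1), \fji(1)] = \fii(1) - \fjj(1)$ (for $i \neq j$) with $[\gii(1), \hii(1)] = 4 \fii(1)$ when $\rho = -1$, or with $[\gij(1), \hij(1)] = -(\fii(1) + \fjj(1))$ for $i < j$ when $\rho = 1$ (where $\gii(1) = \hii(1) = 0$).

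The root-space decomposition of $\Grho$ with respect to $\mathcal{H}$ is exactly (\ref{eqn:Grho is root-graded}); the set of weights $\{0\} \cup \{\pm(\epsilon_i - \epsilon_j)\} \cup \{\pm(\epsilon_i + \epsilon_j)\} \cup \{\pm 2\epsilon_i\}$ lies inside $\mathrm{BC}_n \cup \{0\}$, the short-root spaces $(\Grho)_{\pm \epsilon_i}$ being zero (which is permitted in a $\mathrm{BC}_n$-grading). The remaining axiom $(\Grho)_0 = \sum_{\mu \in \mathrm{BC}_n \setminus \{0\}} [(\Grho)_\mu, (\Grho)_{-\mu}]$ reduces to Lemma \ref{lemma:G0 generated by nonzero Galpha}, since the nonzero weights of $\Grho$ coincide with $\Delta_C$. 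The only step requiring genuine work rather than bookkeeping is the identification of $\gtnr(k)$ as the claimed split simple Lie algebra, which comes down to matching the generators $\fij(1), \gij(1), \hij(1)$ against the standard matrix presentations of $\mathfrak{sp}_{2n}$ and $\mathfrak{so}_{2n}$.
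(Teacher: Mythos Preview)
Your approach is correct and aligns with the paper's, which for the $\mathrm{BC}_n$-grading simply cites the decomposition (\ref{eqn:Grho is root-graded})--(\ref{description:zero root space of Grho}), Lemma~\ref{lemma:G0 generated by nonzero Galpha}, and Example~1.16 of \cite{ABG}, and for the equality $\Grho = \gtnrR$ observes via Lemma~\ref{lemma:G0 generated by nonzero Galpha} that the two algebras have the same Lie-algebra generators. Your direct verification of the grading axioms, in lieu of citing \cite{ABG}, is fine and yields the same conclusion.

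One small correction is needed in your argument for $\Grho \subset \gtnrR$. You assert that the final paragraph of the proof of Lemma~\ref{lemma:Grho and trace} writes $\foo(p)$, for $p \in R_+ + [R,R]$, as a sum of brackets of generators of $\gtnrR$. It does not quite: the summand $\sum_k [\foo(b_k), \foo(c_k)]$ appearing there involves $\foo(b_k)$ and $\foo(c_k)$, and $\foo(\cdot)$ (equal indices) is not among the defining generators of $\gtnrR$, nor is it a priori known to lie in $\gtnrR$ without circularity. The fix is to use instead the identity displayed in the proof of Lemma~\ref{lemma:G0 generated by nonzero Galpha},
\[
\foo([c,d]) \;=\; [f_{21}(c),\, f_{12}(d)] \;-\; [f_{21}(cd),\, f_{12}(1)],
\]
which does express each $\foo([b_k,c_k])$ as a combination of brackets of genuine generators. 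This is precisely why the paper's argument for the equality invokes Lemma~\ref{lemma:G0 generated by nonzero Galpha} rather than Lemma~\ref{lemma:Grho and trace}: once $\G_0 = \sum_{\mu \in \Delta_C} [\G_\mu, \G_{-\mu}]$, the nonzero root spaces---which by (\ref{description:nonzero root spaces of Grho}) coincide with the generating set of $\gtnrR$---generate all of $\Grho$ as a Lie algebra, giving $\Grho \subset \gtnrR$ in one stroke.
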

\begin{proof}
The first part of the theorem comes from (\ref{eqn:Grho is
root-graded})-(\ref{description:zero root space of Grho}), Lemma
\ref{lemma:G0 generated by nonzero Galpha} and Example 1.16 in \cite{ABG}.
For more information on $\mathrm{BC}_n$-graded Lie algebras, see \cite{ABG}.
Now, by Lemma \ref{lemma:G0 generated by nonzero Galpha} again,
we get that $\G_\rho$ and $\fg_{2n,\rho}(R)$ have the same generators.
\end{proof}

\begin{rem}
We gave two definitions of the Lie algebra $\fg_{2n,\rho}$:
It is easy to show the perfectness from the definition given by generators.
The second one can be used for writing the elements of $\fg_{2n,\rho}$ explicitly and for presenting the $\mathrm{BC}$-graded structure of $\fg_{2n,\rho}$.
\end{rem}

Thus,
\begin{cor}\label{cor:gtnrR} If $n \geq 2$, then
\begin{equation*}
\fg_{2n,\rho}(R) =f_{11}(R_+ + [R,R]) \oplus
\sum_{i=2}^n(f_{ii}-f_{11})(R) \oplus \sum_{i \neq j} f_{ij}(R)
\oplus \sum_{i \leq j} g_{ij}(R) \oplus \sum_{i \leq j} h_{ij}(R).
\end{equation*}
\end{cor}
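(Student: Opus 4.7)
My plan is to obtain the corollary essentially by assembling the pieces already established in this section, since most of the real work has been done in the proof that $\G_\rho = \fg_{2n,\rho}(R)$ and in the explicit description of the root-space decomposition of $\G_\rho$.

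First I would invoke the preceding Proposition (with $n \geq 2$) to identify $\fg_{2n,\rho}(R)$ with $\G_\rho$. Next, I would write down the root-space decomposition (\ref{eqn:Grho is root-graded}) and substitute into it the explicit descriptions of the nonzero root spaces from (\ref{description:nonzero root spaces of Grho}) and of the zero root space from (\ref{description:zero root space of Grho}). This yields
\begin{equation*}
\fg_{2n,\rho}(R) = f_{11}\bigl(R_+ \oplus (R_- \cap [R,R])\bigr) \oplus \sum_{i=2}^n (f_{ii}-f_{11})(R) \oplus \sum_{i \neq j} f_{ij}(R) \oplus \sum_{i<j} g_{ij}(R) \oplus \sum_{i} g_{ii}(R) \oplus \sum_{i<j} h_{ij}(R) \oplus \sum_{i} h_{ii}(R).
\end{equation*}

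Then I would apply (\ref{eqn:[R,R] in R-}), which gives the identification $R_+ + [R,R] = R_+ \oplus (R_- \cap [R,R])$, to replace the $f_{11}$-summand by $f_{11}(R_+ + [R,R])$. Finally, I would combine $\sum_{i<j} g_{ij}(R) \oplus \sum_i g_{ii}(R)$ into the shorthand $\sum_{i \leq j} g_{ij}(R)$ (and similarly for $h$); this is purely notational once one keeps in mind the symmetry $g_{ij}(a) = -\rho g_{ji}(\bar a)$ and $h_{ij}(a) = -\rho h_{ji}(\bar a)$ from Proposition \ref{propn:basic brackets}, which is precisely why the sum is taken over the range $i \leq j$ rather than over all pairs.

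The only point requiring a sentence of justification is that the sum is in fact direct: this follows because the decomposition (\ref{eqn:Grho is root-graded}) is a direct sum of distinct $\mathcal{H}$-weight spaces, and the internal decomposition of $\G_0$ in (\ref{description:zero root space of Grho}) is also direct as a consequence of the matrix description (\ref{description:matrix in Lrho}) (the matrix units $e_{ii} - e_{n+i,n+i}$ for $i=1,\ldots,n$ are linearly independent over $R$). I do not anticipate any serious obstacle, since all the hard work—establishing that $\G_\rho$ has a $\mathrm{BC}_n$-grading and coincides with the subalgebra generated by the $f_{ij}, g_{ij}, h_{ij}$—has already been carried out in the preceding lemmas and proposition.
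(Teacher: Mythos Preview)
Your proposal is correct and follows exactly the route the paper intends: the corollary is presented there with only the word ``Thus,'' so it is meant to be read off immediately from the preceding Proposition (identifying $\fg_{2n,\rho}(R)$ with $\G_\rho$) together with (\ref{eqn:Grho is root-graded})--(\ref{description:zero root space of Grho}) and the identity (\ref{eqn:[R,R] in R-}). You have simply spelled out the details the paper leaves implicit.
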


We get some familiar Lie algebras in $\fg_{2n,\rho}(R)$ for
different choices of $R$.

\begin{exa}\label{eg:gtnrC}
\begin{enumerate}
\item
Let $R = \C$ with the identity map as its involution.
Then $\fg_{2n,\rho}(\C)$ is a semisimple Lie algebra for $n \geq \frac{3+\rho}{2}$ in \cite{H}.
%and for $n=1$ we have $\fg_{2,1}(\C)=0$ and $\fg_{2,-1}(\C) \cong \sl_2(\C)$ .
More explicitly, $\fg_{2n,-1}(\C)$ is the simple Lie
algebra of type $C_n$ for $n \geq 1$ and $\fg_{2n,1}(\C)$ is the
simple Lie algebra of type $D_n$ for $n \geq 3$. For $n=2$,
$\fg_{4,1}(\C)$ is the semisimple Lie algebra of type $D_2$, which
is two copies of $\sl_2(\C)$.

\item
Let $R = \C[t^{\pm 1}]$ be the Laurent polynomial and the involution is also the identity map.
Then $\fg_{2n,\rho}(\C[t^{\pm 1}]) \cong \fg_{2n,\rho}(\C) \otimes \C[t^{\pm 1}]$ is
the corresponding centerless core of an untwisted affine Kac-Moody algebra in \cite{K}.
\end{enumerate}
\end{exa}

\begin{exa}\label{eg:gtnrM}
Let $R = M_m(k)$ be the $m$ by $m$ matrix algebra with
involution $\bar{\;\;}$ defined by $\overline{A} = A^t$. Then $\fg_{2n,\rho}(M_m(k))$ is
isomorphic to $\fg_{2nm,\rho}(k)$ for $n \geq 2$.
\end{exa}
\begin{proof}
The isomorphism $\varphi$ is given by
$$
f_{ij}(E_{kl}) \mapsto  f_{(i-1)m+k,(j-1)m+l}(1), \text{ for all } 1
\leq i \neq j \leq n, \; 1 \leq k , l \leq m\,;
$$
and for all $1 \leq i < j \leq n, \; 1 \leq k , l \leq m\,$
$$
g_{ij}(E_{kl}) \mapsto  g_{(i-1)m+k,(j-1)m+l}(1),\;\; h_{ij}(E_{kl})
\mapsto h_{(i-1)m+k,(j-1)m+l}(1);
$$
if $\rho=-1$, for all $1 \leq i  \leq n, \; 1 \leq k \leq l \leq m$,
$$
g_{ii}(E_{kl}) \mapsto  g_{(i-1)m+k,(i-1)m+l}(1),\;\; h_{ii}(E_{kl})
\mapsto h_{(i-1)m+k,(i-1)m+l}(1) ;
$$
if $\rho=1$, for all $1 \leq i  \leq n, \; 1 \leq k < l \leq m$,
$$
g_{ii}(E_{kl}) \mapsto  g_{(i-1)m+k,(i-1)m+l}(1),\;\; h_{ii}(E_{kl})
\mapsto h_{(i-1)m+k,(i-1)m+l}(1)
$$
where $E_{ij}$ is the $m \times m$ matrix with $1$ at $(i, j)$
position and $0$ otherwise.
\end{proof}

\begin{rem}
For $n \geq 2$ and $R=M_m(k)$, from Corollary \ref{cor:gtnrR} we
have
$$
\fg_{2n,\rho}(M_m(k)) =\sum_{i, j} f_{ij}(M_m(k)) \oplus \sum_{i
\leq j} g_{ij}(M_m(k)) \oplus \sum_{i \leq j} h_{ij}(M_m(k))
$$
and
$$
\dim \fg_{2n,\rho}(M_m(k))=2(mn)^2-\rho m n=\dim \fg_{2nm,\rho}(k).
$$
\end{rem}

\begin{exa}\label{eg:gtnrSplusSop}
Let $S$ be a unital associative $\C$-algebra and $R = S \oplus
S^{op}$ with involution $\overline{(a, b)} = (b, a)$ for $a,b \in
S$. Then $\fg_{2n,\rho}(R)$ is isomorphic to $\sl_{2n}(S)$ for all $n
\geq 2$. In particular, if $S = M_m(\C)$, where $m \geq 1$, then
$\fg_{2n,\rho}(M_m(\C) \oplus M_m(\C)^{op}) \cong \sl_{2nm}(\C)$.
\end{exa}
\begin{proof}
Let $\varphi$ : $\fg_{2n,\rho}(R) \rightarrow \sl_{2n}(S)$ be defined
by
$$
f_{ij}(a,0) \mapsto
\frac{1}{2}\Bigl(e_{2i-1,2j-1}(a)+e_{2i,2j}(a)+\sqrt{-1}(e_{2i-1,2j}(a)
-e_{2i,2j-1}(a))\Bigr),
$$
$$
f_{ji}(0,a) \mapsto
\frac{1}{2}\Bigl(-e_{2i-1,2j-1}(a)-e_{2i,2j}(a)+\sqrt{-1}(e_{2i-1,2j}(a)
-e_{2i,2j-1}(a))\Bigr)
$$
for all $1 \leq i \neq j \leq n$ and
$$
h_{ij}(a,0) \mapsto \frac{1}{2}\Bigl(e_{2i-1,2j-1}(a)-e_{2i,2j}(a)+
\sqrt{-1} (e_{2i-1,2j}(a) +e_{2i,2j-1}(a))\Bigr),
$$
$$
g_{ij}(a,0) \mapsto \frac{1}{2}\Bigl(e_{2i-1,2j-1}(a)-e_{2i,2j}(a)-
\sqrt{-1} (e_{2i-1,2j}(a) +e_{2i,2j-1}(a))\Bigr)
$$
for all $1 \leq i, j \leq n$.

In the reverse direction, we can define $\psi$ : $\sl_{2n}(S)
\rightarrow \fg_{2n,\rho}(R)$ by
$$
e_{2i-1,2j-1}(a) \mapsto \frac{1}{2}(f_{ij}(a,0)-f_{ji}(0,a)+
h_{ij}(a,0)+g_{ij}(a,0))
$$
$$
e_{2i,2j}(a) \mapsto \frac{1}{2}(f_{ij}(a,0)-f_{ji}(0,a)-
h_{ij}(a,0)-g_{ij}(a,0))
$$
$$
e_{2i-1,2j-1}(a) \mapsto \frac{1}{2\sqrt{-1}}
(f_{ij}(a,0)-f_{ji}(0,a)+ h_{ij}(a,0)-g_{ij}(a,0))
$$
and
$$
e_{2i-1,2j-1}(a) \mapsto \frac{1}{2\sqrt{-1}}
(-f_{ij}(a,0)-f_{ji}(0,a)+ h_{ij}(a,0)-g_{ij}(a,0)).
$$
We get that $\varphi \circ \psi =\id_{\sl_{2n}(S)}$ and $\psi \circ
\varphi = \id_{\fg_{2n,\rho}(R)}$. Therefore $\varphi$ is an
isomorphism of Lie algebras.
\end{proof}

\begin{rem}
If $R$ is an associative algebra over $\C$, then for $n \geq 2$ we
have
$$
\fg_{2n,1}(R) \cong \eu_{2n}(R,\bar{\;\;\;}),
$$
where $\eu_{2n}(R,\bar{\;\;\;})$ is the elementary unitary Lie algebra
studied by Zheng, Chang and Gao in \cite{ZCG}.
Moreover, finite-dimensional irreducible representations of
the elementary unitary Lie algebras $\eu_{2n}(\C_q,\bar{\;\;\;})$ have been studied in \cite{CGZ}.
\end{rem}
\begin{proof}
The following map is an isomorphism between the two algebras:
\begin{equation*}
\begin{split}
\fij(a) = &\frac{1}{2} \Big( \xi_{2i-1,2j-1}(a) + \xi_{2i,2j}(a) + \sqrt{-1}
            \big( \xi_{2i-1,2j}(a) - \xi_{2i,2j-1}(a) \big)   \Big) \\
\gij(a) = &\frac{1}{2} \Big( \xi_{2i-1,2j-1}(a) - \xi_{2i,2j}(a) - \sqrt{-1}
            \big( \xi_{2i-1,2j}(a) + \xi_{2i,2j-1}(a) \big)   \Big) \\
\hij(a) = &\frac{1}{2} \Big( \xi_{2i-1,2j-1}(a) - \xi_{2i,2j}(a) + \sqrt{-1}
            \big( \xi_{2i-1,2j}(a) + \xi_{2i,2j-1}(a) \big)   \Big),
\end{split}
\end{equation*}
where
\begin{equation*}
\xi_{ij}(a) = - \xi_{ji}(\ovla) = e_{ij}(a) - e_{ji}(\ovla).  \qedhere
\end{equation*}
\end{proof}

Given an involutive ideal $J$ of $R$, that is, an ideal
closed under involution, and $n \geq 2$, consider the three
subspaces of $\fg_{2n,\rho}(R)$ defined as follows:
$$
\widetilde{\fg_{2n,\rho}}(J) =\left((\sum_{i=1}^nf_{ii})((R_+ +
[R,R])\cap \hat{J} \cap \widehat{J_+})\right) \oplus
\sum\limits_{i=2}^n(f_{ii}-f_{11})(J) \oplus \Delta(J),
$$
$$
\overline{\fg_{2n,\rho}}(J) =f_{11}((R_+ + [R,R])\cap J) \oplus
\sum\limits_{i=2}^n(f_{ii}-f_{11})(J) \oplus \Delta(J), \text{ and }
$$
$$
\underline{\fg_{2n,\rho}}(J) =f_{11}(J_+ + [J,R]) \oplus
\sum\limits_{i=2}^n(f_{ii}-f_{11})(J) \oplus \Delta(J),
$$
where
$$
\Delta(J)= \sum\limits_{i \neq j} f_{ij}(J) \oplus \sum\limits_{i
\leq j} \left(g_{ij}(J) \oplus h_{ij}(J)\right),
$$
$\hat{J}$ is the ideal $\{a \in R \,|\, [a,R] \subset J\}$ of Lie algebra $R$,
and $\widehat{J_+}=\{a \in R\,|\, a+\bar{a} \in J\}$.

\begin{rem}\label{rem:ovl=undrl}
If $J$ is an involutive ideal of $R$ then
$\underline{\fg_{2n,\rho}}(J) \subset \overline{\fg_{2n,\rho}}(J)
\subset \widetilde{\fg_{2n,\rho}}(J) $.
Moreover, if
$$J_+ \oplus ([R,R]\cap J_-)=(R_+ + [R,R])\cap J=J_+ + [J,R],
$$
that is, $[R,R]\cap J_- \subset [J,R]$, then
$$
\underline{\fg_{2n,\rho}}(J) = \overline{\fg_{2n,\rho}}(J).
$$
\end{rem}

\begin{lem}
For any involutive ideal $J$ of $R$,
$\widetilde{\fg_{2n,\rho}}(J)$, $\overline{\fg_{2n,\rho}}(J)$ and
$\underline{\fg_{2n,\rho}}(J)$ are all ideals of $\fg_{2n,\rho}(R)$.
\qed
\end{lem}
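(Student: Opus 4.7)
The plan is to show closure under the adjoint action of $\fg_{2n,\rho}(R)$ on each of the three candidate subspaces by bracketing with the generators $f_{ij}(r)$, $g_{ij}(r)$, $h_{ij}(r)$ ($r \in R$) of $\fg_{2n,\rho}(R)$, using Proposition~\ref{propn:basic brackets} throughout. The organizing observation is that all three subspaces share the common ``non-central'' piece $\sum_{i=2}^n (f_{ii}-f_{11})(J) \oplus \Delta(J)$ and differ only in the permitted $f_{11}$- (or $\sum_i f_{ii}$-) ``trace'' part. So I would first show that brackets of the common piece with any generator land in the common piece plus a controlled trace remainder, and then verify that these remainders sit inside the correct trace subspace in each of the three cases.

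For the common piece, every off-diagonal output of Proposition~\ref{propn:basic brackets} has an argument of the form $rb, br, r\bar b$ or $\bar b r$ with $b \in J$ and $r \in R$; since $J$ is a two-sided involutive ideal, these remain in $\Delta(J)$. Diagonal contributions only come from $[f, f]$ and $[g,h]$ brackets such as $[f_{ij}(r), f_{ji}(b)] = f_{ii}(rb) - f_{jj}(br)$ and $[g_{ij}(a), h_{ij}(b)] = -\rho f_{ii}(a\bar b) - \rho f_{jj}(\bar a b)$ (with $b \in J$). Splitting $f_{ii}(c) = (f_{ii}-f_{11})(c) + f_{11}(c)$ absorbs the $(f_{ii}-f_{11})$-pieces into the common part and leaves an $f_{11}$-remainder equal to $f_{11}([r,b])$ or $\pm f_{11}(a\bar b + \bar a b)$. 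The elementary identities
\begin{equation*}
[r,b] \in [R,J] = [J,R], \qquad a\bar b + \bar a b = (a\bar b + b\bar a) + [\bar a, b],
\end{equation*}
with $a\bar b + b\bar a \in J_+$ (and a symmetric identity $b_0 r + \bar b_0 \bar r = (r b_0 + \bar b_0 \bar r) + [b_0, r]$ with $r b_0 + \bar b_0 \bar r \in J_+$ that handles the other diagonal output of $[g,h]$ when $b_0 \in J$), place all such remainders into $J_+ + [J,R] \subset (R_+ + [R,R]) \cap J \subset \hat J \cap \widehat{J_+}$, which sits in the permitted trace space of all three candidates.

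The only case genuinely specific to $\widetilde{\fg_{2n,\rho}}(J)$ is closure of its trace part $(\sum_i f_{ii})(c)$ with $c \in (R_+ + [R,R]) \cap \hat J \cap \widehat{J_+}$. A direct application of Proposition~\ref{propn:basic brackets} gives
\begin{equation*}
[(\textstyle\sum_k f_{kk})(c), f_{ij}(r)] = f_{ij}([c,r]), \qquad [(\textstyle\sum_k f_{kk})(c), g_{ij}(r)] = g_{ij}(cr + r\bar c),
\end{equation*}
together with the analogous formula for $h_{ij}$. These remain in $\Delta(J)$ precisely because $c \in \hat J$ forces $[c,r] \in J$, while $c \in \hat J \cap \widehat{J_+}$ allows the rewriting $cr + r\bar c = [c,r] + r(c+\bar c) \in J$; in other words, the definition of $\widetilde{\fg_{2n,\rho}}(J)$ is engineered for exactly this closure. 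The trace parts of $\overline{\fg_{2n,\rho}}(J)$ and $\underline{\fg_{2n,\rho}}(J)$ are easier since their $f_{11}$-coefficient already lies in $J$.

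The main obstacle is the systematic bookkeeping of how each bracket decomposes into an off-diagonal output in $\Delta(J)$, an $(f_{ii}-f_{11})$-output, and a residual $f_{11}$-trace output, together with the conversion identity
\begin{equation*}
f_{11}(c) = \tfrac{1}{n}\Bigl((\textstyle\sum_i f_{ii})(c) - \textstyle\sum_{i\ge 2}(f_{ii}-f_{11})(c)\Bigr)
\end{equation*}
(valid in characteristic zero) needed to present an $f_{11}$-remainder as an element of the tilde subspace. Once this bookkeeping is carried out and the trace identities above are in place, each of the three subspaces is closed under $\mathrm{ad}\,\fg_{2n,\rho}(R)$, hence is an ideal.
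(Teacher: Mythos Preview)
Your proposal is correct. The paper itself omits the proof entirely (the lemma is stated and immediately marked \qed), treating it as a routine verification; what you have written is precisely such a verification, carried out carefully via Proposition~\ref{propn:basic brackets} together with the chain $J_+ + [J,R] \subset (R_+ + [R,R])\cap J \subset (R_+ + [R,R])\cap \hat J \cap \widehat{J_+}$, so there is nothing to compare against.
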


We study the quotient $\fg_{2n,\rho}(R)/\overline{\fg_{2n,\rho}}(J)$
in the following theorem:

\begin{pro}\label{thm:gtnrR/ovlgtnrJ cong gtnr(R/J)}
If $J$ is an involutive ideal of $R$, then the involution
$\,\bar{\;}\,$ on $R$ induces an involution on the quotient
$R/J$ in a natural way, which we again denote $\,\bar{\;}\,$.
For $n \geq 2$, we have
$\fg_{2n,\rho}(R)/\overline{\fg_{2n,\rho}}(J) \cong
\fg_{2n,\rho}(R/J)$.
\end{pro}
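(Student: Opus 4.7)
The plan is to build the natural surjective Lie algebra homomorphism
$\pi \colon \fg_{2n,\rho}(R) \to \fg_{2n,\rho}(R/J)$
and identify its kernel with $\overline{\fg_{2n,\rho}}(J)$; the conclusion then follows from the first isomorphism theorem. Since $J$ is involutive, the rule $\overline{a+J} := \bar{a} + J$ is a well-defined anti-involution on $R/J$, so the target Lie algebra makes sense. The entrywise quotient map $\widetilde{\pi} \colon \gl_{2n}(R) \to \gl_{2n}(R/J)$ is a Lie algebra morphism, and inspection of the defining formulas for $\fij(a)$, $\gij(a)$, $\hij(a)$ shows that $\widetilde{\pi}$ sends each generator of $\fg_{2n,\rho}(R)$ to its counterpart in $\fg_{2n,\rho}(R/J)$. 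Hence $\pi := \widetilde{\pi}|_{\fg_{2n,\rho}(R)}$ is surjective.

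To compute $\ker \pi$ I would decompose $\fg_{2n,\rho}(R)$ via Corollary \ref{cor:gtnrR} and examine $\pi$ on each summand. For the ``off-diagonal'' pieces $f_{ij}(R)$ with $i\neq j$, $(f_{ii}-f_{11})(R)$ with $i\geq 2$, and $g_{ij}(R),h_{ij}(R)$ with $i<j$, the maps $a\mapsto f_{ij}(a)$, etc., are injective on $R$ because they store $a$ and $\bar{a}$ in distinct matrix positions. Consequently the kernel of $\pi$ restricted to such a summand is obtained by simply replacing $R$ with $J$, and these pieces match the corresponding parts of $\overline{\fg_{2n,\rho}}(J)$ on the nose.

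The delicate summands are $f_{11}(R_+ + [R,R])$, $g_{ii}(R)$, and $h_{ii}(R)$, because here the argument $a$ is not recoverable from a single matrix entry. For the $f_{11}$ piece, the key observation is that the image of $R_+$ in $R/J$ equals $(R/J)_+$: if $\bar{a}-a\in J$, then $\tfrac{1}{2}(a+\bar{a})\in R_+$ is congruent to $a$ modulo $J$ (using characteristic zero). Combined with the evident surjection $[R,R]\twoheadrightarrow [R/J,R/J]$ and the injectivity of $a\mapsto f_{11}(a)$, this yields
$$
\ker \pi \cap f_{11}(R_+ + [R,R]) = f_{11}\bigl((R_+ + [R,R])\cap J\bigr).
$$
An analogous averaging argument handles $g_{ii}$: if $\bar{a}-\rho a\in J$, one sets $b := \tfrac{1}{2}(a-\rho\bar{a})$, which lies in $J$ (apply the involution to the hypothesis and use $\bar{J}=J$) and satisfies $\overline{a-b}=\rho(a-b)$, so $g_{ii}(a-b)=0$ and $g_{ii}(a)=g_{ii}(b)\in g_{ii}(J)$. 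The case of $h_{ii}$ is identical.

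The principal obstacle lies precisely in these diagonal components, where one must exploit the characteristic-zero averaging $a = \tfrac{1}{2}(a+\rho\bar{a}) + \tfrac{1}{2}(a-\rho\bar{a})$ to split off a ``symmetric'' summand from a ``$J$-summand''. Once the componentwise kernels are identified, summing yields $\ker \pi = \overline{\fg_{2n,\rho}}(J)$, finishing the proof.
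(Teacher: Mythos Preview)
Your proposal is correct and follows essentially the same route as the paper: define the entrywise reduction $\widetilde{\pi}\colon \gl_{2n}(R)\to\gl_{2n}(R/J)$, restrict to $\fg_{2n,\rho}(R)$, and identify the kernel with $\overline{\fg_{2n,\rho}}(J)$. The paper is terser, simply observing that $\ker\pi=\gl_{2n}(J)\cap\fg_{2n,\rho}(R)$ and invoking Corollary~\ref{cor:gtnrR}; your explicit averaging arguments for the $g_{ii}$ and $h_{ii}$ pieces are exactly what is needed to justify that step (and your discussion of $(R/J)_+$ for the $f_{11}$ piece, while harmless, is not actually required for the kernel computation since $a\mapsto f_{11}(a)$ is already injective).
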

\begin{proof}
Let $\pi : \gtnrR \to \gtnr(R/J)$ be given by $(a_{ij}) \mapsto (a_{ij} + J) $.
$\pi$ is a surjective homomorphism.  Furthermore, $(a_{ij} + J) = (0+J)$
if and only if $a_{ij} \in J$ for all $i,j$.
That is, $\ker \pi = \gl_{2n}(J) \cap \gtnrR$.
By Corollary \ref{cor:gtnrR} and the definition of $\ovlgtnrJ$, this means that $\ker \pi = \ovlgtnrJ$.
\end{proof}

Conversely, given an ideal $\J$ in $\fg_{2n,\rho}(R)$,
the following theorem gives us an involutive ideal $J$ of $R$
corresponding to $\J$.

\begin{thm}\label{thm:ideal calJ gives ideal J}
Let $n \geq 3$ and $\J$ be an ideal of $\fg_{2n,\rho}(R)$.
\begin{enumerate}
\item The subsets
\begin{equation*}
\begin{split}
J_{i,j} = & \{a \in R \,|\, f_{ij}(a) \in \J\}, \quad  1 \leq i \neq j \leq n, \\
J_{i,n+j}= &\{a \in R \,|\, g_{ij}(a) \in \J\}, \quad  1 \leq i \neq j \leq n,  \\
J_{n+i,j}= &\{a \in R \,|\, h_{ij}(a) \in \J\}, \quad  1 \leq i \neq j \leq n.
\end{split}
\end{equation*}
are all involutive ideals of R and independent of the choice of $(i,j)$.
We denote them $J_1$, $J_2$ and $J_3$, respectively. Moreover, we have
$$
J=J_1=J_2=J_3.
$$
\item Let
\begin{equation*}
I_{ij}=\{a \in R \,|\, (f_{ii}-f_{jj})(a) \in \J\}, \quad  1 \leq i
\neq j \leq n.
\end{equation*}
Then
$$
J=I_{ij} \text{ for all } 1 \leq i \neq j \leq n.
$$
\item Let
$$
I_{i}=\{a \in R \,|\, f_{ii}(a) \in \J\}, \quad  1 \leq i \leq n.
$$
Then they are independent of $i$ and we denote the common set by
$I$. Furthermore,
$$
J_+ +[J,R] \  \subset \ I \ \subset \ (R_+ +[R,R]) \cap J.
$$
\item Let
$$
J_{i,n+i}=\{a \in R \,|\, g_{ii}(a) \in \J\}, J_{n+i,i}=\{a \in R
\,|\, h_{ii}(a) \in \J\}, \;\;  1 \leq i \leq n.
$$
Then
$$
\overline{J_{i,n+i}}=J_{i,n+i}, \qquad
\overline{J_{n+i,i}}=J_{n+i,i},
$$
and
$$
J \ \subset \ J_{i,n+i}, \ J_{n+i,i} \ \subset \ J_{-\rho} \oplus
R_\rho.
$$
Furthermore,
$$
g_{ii}(J_{i,n+i})=g_{ii}(J), \text{ and }
h_{ii}(J_{n+i,i})=h_{ii}(J) \text{ for all } 1 \leq i \leq n.
$$
\item $\J$ has the following vector space
decomposition
\begin{equation}\label{eqn:calJ decomp}
\J=\J_0 \oplus \sum_{i \neq j} \J_{\epsilon_i-\epsilon_j} \oplus
\sum_{i<j}(\J_{\epsilon_i+\epsilon_j} \oplus
\J_{-\epsilon_i-\epsilon_j})\oplus \sum_{i} (\J_{2\epsilon_i} \oplus
\J_{-2\epsilon_i})
\end{equation}
where
$$
\J_\alpha = \G_\alpha \cap \J \text{ for all } \alpha \in \Delta_C
$$
and
\begin{equation}\label{eqn:calJzero decomp}
\J_0=\left(((\sum_{i=1}^nf_{ii})(R_+ + [R,R])) \cap \J\right) \oplus
\left( \left(\sum\limits_{i=2}^n(f_{ii}-f_{11})(R) \right) \cap \J
\right).
\end{equation}
Moreover,
\begin{equation}\label{eqn:tildegtnrJ contains calJ contains undrlgtnrJ}
\widetilde{\fg_{2n,\rho}}(J) \ \supset \  \J \ \supset \
\underline{\fg_{2n,\rho}}(J).
\end{equation}
\end{enumerate}
\end{thm}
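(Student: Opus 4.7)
The strategy is to hang everything off the weight decomposition of $\G_\rho = \fg_{2n,\rho}(R)$ with respect to the Cartan subalgebra $\mathcal{H}$. Since $\mathcal{H} \subset \G_\rho$ and $\J$ is an ideal, $\J$ is $\mathcal{H}$-stable and automatically inherits the grading $\J = \bigoplus_\alpha \J_\alpha$ with $\J_\alpha = \J \cap \G_\alpha$; this supplies (\ref{eqn:calJ decomp}) for free and, for $\alpha \neq 0$, lets me identify $\J_\alpha$ with a single ``coefficient-ideal'' in $R$ via Proposition \ref{propn:basic brackets}.

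For (1), the two-sided ideal structure of $J_{ij}$ comes from $[\fii(b),\fij(a)] = -\fij(ba)$ and $[\fij(a),\fjj(b)] = \fij(ab)$; independence of the pair $(i,j)$ follows from $[\fij(a),\fjk(1)] = \fik(a)$ and $[\fki(1),\fij(a)] = \fkj(a)$, where $n \geq 3$ is used to supply a third index $k$. The three families $J_1, J_2, J_3$ are identified via cross-brackets such as $[\fij(a),\gjk(1)] = \gik(a)$ and $[\gij(a),\hjk(1)] = \fik(a)$ (distinct indices), and involutive closure $\overline{J} = J$ then follows from $\gij(a) = -\rho\gji(\bar a)$. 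Part (2) is the two-line argument $[\fij(1),\fji(a)] = (\fii-\fjj)(a)$ (giving $J \subset I_{ij}$) together with $[(\fii-\fjj)(a),\fik(1)] = \fik(a)$ for $k \neq i,j$ (giving $I_{ij} \subset J$). For (4), involutive closure of $\Jinpi$ follows from $\gii(a) = -\rho\gii(\bar a)$, the inclusion $J \subset \Jinpi$ from $\gii(a) = [\fij(a),\gji(1)]$, and the upper bound from $[\gii(a),\hij(1)] = \fij(a - \rho\bar a)$, which forces $a - \rho\bar a \in J \cap R_{-\rho} = J_{-\rho}$; the equality $\gii(\Jinpi) = \gii(J)$ holds because $\gii(a) = \gii(\half(a - \rho\bar a))$ with $\half(a - \rho\bar a) \in J$.

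For (3) the extra ingredient is a trace observation: $\fii(a) \in \J \subset \G_\rho$ forces $\tr\fii(a) = a - \bar a \in [R,R]$, which by (\ref{eqn:[R,R] in R-}) is equivalent to $a \in R_+ + [R,R]$; combined with $[\fii(a),\fij(1)] = \fij(a) \in \J$ (so $a \in J$) this gives $I \subset (R_+ + [R,R]) \cap J$. For the reverse inclusion I would split into the $J_+$ and $[J,R]$ cases. If $a \in J_+$, then $(\fii-\fjj)(a) \in \J$ by (2) and $[\gij(a),\hji(1)] = \fii(a) + \fjj(\bar a) = \fii(a) + \fjj(a) \in \J$, giving $2\fii(a) \in \J$; for $a = [b,c]$ with $b \in J$, adding $[\fij(b),\fji(c)] + [\fji(b),\fij(c)] = (\fii+\fjj)([b,c])$ to $(\fii-\fjj)([b,c])$ again isolates $\fii([b,c])$. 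The $i$-independence of $I_i$ drops out, since $\fii(a) \in \J$ forces $a \in J$ and hence $\fjj(a) = \fii(a) - (\fii-\fjj)(a) \in \J$.

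The main obstacle is part (5): the nonzero-weight pieces of $\J$ are already pinned down by (1) and (4), but the refinement of $\J_0$ in (\ref{eqn:calJzero decomp}) is delicate because an ideal's intersection with a direct sum need not respect the summands. Writing a generic $X \in \J_0$ as $X = (\sum_i \fii)(r) + \sum_{i \geq 2}(\fii - \foo)(s_i)$, I would bracket $X$ with $\fkl(1)$ for $k \neq l \geq 2$, which kills the first summand and produces $\fkl(s_k - s_l) \in \J$ (so $s_k - s_l \in J$), and then with $f_{k1}(1)$ for $k \geq 2$, which produces $f_{k1}(s_k + T) \in \J$ with $T = \sum_{j \geq 2} s_j$ (so $s_k + T \in J$); a short linear combination dividing by $n$ (where characteristic zero enters) forces each $s_k \in J$, so both $X_A = (\sum_i \fii)(r)$ and $X_B = \sum_{i \geq 2}(\fii - \foo)(s_i)$ lie in $\J$ individually. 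The inclusion $\undrlgtnrJ \subset \J$ then follows directly from (1)--(3); for $\J \subset \tildegtnrJ$ the remaining point is that if $(\sum_i \fii)(r) \in \J$, then $[(\sum_i \fii)(r),\fot(b)] = \fot([r,b])$ gives $[r,R] \subset J$ (i.e.\ $r \in \hat J$) and $[(\sum_i \fii)(r),\got(1)] = \got(r + \bar r)$ gives $r + \bar r \in J$ (i.e.\ $r \in \widehat{J_+}$).
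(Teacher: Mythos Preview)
Your proposal is correct and follows essentially the same route as the paper: decompose $\J$ along the $\mathcal{H}$-grading, then push coefficients around with the brackets of Proposition~\ref{propn:basic brackets}. One slip to flag in part~(1): you derive the two-sided ideal property of $J_{ij}$ from $[\fii(b),\fij(a)]$ and $[\fij(a),\fjj(b)]$, but $\fii(b)$ and $\fjj(b)$ need not lie in $\gtnr(R)$ for arbitrary $b\in R$ (by Corollary~\ref{cor:gtnrR} the diagonal piece only carries $R_+ + [R,R]$). The fix is the one you already use for independence: choose a third index $k$ and use $[\fij(a),\fjk(r)]=\fik(ar)$ and $[\fki(r),\fij(a)]=\fkj(ra)$, which is exactly how the paper argues. (Also, your sign in $[\fii(b),\fij(a)]$ should be $+\fij(ba)$.) Everything else---including your averaging argument for (\ref{eqn:calJzero decomp}) and the $\hat J$, $\widehat{J_+}$ checks---matches the paper's proof, with only cosmetic differences in which brackets are chosen.
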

\begin{proof}
(1).  Fix $ 1 \leq i \neq j \leq n$ and $ 1 \leq k \neq l \leq n$.
Take $a \in \Jij$.
\begin{enumerate}
\item[(i)]
Because $\fij(a) \in \calJ$, then
\begin{equation*}
\left[ \left[ \fji(1), \ \left[ \fij(a), \fjp(1) \right] \right], \ \fpi(1) \right] = \fji(a) \in \calJ.
\end{equation*}
So $a \in \Jji$ and $\Jij \subset \Jji$.
\item[(ii)]
Suppose $l \notin \{i,j\}$.  Then we have
$[ \fij(a), \fjl(1)] = \fil(a) \in \calJ$.
So $a \in \Jil$ and $\Jij \subset \Jil$.
\end{enumerate}

The following table illustrates how $\Jij \subset \Jkl$ across the other
various possibilities for $k$ and $l$. %\vskip0.15in
\begin{center}
\begin{tabular}{|c|c|l|}
\hline
\multicolumn{3}{|c|}{Table 1} \\
\hline
$k$ & $l$ & \\
\hline
$j$ & $\notin\{i,j\}$ & $\Jij \subset \Jji \subset \Jjl$ by (i) and (ii), respectively \\
$\notin\{i,j\}$ & $i$ & $\Jij \subset \Jik \subset \Jki$ by (ii) and (i), respectively \\
$\notin\{i,j\}$ & $j$ & $\Jij \subset \Jji \subset \Jjk \subset \Jkj$ by (i), (ii) and (i), respectively \\
$\notin\{i,j\}$ & $\notin\{i,j\}$ & $\Jij \subset \Jik \subset \Jki \subset \Jkl$ by (ii), (i) and (ii), respectively \\
\hline
\end{tabular}
\end{center}
%\vskip0.15in
Thus, $\Jij = \Jkl$ for all $1 \leq i \neq j \leq n$
and $1 \leq k \neq l \leq n$.  Let $J_1$ denote this common set
$\Jij$.

Similarly, we have $\Jinpj = \Jknpl$ and $\Jnpij = \Jnpkl$ for all $1
\leq i \neq j \leq n$, $1 \leq k \neq l \leq n$ respectively.  Let $\Jtwo$
$\Jthree$ denote these common set $\Jinpj$ and $\Jnpij$ respectively.

Note that, for all $a \in R$, we have
\begin{equation*}
[f_{12}(a), g_{23}(1)] = g_{13}(a), \; \big[ [g_{13}(a), h_{32}(1)], \ h_{31}(-1)\big] =
h_{32}(a)
\end{equation*}
and
\begin{equation*}
[ g_{13}(1), h_{32}(a) ] = f_{12}(a).
\end{equation*}
Hence $\Jone \subset \Jtwo \subset \Jthree \subset \Jone$, i.e.,
$\Jone = \Jtwo = \Jthree$.  We denote this common set by $J$.

$J$ is, in fact, an involutive ideal of $R$.  Indeed, let $a,b \in
J$, $\alpha \in k$, and $r \in R$. Then
$f_{12}(a) + f_{12}(b) = f_{12}(a+b) \in \calJ$ and $\alpha
f_{12}(a) = f_{12}(\alpha a) \in \calJ$.  That is, $a+b$ and $\alpha
a$ are members of $J$.  Since $[f_{12}(a), f_{23}(r)] = f_{13}(ar)$ and
$[f_{31}(r), f_{12}(a)] = f_{32}(ra)$.  That is, $ar \in J$
and $ra \in J$.  Thus $J$ is an ideal of
$R$.  Furthermore, we have that $[f_{12}(a), g_{32}(1)] = g_{31}(\ovla) \in \calJ$.
That is, $\ovla \in J$.  So $J$ is closed under involution.

\vskip0.15in

(2) Fix $1 \leq i \neq j \leq n$.  Take $a \in I_{ij}$ and choose
$p \notin \{i,j\}$, then
\begin{equation*}
[ (\fii - \fjj)(a), \gip(1) ] = [ \fii(a), \gip(1)] - [\fjj(a), \gip(1)]
= \gip(a) - 0 = \gip(a) \in \calJ,
\end{equation*}
which implies that $ a \in J_{i,n+p} = J$.  So $I_{ij} \subset J$.

For the reverse inclusion, take $a \in J$, then
\begin{equation*}
[\fij(a), \fji(1)] = \fii(a) - \fjj(a) \in \calJ,
\end{equation*}
which implies that $a \in I_{ij}$.  That is, $J \subset I_{ij}$.

\vskip0.15in

(3) Let $1 \leq i \leq n$ and $a \in I_i$.  Then $\fii(a) \in
\calJ$.  Take any $1 \leq j \leq n$ with $j \neq i$, then
\begin{equation*}
[ \fji(1), \ [ \fii(a), \fij(1) ] ] = [ \fji(1), \fij(a) ] = \fjj(a) - \fii(a) \in \calJ.
\end{equation*}
But then $\fjj(a) = \left( \fjj(a) - \fii(a) \right) + \fii(a) \in \calJ$,
implying that $a \in I_j$.
That is, $I_i = I_j$.  We denote this common set by $I$.

Next, let us show that $\Jplus + \JR \subset I$.  Take $a \in
\Jplus$.  Then $\ovla = a$ and
\begin{equation*}
[ \gij(\ovla), \hji\left(\frac{1}{2}\right) ] + [ \fij(a), \fji\left(\frac{1}{2}\right)]  = \fii(a).
\end{equation*}
Hence $a \in I_i = I$ and $\Jplus \subset I$.  If $a \in J$ and $b
\in R$, then
\begin{equation*}
[ \fij(a), \fji(b)] - [\fij(ba), \fji(1)] = \fii(ab) - \fjj(ba) - \fii(ba) + \fjj(ba) = \fii([a,b]).
\end{equation*}
So $[a,b] \in I_i = I$ and $\JR \subset I$.  Thus $\Jplus + \JR \subset I$.

Next, we show that $I \subset \RplusRRandJ$.  Take $a \in I$.  Since
$f_{11}(a) \in \calJ \subset \gtnrR$.  But then $a$ must be a
member of $\RplusRR$.  Hence $I \subset \RplusRR$.  Moreover,
$f_{11}(a) \in \calJ$ implies that
\begin{equation*}
[ f_{11}(a), g_{12}(1)] = g_{12}(a) \in \calJ.
\end{equation*}
That is, $a \in J_{1, n+2} = J$.  So $I \subset J$.  Thus, $I
\subset \RplusRRandJ$.

\vskip0.15in

(4) Fix $1 \leq i \leq n$. Since $\gii(a) = -\rho\gii(\ovla)$,
thus $\Jinpi=\overline{\ovlJinpi} \subset \ovlJinpi \subset \Jinpi$.
That is $\ovlJinpi = \Jinpi$.
The proof that $\ovlJnpii = \Jnpii$ is similar.

Take $a \in J$. Choose $1 \leq j \leq n$ such that $j \neq i$.
Since
\begin{equation*}
[\fij(a), \gji(1)] = \gii(a) \in \calJ.
\end{equation*}
So $a \in \Jinpi$ and $J \subset \Jinpi$.

Next, we show that $\Jinpi \subset \Jmrho \oplus \Rrho $.
%We will do this in three steps: first, we show that $\Jinpimrho \subset J$;
% second, we show that $\Jinpimrho = \Jmrho$; and third, we show how the desired conclusion follows.
Take $a \in \Jinpimrho$.  Then $\gii(a) \in \calJ$ and $\ovla =
-\rho a$.  Choose an index $j$ between $1$ and $n$ such that $j \neq
i$. Then
\begin{align*}
[\fji(1), \gii(a)] = \gji(a) + \gij(a)  = \gij(-\rho \ovla) + \gij(a)
= \gij( \rho^2 a + a) = \gij(2a).
\end{align*}
So $2a$ and, hence, $a \in \Jinpj = J$.  That is, $\Jinpimrho
\subset J = \Jmrho \oplus \Jrho \subset \Jinpi$, it follows $\Jinpimrho = \Jmrho$.
But then $\Jinpi = \Jinpimrho \oplus \Jinpirho = \Jmrho \oplus
\Jinpirho \subset \Jmrho \oplus \Rrho$.

The proof that $J \subset \Jnpii \subset \Jmrho \oplus \Rrho $ is similar.

Next, we show that $\gii(\Jinpi) \subset \gii(J)$.
Take $\gii(a) \in \gii(\Jinpi)$.  So $a \in \Jinpi$.  Since $\Jinpi
\subset \Jmrho \oplus \Rrho$, we may write $a = \amrho + \arho$ for
some $\amrho \in \Jmrho$ and $\arho \in \Rrho$.  But then
\begin{equation*}
\begin{split}
\gii(a)  = & \einpi\left( (\amrho + \arho) - \rho( \overline{\amrho + \arho } ) \right) \\
= & \einpi( \amrho - \rho \ovlamrho ) = \gii(2\amrho) \in \gii(\Jmrho) \subset \gii(J).
\end{split}
\end{equation*}
Thus $\gii(\Jinpi) = \gii(J)$.
The proof that $\hii(\Jnpii) = \hii(J)$ is similar.

\vskip0.15in

(5). Using Proposition 1.5 in Kac [K], it is enough to show
 (\ref{eqn:calJzero decomp}). Assume
$u=\sum\limits_{i=1}^n f_{ii}(a_i) \in \J$. Then
$$
[u,f_{ij}(1)]=f_{ij}(a_i-a_j) \in \J \text{ for } 1 \leq i < j \leq
n.
$$
By part (1), we have $a_i-a_j \in J$. Set $a
=\frac{\sum\limits_{i=1}^n a_i}{n}$.  Then
$a_i-a=\frac{\sum\limits_{j=1}^n(a_i-a_j)}{n} \in J$.
Now, rewrite $u$ as follows
$$
u=(\sum_{i=1}^n f_{ii})(a)+\sum_{i=1}^nf_{ii}(a_i-a)=(\sum_{i=1}^n
f_{ii})(a)+\sum_{i=2}^n(f_{ii}-f_{11})(a_i-a).
$$
Hence $(\sum\limits_{i=1}^n f_{ii})(a) \in \J$, and
(\ref{eqn:calJzero decomp}) holds.

From parts (1)-(4), we first have $\J \supset\underline{\fg_{2n,\rho}}(J)$.
In order to prove (\ref{eqn:tildegtnrJ contains calJ
contains undrlgtnrJ}), it is enough to show
$$
(\sum_{i=1}^nf_{ii})((R_+ + [R,R])\cap \hat{J} \cap \widehat{J_+})
\supset ((\sum_{i=1}^nf_{ii})(R_+ + [R,R])) \cap \J.
$$
Let $u=(\sum\limits_{i=1}^nf_{ii})(a) \in \J$. For $b \in R$,
we have $[u,f_{12}(b)]=f_{12}([a,b]) \in \J$, which implies
$[a,b] \in J$ for all $b \in R$, i.e., $a \in \hat{J}$. On
the other hand, we have $[u,g_{12}(b)]=g_{12}(a+\bar{a}) \in \J$,
i.e., $a+\bar{a} \in J$ and $a \in \widehat{J_+}$.
\end{proof}

\begin{cor}
Let $R$ be an involutive simple associative algebra $($has no
nontrivial involutive ideal$)$ and $[R,R] \cap Z(R) \cap R_-=0$,
then $\fg_{2n,\rho}(R)$ is a simple Lie algebra for any $n \geq 3$.
\end{cor}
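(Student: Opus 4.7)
The plan is to extract the result almost mechanically from Theorem \ref{thm:ideal calJ gives ideal J}. Let $\calJ$ be a proper ideal of $\fg_{2n,\rho}(R)$ and let $J$ be the involutive ideal of $R$ associated to $\calJ$ as in Theorem \ref{thm:ideal calJ gives ideal J}(1). Since $R$ is involutive simple, the only possibilities are $J=0$ or $J=R$, so I would split the argument accordingly.

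If $J=R$, then by (\ref{eqn:tildegtnrJ contains calJ contains undrlgtnrJ}) we have $\calJ\supset \undrlgtnrJ$ with $J=R$. But when $J=R$, the definition of $\undrlgtnrJ$ gives $J_++[J,R]=R_++[R,R]$, and comparison with the decomposition in Corollary \ref{cor:gtnrR} shows $\underline{\fg_{2n,\rho}}(R)=\fg_{2n,\rho}(R)$. Hence $\calJ=\fg_{2n,\rho}(R)$.

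If $J=0$, then by (\ref{eqn:tildegtnrJ contains calJ contains undrlgtnrJ}) we have $\calJ\subset\widetilde{\fg_{2n,\rho}}(0)$. The right hand side collapses as follows: $\Delta(0)=0$ and $\sum_{i=2}^n(f_{ii}-f_{11})(0)=0$; next, $\hat{0}=\{a\in R\,|\,[a,R]\subset 0\}=Z(R)$ and $\widehat{0_+}=\{a\in R\,|\,a+\bar a\in 0\}=R_-$; finally, using (\ref{eqn:[R,R] in R-}), one computes
\begin{equation*}
(R_++[R,R])\cap R_- = (R_+\oplus(R_-\cap[R,R]))\cap R_- = [R,R]\cap R_-.
\end{equation*}
Therefore the coefficient space is $[R,R]\cap Z(R)\cap R_-$, which is zero by hypothesis. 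Thus $\widetilde{\fg_{2n,\rho}}(0)=0$ and so $\calJ=0$.

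Since $\fg_{2n,\rho}(R)$ is manifestly nonzero (e.g. $f_{12}(1)\neq 0$), these two cases show that the only ideals of $\fg_{2n,\rho}(R)$ are $0$ and $\fg_{2n,\rho}(R)$ itself, proving simplicity. There is no real obstacle here; the whole argument is a direct application of Theorem \ref{thm:ideal calJ gives ideal J} once the sandwich $\undrlgtnrJ\subset\calJ\subset\widetilde{\fg_{2n,\rho}}(J)$ is combined with the two algebraic hypotheses on $R$. The only step that requires any care is unwinding the definition of $\widetilde{\fg_{2n,\rho}}(0)$ and checking that the hypothesis $[R,R]\cap Z(R)\cap R_-=0$ is exactly what is needed to kill the potentially problematic diagonal summand $(\sum_{i=1}^n f_{ii})((R_++[R,R])\cap\hat{J}\cap\widehat{J_+})$.
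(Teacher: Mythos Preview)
Your proof is correct and follows essentially the same approach as the paper's own proof: split on $J=0$ versus $J=R$ using involutive simplicity, then apply the sandwich (\ref{eqn:tildegtnrJ contains calJ contains undrlgtnrJ}) together with the identifications $\hat{0}=Z(R)$ and $\widehat{0_+}=R_-$. You supply more detail than the paper (in particular the computation $(R_++[R,R])\cap R_-=[R,R]\cap R_-$), and there is a harmless slip in calling $\calJ$ ``proper'' at the outset while later concluding $\calJ=\fg_{2n,\rho}(R)$ in the $J=R$ case, but the argument is sound.
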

\begin{proof}
Let $\J$ be an ideal of $\fg_{2n,\rho}(R)$. $J$ is the ideal of $R$
corresponding to $\J$ . Since $R$ is involutive simple, we have $J =
0$ or $R$. If $J = 0$, then $\hat{J} = Z(R)$ and $\widehat{J_+} = R_-$, it
follows from (\ref{eqn:tildegtnrJ contains calJ contains
undrlgtnrJ}) that $\J = 0$. If $J = R$, we also have $\J =
\fg_{2n,\rho}(R)$.
\end{proof}

\begin{rem}
In Example \ref{eg:gtnrM}, since
$$
Z(M_m(k)) \cap [M_m(k),M_m(k)]=kI_m \cap [M_m(k),M_m(k)]=0
$$
and $M_m(k)$ is simple, hence without proof we can claim that $\fg_{2n,\rho}(M_m(k))$ are simple for $n \geq 3$.

In Example \ref{eg:gtnrSplusSop}, since $M_m(\C) \oplus M_m(\C)^{\rm{op}}$
is involutive simple and
$Z(M_m(\C) \oplus M_m(\C)^{\rm{op}}) \cap [M_m(\C)
\oplus M_m(\C)^{\rm{op}},M_m(\C) \oplus M_m(\C)^{\rm{op}}]=0$. Then
we have that $\fg_{2n,\rho}(M_m(\C) \oplus M_m(\C)^{\rm{op}})$ are simple
for $n \geq 3$.
\end{rem}

We end the section by giving more examples:

\begin{exa}\label{eg:gtnrCK}
Let $\K_4=\{1,\tau, \gamma,\tau \gamma\}$ be the Klein four group
and we denote it simply by $\K$. Let $\C \K$ be the group algebra of
$\K$ over $\C$ and the involution given by
\begin{equation}
\bar{1}=1, \; \bar{\tau}=-\tau \text{ and } \bar{\gamma}=-\gamma.
\end{equation}
Then we have
$$
\fg_{2n,\rho}(\C\K) \cong \sl_{2n}(\C) \oplus \sl_{2n}(\C).
$$
\end{exa}
\begin{proof}

Firstly, we have $\overline{\tau\gamma}=\bar{\gamma}\bar{\tau}
=\tau\gamma$ and
$$
(\C\K)_+=\C \oplus \C \tau\gamma, \quad (\C\K)_-=\C\tau \oplus \C
\gamma.
$$
Then all the involutive ideals of $\C\K$ are trivial ideals $0$,
$\C\K$ and
\begin{equation*}
J_1=\C(1+\tau\gamma) \oplus \C(\tau + \gamma), \; J_2=\C(1-\tau\gamma) \oplus \C(\tau - \gamma).
\end{equation*}

It is obvious that
$$
\C\K=J_1 \oplus J_2 \text{ and }\underline{\fg_{2n,\rho}}(J_i) =
\overline{\fg_{2n,\rho}}(J_i)=\fg_{2n,\rho}(J_i) \text{ for } i=1,2,
$$
where we consider $J_1$ and $J_2$ as associative algebras with
identity $\frac{1+\tau\gamma}{2}$ and $\frac{1-\tau\gamma}{2}$ respectively
and the involution induced by the involution on $\C\K$. Moreover, by
Lemma \ref{lemma:gtnrAoplusB equals gtnrAoplusgtnrB} in what follows
we have
$$
\fg_{2n,\rho}(\C\K)=\fg_{2n,\rho}(J_1) \oplus \fg_{2n,\rho}(J_2).
$$

Now by Example \ref{eg:gtnrSplusSop}, we just need show
$$
J_i \cong \C \oplus \C^{\rm{op}}=\C \oplus \C, \text{ for } i=1,2.
$$
In fact, the isomorphisms are given by
\begin{equation*}
\frac{1}{4}\bigl(1 \pm \tau\gamma+(\tau \pm \gamma)\bigr) \mapsto
(1,0) , \quad \frac{1}{4}\bigl(1 \pm \tau\gamma-(\tau \pm
\gamma)\bigr) \mapsto (0,1).  \qedhere
\end{equation*}
\end{proof}

\begin{exa}\label{eg:gtnrtildeM}
Define a new involution $\,\widetilde{\;}\,$ on $M_2(\C)$ given by
$$
\widetilde{A}=\left(
\begin{array}{cc}
0 & 1 \\
-1 & 0 \\
\end{array}
\right) A^t \left(
\begin{array}{cc}
0 & -1 \\
1 & 0 \\
\end{array}
\right).
$$
Now we denote $M_2(\C)$ with this involution by $\m2$. Then we have
$$
\fg_{2n,\rho}(\m2) \cong \fg_{4n,-\rho}(\C) \cong
\fg_{2n,-\rho}(M_2(\C)).
$$
\end{exa}
\begin{proof}
By Example \ref{eg:gtnrM}, we just need prove
$$
\fg_{2n,\rho}(\m2) \cong \fg_{2n,-\rho}(M_2(\C)).
$$

At first, we have $\m2$ is a simple associative algebra, $Z(\m2)=\C
I_2$ and $\widetilde{E_{ab}}=(-1)^{a+b}E_{3-b,3-a}$, therefore we
have
$$
\m2_+=\C I_2, \quad \m2_-=\sl_2(\C)
$$
and $\fg_{2n,\rho}(\m2)$ is a simple Lie algebra. Computing the
dimension of these Lie algebras we have
$$
\dim \fg_{2n,\rho}(\m2) = \dim \fg_{2n,-\rho}(M_2(\C))=8n^2 +2 \rho
n = \dim \fg_{4n,-\rho}(\C).
$$

Now we give an isomorphism $\varphi$ from $\fg_{2n,-\rho}(M_2(\C))$
to $\fg_{2n,\rho}(\m2)$:
$$
f_{ij}(E_{ab}) \mapsto  f_{ij}(E_{ab}) \text{ for all }  1 \leq i
\neq j \leq n
$$
and for all $1 \leq i, j \leq n$
\begin{equation*}
g_{ij}(E_{ab}) \mapsto (-1)^{3-b}g_{ij}(E_{a,3-b}), \;
h_{ij}(E_{ab}) \mapsto (-1)^{3-a}h_{ij}(E_{3-a,b}).  \qedhere
\end{equation*}
\end{proof}

\section{The quantum torus $\C_q$}

In this section, we discuss the properties of the quantum torus
$\C_q[t_1^{\pm 1}, t_2^{\pm 1}]$ as an associative algebra with involution. The main goal of
this section is the involutive ideals of $\C_q$.

The quantum torus $\C_q[t_1^{\pm 1}, t_2^{\pm 1}]$ is the
associative algebra over $\C$ generated by $t_1^{\pm 1}$, $t_2^{\pm
1}$ and subject to the relations $t_1t_1^{-1} = t_1^{-1}t_1 =
t_2t_2^{-1} = t_2^{-1}t_2 = 1$, $t_2t_1 = qt_1t_2$ (see \cite{M}).
We denote it simply by $\C_q$. It is easy to see that $t_1^it_2^j$,
$i, j \in \Z$ form a basis of $\C_q$ and $\C_q$ is a $\Z \times \Z$-graded algebra.
If $q = \pm 1$, then $\C_q$ is
called an elementary quantum torus (cf. \cite{Y2}).
Yoshii in his PhD thesis proved that

\begin{pro}[Proposition 3.2.7 in \cite{Y1}] \label{Yoji}
There exists a graded involution on $\C_q$ if and only if the quantum torus $\C_q$ is elementary.
In this case, $\tonebar = \thetaone \tone$ and $\ttwobar = \thetatwo \ttwo$ with $\theta_i = \pm 1$ for $i=1,2$.
\end{pro}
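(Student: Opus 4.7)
The plan is to use the graded structure to force $\bar{t_i}$ to be a scalar multiple of $t_i$, then derive $q^2=1$ from the defining relation, and finally build the involution explicitly in the converse direction.

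For the forward direction, assume a graded involution $\bar{\,\cdot\,}$ exists on $\C_q$. Since $\C_q$ is $\Z\times\Z$-graded with one-dimensional degree-$(1,0)$ component $\C t_1$, the element $\bar{t_1}$ must have the form $\alpha\, t_1$ for some $\alpha\in\C^{*}$; similarly $\bar{t_2}=\beta\, t_2$. Applying $\bar{\bar{\,\cdot\,}}=\id$ and using $\C$-linearity forces $\alpha^2=\beta^2=1$, so $\alpha,\beta\in\{\pm 1\}$. This already gives the claimed form $\bar{t_i}=\theta_i t_i$ with $\theta_i=\pm 1$.

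To extract the restriction on $q$, I apply the anti-involution property to the defining relation $t_2 t_1 = q\, t_1 t_2$:
\[
\theta_1\theta_2\, t_1 t_2 \;=\; \bar{t_1}\bar{t_2} \;=\; \overline{t_2 t_1} \;=\; \overline{q\, t_1 t_2} \;=\; q\,\bar{t_2}\bar{t_1} \;=\; q\theta_1\theta_2\, t_2 t_1 \;=\; q^2\theta_1\theta_2\, t_1 t_2.
\]
Cancelling the nonzero scalar yields $q^2=1$, so $q=\pm 1$ and $\C_q$ is elementary.

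For the converse, fix $q=\pm 1$ and $\theta_1,\theta_2\in\{\pm 1\}$, and define a $\C$-linear map on the PBW-like basis by $\overline{t_1^i t_2^j} := \theta_1^i\theta_2^j q^{ij}\, t_1^i t_2^j$. This is clearly degree preserving, so graded. For the anti-homomorphism property, one expands $\overline{(t_1^i t_2^j)(t_1^k t_2^l)}$ and $\overline{t_1^k t_2^l}\,\overline{t_1^i t_2^j}$ using the commutation rule $t_2^j t_1^k = q^{jk} t_1^k t_2^j$; both sides reduce to a scalar multiple of $t_1^{i+k} t_2^{j+l}$, and the discrepancy is an overall factor $q^{2jk}$ which is $1$ since $q^2=1$. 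Applying the map twice produces the factor $(\theta_1^i\theta_2^j q^{ij})^2 = q^{2ij} = 1$, confirming $\bar{\bar{\,\cdot\,}}=\id$. By construction $\bar{t_1}=\theta_1 t_1$ and $\bar{t_2}=\theta_2 t_2$, completing the proof.

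The main obstacle is very mild: it amounts to tracking $q$-powers in the converse verification and recognising that the condition $q^2=1$ derived in the forward direction is exactly what makes those extra powers collapse to $1$. No additional structural input from $\C_q$ is needed.
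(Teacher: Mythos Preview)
Your argument is correct. The paper does not supply its own proof of this proposition; it is quoted from Yoshii's thesis \cite{Y1} and used as a black box, so there is nothing to compare your approach against. Your forward direction (forcing $\overline{t_i}=\theta_i t_i$ from the one-dimensionality of the graded pieces, then extracting $q^2=1$ from the anti-multiplicativity applied to $t_2t_1=qt_1t_2$) and your explicit converse construction $\overline{t_1^it_2^j}=\theta_1^i\theta_2^jq^{ij}t_1^it_2^j$ are the standard route and the computations check out; the only point worth noting is that in the squared-involution check the factor is $(\theta_1^i\theta_2^jq^{ij})^2=\theta_1^{2i}\theta_2^{2j}q^{2ij}$, and you are implicitly using $\theta_i^2=1$ as well as $q^2=1$.
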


In particular, $\C_q$ is symmetric about $t_1$ and $t_2$.
So, instead of looking at all eight possible triples
$(q,\thetaone,\thetatwo)$, we can, without loss of generality,
restrict our attention to the parameters in the following six cases:
$$
(q,\theta_1,\theta_2) \in \big\{(-1,1,1), (-1,1,-1), (-1,-1,-1),
(1,1,1), (1,1,-1), (1,-1,-1) \big\}.
$$
We denote the set on the right by $\P$.

\begin{lem}[Proposition 2.44 in \cite{BGK}] \label{lemma:Cq's structure}
Let $q$ be a primitive root of unit of order $m$. Then we have
\begin{enumerate}
\item The center $Z(\C_q)$ of $\C_q$ has a basis consisting of
monomials $t_1^kt_2^l$ for $m|k$ and $m|l$.
\item The Lie algebra $[\C_q,\C_q]$ has a basis consisting of
monomials $t_1^kt_2^l$ for $m \nmid k$ or $m \nmid l$.
\item $\C_q=[\C_q,\C_q] \oplus Z(\C_q)$.
\end{enumerate}
\end{lem}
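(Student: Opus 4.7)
The plan is to exploit the $\Z\times\Z$-grading of $\C_q$ together with the single commutation relation $t_2 t_1 = q t_1 t_2$. The key computation, which I would carry out first, is the bracket of two basis monomials:
\begin{equation*}
[t_1^a t_2^b,\; t_1^k t_2^l] \;=\; (q^{-bk} - q^{-al})\, t_1^{a+k} t_2^{b+l}.
\end{equation*}
Every subsequent argument is extracted from this one formula.

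For part (1), I would note that $t_1^k t_2^l$ is central if and only if it commutes with the generators $t_1$ and $t_2$. Specializing the formula to $(a,b)=(1,0)$ and $(a,b)=(0,1)$ gives $q^{-l}=1$ and $q^{-k}=1$ respectively, which (since $q$ has order exactly $m$) is equivalent to $m\mid k$ and $m\mid l$. Conversely, if $m\mid k$ and $m\mid l$, the bracket formula shows $t_1^k t_2^l$ commutes with every monomial, hence with all of $\C_q$.

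For part (2), let $V$ denote the span of the monomials $t_1^k t_2^l$ with $m\nmid k$ or $m\nmid l$, and $C$ the span of the remaining monomials, so that $\C_q = V\oplus C$ as a vector space (this is just the monomial basis split). I would show $V\subset [\C_q,\C_q]$ by an explicit one-line commutator: if $m\nmid k$, then $[t_2,\,t_1^k t_2^{l-1}] = (q^k-1)t_1^k t_2^l$ and the scalar is nonzero, while if $m\nmid l$ one uses $[t_1,\,t_1^{k-1}t_2^l]$ instead. Conversely, I would show $[\C_q,\C_q]\subset V$: by linearity it suffices to see that every nonzero monomial commutator lies in $V$, and from the displayed formula a bracket $[t_1^at_2^b, t_1^kt_2^l]$ whose result lies in $C$ would require $m\mid(a+k)$ and $m\mid(b+l)$, whence $a\equiv -k$, $b\equiv -l\pmod m$, forcing $al-bk\equiv 0\pmod m$ and thus the scalar $q^{-bk}-q^{-al}$ to vanish. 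Combining both inclusions gives $[\C_q,\C_q]=V$, and then (3) is immediate from $\C_q = V\oplus C = [\C_q,\C_q]\oplus Z(\C_q)$.

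No step is really an obstacle here — the whole proof hinges on the single commutator identity and the fact that $q^n=1$ precisely when $m\mid n$. The only mildly delicate point is the converse inclusion in (2), where one must verify that the scalar in the commutator formula vanishes exactly when the resulting monomial would otherwise land in the central piece $C$; that is the congruence calculation $al-bk\equiv 0\pmod m$ above, and it is what makes the direct sum in (3) work cleanly.
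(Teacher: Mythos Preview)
Your argument is correct and complete. The paper does not actually prove this lemma; it merely quotes it from Berman--Gao--Krylyuk (Proposition~2.44 there), so there is no proof in the paper to compare against. Your direct approach via the $\Z\times\Z$-grading and the single commutator identity is the standard one, and every step you outline goes through.

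One cosmetic inconsistency worth fixing: with the paper's convention $t_2 t_1 = q t_1 t_2$ one gets
\[
[t_1^a t_2^b,\; t_1^k t_2^l] \;=\; (q^{bk} - q^{al})\, t_1^{a+k} t_2^{b+l},
\]
with positive exponents rather than the $(q^{-bk}-q^{-al})$ you display. Indeed, your own computation $[t_2,\,t_1^k t_2^{l-1}] = (q^k-1)t_1^k t_2^l$ in part~(2) already uses the correct sign. This is harmless for the argument, since in either version the scalar vanishes exactly when $m\mid(bk-al)$, which is all you need. You might also make explicit the one-line observation that $Z(\C_q)$ is a graded subspace (because the bracket respects the $\Z\times\Z$-grading), which justifies reducing the centrality test to monomials.
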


In the rest part of this paper, we always assume that $q= \pm 1$
and $m$ is the order of $q$. Note that $m=1$ or $2$.

\begin{defn}
For $\alpha, \beta \in \C^*$ and
$(q,\theta_1,\theta_2) \in \P$, we define the ideal
$J(\alpha,\beta)$ of $\C_q$ as follows:
\begin{enumerate}
\item If $q=-1$ or $(q,\theta_1,\theta_2)=(1,-1,-1)$, then
$J(\alpha,\beta)$ is generated by $(t_1-\alpha)(t_1+\alpha)$, $(t_2
-\beta)(t_2+\beta)$.
\item If $(q,\theta_1,\theta_2)=(1,1,1)$, then $J(\alpha,\beta)$ is
generated by $t_1-\alpha$, $t_2 -\beta$.
\item If $(q,\theta_1,\theta_2)=(1,1,-1)$, then $J(\alpha,\beta)$ is
generated by $t_1-\alpha$, $(t_2 -\beta)(t_2+\beta)$.
\end{enumerate}
\end{defn}

\begin{rem}
By Lemma \ref{lemma:Cq's structure}, for $\alpha,\beta \in \C^*$,
we know that the generators of the ideal
$J(\alpha,\beta)$ always lie in the center of $\C_q$. So each
element of $J(\alpha,\beta)$ can be written as a $\C_q-$linear
combination of the two generators.
\end{rem}

\begin{lem}
Let $\alpha,\beta \in \C^*$, then
$J(\alpha,\beta)$ is an involutive ideal.
\end{lem}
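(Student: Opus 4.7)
The plan is to verify directly that each generator of $\Jab$ listed in the definition is fixed by the involution, and then leverage the remark immediately preceding the lemma (which observes that the generators are central in $\C_q$, so every element of $\Jab$ is a $\C_q$-linear combination of the two generators) to promote this generator-level invariance to the full ideal.

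First I would handle each of the three cases in turn. Recall from Proposition \ref{Yoji} that the involution acts by $\tonebar = \thetaone \tone$ and $\ttwobar = \thetatwo \ttwo$. In Case (1), where the generators are $\tone^2 - \alpha^2$ and $\ttwo^2 - \beta^2$, one computes $\overline{\tone^2} = \tonebar \cdot \tonebar = \thetaone^2 \tone^2 = \tone^2$ regardless of the sign $\thetaone$, and similarly $\overline{\ttwo^2} = \ttwo^2$, so both generators are fixed. In Case (2), where $\thetaone = \thetatwo = 1$, we have $\overline{\tone - \alpha} = \tone - \alpha$ and $\overline{\ttwo - \beta} = \ttwo - \beta$ immediately. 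In Case (3), where $\thetaone = 1$ and $\thetatwo = -1$, the generator $\tone - \alpha$ is fixed for the same reason as in Case (2), and $\overline{\ttwo^2 - \beta^2} = \ttwo^2 - \beta^2$ by the same calculation as in Case (1).

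Second, I would combine this with centrality. By the remark, every $u \in \Jab$ can be written $u = c_1 x_1 + c_2 x_2$ where $x_1, x_2$ are the two generators and $c_1, c_2 \in \C_q$. Since $x_1, x_2 \in Z(\C_q)$ and $x_i = \overline{x_i}$, we get
\begin{equation*}
\overline{u} = \overline{c_1 x_1} + \overline{c_2 x_2} = \overline{x_1}\,\overline{c_1} + \overline{x_2}\,\overline{c_2} = x_1 \overline{c_1} + x_2 \overline{c_2} = \overline{c_1} x_1 + \overline{c_2} x_2 \in \Jab,
\end{equation*}
so $\Jab$ is closed under the involution. Since it is already an ideal by construction, this shows it is an involutive ideal.

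There is no substantive obstacle here; the only point to be careful about is that the involution is an \emph{anti}-involution, so one must use centrality of the generators to commute them past the images $\overline{c_i}$. Once centrality is invoked, the argument is uniform across the three cases.
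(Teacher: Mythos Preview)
Your proposal is correct and follows essentially the same approach as the paper: verify that the generators are fixed by the involution (case by case over $(q,\theta_1,\theta_2)$), then use centrality of the generators to pass the involution through an arbitrary $\C_q$-linear combination. The paper organizes the write-up slightly differently---it carries out the closure argument separately in each case rather than once uniformly---but the underlying computation is identical.
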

\begin{proof}
Let $x = (\tone - \alpha)(\tone + \alpha)$, $y = (\ttwo - \beta)(\ttwo + \beta)$,
$u = \tone - \alpha$, $v= \ttwo - \beta$, and $(q,\thetaone,\thetatwo) \in \P$.
Note that
\begin{equation*}
\begin{split}
\overline{x} = & \overline{(\tone - \alpha)(\tone + \alpha)} =
(\overline{\tone}+\alpha) (\overline{\tone}-\alpha) \\
= & (\thetaone\tone + \alpha)(\thetaone\tone-\alpha)
= (\tone + \thetaone \alpha)(\tone-\thetaone\alpha) = x.
\end{split}
\end{equation*}
Similarly, $\overline{y} = y$.
Also, $\overline{u} = \thetaone\tone - \alpha$ and
$\overline{v} = \thetatwo\ttwo - \beta$.
\begin{enumerate}
\item[(i)]
Suppose $q=-1$ or $(q,\thetaone,\thetatwo) =(1,-1,-1)$.
Since a typical element of $\Jab$ is of the form $ax + by$
for some $a,b \in \Cq$.  But then
\begin{equation*}
\overline{ax+by} = \overline{x}\ \overline{a}+\overline{y}\ \overline{b}
= x\overline{a}+y\overline{b} = \overline{a}x+\overline{b}y \in \Jab.
\end{equation*}
\item[(ii)]
Suppose $(q,\thetaone,\thetatwo) = (1,1,1)$.
Take a typical element $au + bv$ in $\Jab$, where $a,b \in \Cq$.
Note that $\overline{u} = u$ and $\overline{v} = v$ since
$\thetaone = \thetatwo = 1$.
Then
\begin{equation*}
\overline{au+bv} = \overline{u}\ \overline{a} + \overline{v}\ \overline{b}
= u \overline{a}+v \overline{b} = \overline{a} u+\overline{b} v \in \Jab.
\end{equation*}
\item[(iii)]
Suppose $(q,\thetaone,\thetatwo) = (1,1,-1)$.
Take a typical element $au + by$ in $\Jab$, where $a,b \in \Cq$.
Note that $\overline{u} = u$ since $\thetaone = 1$.  Then
\begin{equation*}
\overline{au+by} = \overline{u}\ \overline{a} + \overline{y}\ \overline{b}
= u\overline{a}+y\overline{b} = \overline{a}u+\overline{b}y \in \Jab.
\end{equation*}
\end{enumerate}
So, across all the possibilities for $(q,\thetaone,\thetatwo)$, $\Jab$ is closed under involution.
\end{proof}

\begin{lem}\label{lemma:Cq/Jalphabeta}
For $\alpha,\beta \in \C^*$ and
$(q,\theta_1,\theta_2) \in \P$, we have
\begin{enumerate}
\item If $(q,\theta_1,\theta_2)=(-1,1,1)$ or $(-1,1,-1)$, there is an
isomorphism
$$
\C_q/J(\alpha,\beta) \cong M_2(\C)
$$
preserving the involutions.
\item If $(q,\theta_1,\theta_2)=(-1,-1,-1)$, there is an
isomorphism
$$
\C_q/J(\alpha,\beta) \cong \m2
$$
preserving the involutions, where the involution of $\m2$ is defined
as in Example 2.5.
\item If $(q,\theta_1,\theta_2)=(1,1,1)$, there is an isomorphism
$$
\C_q/J(\alpha,\beta) \cong \C
$$
preserving the involutions, where the involution of $\C$ is the
identity map.
\item If $(q,\theta_1,\theta_2)=(1,1,-1)$, there is an isomorphism
$$
\C_q/J(\alpha,\beta) \cong \C \oplus \C^{\rm{op}}
$$
preserving the involutions, where the involution of $\C \oplus
\C^{\rm{op}}$ is defined as in Example \ref{eg:gtnrSplusSop}.
\item If $(q,\theta_1,\theta_2)=(1,-1,-1)$, there is an isomorphism
$$
\C_q/J(\alpha,\beta) \cong \C \K
$$
preserving the involutions, where the involution of $\C \K$ is
defined as in Example \ref{eg:gtnrCK}.
\end{enumerate}
\end{lem}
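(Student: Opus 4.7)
My plan is to handle the five cases uniformly: for each, construct an explicit unital $\C$-algebra homomorphism $\varphi : \Cq \to R$ (with $R$ the target algebra) by prescribing the images $T_i := \varphi(\ti)$ for $i=1,2$, and then identify the induced map $\Cq/\Jab \to R$ as an isomorphism via a dimension count. In each case I verify four things: (i) $T_2 T_1 = q T_1 T_2$ and each $T_i$ is a unit, so that $\varphi$ is well-defined on $\Cq$; (ii) the generators of $\Jab$ lie in $\ker\varphi$, so that $\varphi$ factors through the quotient; (iii) $\varphi(\tibar) = \overline{T_i}$, so that $\varphi$ intertwines involutions; and (iv) $\varphi$ is surjective.

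For the dimension bound on $\Cq/\Jab$, I use that its generators are central, so any monomial $\tone^i \ttwo^j$ can be reduced modulo $\Jab$. In cases (1), (2) and (5) the relations $\tone^2 \equiv \alpha^2$ and $\ttwo^2 \equiv \beta^2$ produce the spanning set $\{1, \tone, \ttwo, \tone\ttwo\}$; case (3) produces $\{1\}$; case (4) produces $\{1, \ttwo\}$. These upper bounds $4, 4, 1, 2, 4$ match $\dim \MtwoC = \dim \tildeMtwoC = \dim \CK = 4$, $\dim \CplusCop = 2$, and $\dim \C = 1$, so once surjectivity of $\varphi$ is established the induced map on the quotient must be an isomorphism.

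For the explicit $T_i$, in cases (1) and (2) I pick two of the three anticommuting Pauli-type matrices
$\sigma_1 = \bigl(\begin{smallmatrix} 0 & 1\\ 1 & 0\end{smallmatrix}\bigr),\ \sigma_2 = \bigl(\begin{smallmatrix} 0 & -\sqrtnegone\\ \sqrtnegone & 0\end{smallmatrix}\bigr),\ \sigma_3 = \bigl(\begin{smallmatrix} 1 & 0\\ 0 & -1\end{smallmatrix}\bigr)$
(each squaring to $I_2$), scaled by $\alpha$ and $\beta$; the pair chosen, and the ambient involution (the transpose on $\MtwoC$ in case (1), the map $\widetilde{\,}$ from Example \ref{eg:gtnrtildeM} on $\tildeMtwoC$ in case (2)), are dictated by the signs $(\thetaone,\thetatwo)$. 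For instance, in the subcase $(-1,1,1)$ I take $(\alpha\sigma_1, \beta\sigma_3)$; in $(-1,1,-1)$ I take $(\alpha\sigma_1, \beta\sigma_2)$; in case (2) I take $(\alpha\sigma_1, \beta\sigma_3)$ inside $\tildeMtwoC$. In case (3) take $(\alpha, \beta)$. In case (4), the swap involution on $\CplusCop$ together with $(\thetaone,\thetatwo) = (1,-1)$ suggests $T_1 = \alpha(1,1),\ T_2 = \beta(1,-1)$. In case (5) take $T_1 = \alpha\tau,\ T_2 = \beta\gamma \in \CK$, matching $\bar{\tau} = -\tau$ and $\bar{\gamma} = -\gamma$.

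The most delicate step will be the involution check in case (2): I must verify that the nonstandard involution $\widetilde{\,}$ of Example \ref{eg:gtnrtildeM} sends each of $\sigma_1$ and $\sigma_3$ to its negative, as $(\thetaone,\thetatwo) = (-1,-1)$ requires. This reduces to a direct $2 \times 2$ conjugation using the matrix $\bigl(\begin{smallmatrix} 0 & 1 \\ -1 & 0 \end{smallmatrix}\bigr)$ defining $\widetilde{\,}$, and in fact shows that $\widetilde{\,}$ acts as $-\id$ on the whole traceless subspace $\sl_2(\C)$, consistent with $\tildeMtwoC_+ = \C I_2$ and $\tildeMtwoC_- = \sl_2(\C)$ from Example \ref{eg:gtnrtildeM}.
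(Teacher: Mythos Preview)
Your proposal is correct and follows essentially the same approach as the paper: both construct explicit algebra homomorphisms $\varphi:\Cq\to R$ by prescribing $\varphi(t_1),\varphi(t_2)$, verify the defining relation $T_2T_1=qT_1T_2$, check that $\varphi$ intertwines the involutions and kills the generators of $\Jab$, and then conclude by a dimension count. The only cosmetic difference is the particular choice of anticommuting matrices in cases (1)--(2): the paper sends $t_1$ to (a scalar multiple of) $\sigma_3$ and $t_2$ to a $\theta_2$-dependent off-diagonal matrix, whereas you use other pairs of Pauli matrices; in cases (3)--(5) your maps coincide exactly with the paper's.
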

\begin{proof}
\begin{enumerate}
\item
Define an algebra homomorphism $\varphi: \Cq \to \MtwoC$ by
$\varphi(\tone) = \left[ \begin{array}{rr} \alpha & 0 \\ 0 &
-\alpha \end{array} \right]$ and $\varphi(\ttwo) =
\left[ \begin{array}{rr} 0 & \sqrt{\thetatwo} \beta \\
\frac{1}{\sqrt{\thetatwo}}\beta & 0 \end{array} \right]$.
Since
\begin{equation*}
\begin{split}
\varphi(\ttwo)\varphi(\tone)
= & \left[ \begin{array}{rr} 0 & \sqrt{\thetatwo}\beta \\
\frac{1}{\sqrt{\thetatwo}}\beta & 0  \end{array} \right]
= \left[ \begin{array}{rr} 0 & -\sqrt{\thetatwo}\beta\alpha \\
\frac{1}{\sqrt{\thetatwo}}\beta\alpha & 0 \\ \end{array} \right] \\
= & \left[ \begin{array}{rr} -\alpha & 0 \\ 0 & \alpha \end{array} \right]
\left[ \begin{array}{rr} 0 & \sqrt{\thetatwo}\beta \\
\frac{1}{\sqrt{\thetatwo}}\beta & 0 \\ \end{array} \right] = q \varphi(\tone)\varphi(\ttwo),
\end{split}
\end{equation*}
we see that $\varphi$ respects the relation $\ttwo \tone = q \tone \ttwo$.
$\varphi$ also preserves the involution since
\begin{equation*}
\varphi(\tonebar) = \varphi(\thetaone \tone) = \varphi(\tone) =
\left[ \begin{array}{rr} \alpha & 0 \\
0 & -\alpha \end{array} \right] =
\left[ \begin{array}{rr} \alpha & 0 \\ 0 & -\alpha  \end{array} \right]^t
= \overline{\varphi(\tone)},
\end{equation*}
and
\begin{equation*}
\begin{split}
\varphi(\ttwobar)
= & \varphi(\thetatwo \ttwo) =
\left[ \begin{array}{rr} 0 & \thetatwo\sqrt{\thetatwo}\beta \\
\thetatwo \frac{1}{\sqrt{\thetatwo}}\beta & 0 \end{array} \right] \\
= & \left[ \begin{array}{rr} 0 & \frac{1}{\sqrt{\thetatwo}}\beta \\
\sqrt{\thetatwo}\beta & 0 \end{array} \right]
= \left[ \begin{array}{rr} 0 & \sqrt{\thetatwo}\beta \\
\frac{1}{\sqrt{\thetatwo}}\beta & 0 \end{array} \right]^t
= \overline{\varphi(\ttwo)}.
\end{split}
\end{equation*}
Since
$\varphi(\tone^2 - \alpha^2) = 0$ and
$\varphi(\ttwo^2 - \beta^2) =0$, $\Jab \subset \ker \varphi$.
So $\varphi$ induces an algebra homomorphism
$\varphitilde: \Cq/\Jab \to \MtwoC$.
Since $\varphitilde(1 + \Jab)$, $\varphitilde(\tone + \Jab)$,
$\varphitilde(\ttwo + \Jab)$ and $\varphitilde(\tone\ttwo + \Jab)$
are linearly independent in $\MtwoC$, $\rank \varphitilde = 4$.
That is, $\varphitilde$ is surjective.
Since $\dim\left( \Cq/\Jab \right) = 4$,
$\varphitilde$ is also injective.  Hence,
$\varphitilde$ is an isomorphism and $\Cq/\Jab \cong \MtwoC$.

Similarly we can prove (2)-(5) with the following homomorphisms:
\item
$\varphi: \Cq \to \tildeMtwoC$ by $\tone \mapsto
\left[ \begin{array}{rr} \alpha & 0 \\
0 & -\alpha \end{array} \right] $,
$\ttwo  \mapsto
\left[ \begin{array}{rr} 0 & \beta \\
\beta & 0 \end{array} \right]$.
\item
$\varphi: \Cq \to \C$ by $\tone \mapsto \alpha$ and $\ttwo  \mapsto  \beta$.
\item
$\varphi: \Cq \to \CplusCop$ by
$\tone \mapsto (\alpha,\alpha)$ and $\ttwo  \mapsto  (\beta, -\beta)$.
\item
$\varphi: \Cq \to \CK$ by
$\tone \mapsto \alpha \tau$ and $\ttwo  \mapsto  \beta \gamma$.
\qedhere
\end{enumerate}
\end{proof}

\begin{defn}
Let $(q,\theta_1,\theta_2) \in \P$, and $f(T), g(T) \in \C[T]$ with
$f(0)g(0) \neq 0$ and $\deg f \deg g > 0$.  Then $J(f, g)$ is the
ideal of $\C_q$ generated by $f(t_1^m)$ and $g(t_2^m)$, where $m$ is
the order of $q$.
\end{defn}

\begin{lem} \label{lemma:Jfg closed under involution}
Let $(q,\theta_1,\theta_2) \in \P$ and $f(T), g(T) \in \C[T]$ with
$f(0)g(0) \neq 0$ and $\deg f \deg g > 0$. If $f(\theta_1^m T)=
f(T)$ and $g(\theta_2^m T)=  g(T)$,
then $J(f, g)$ is invariant under the involution
$\bar{\,\;}$.
\end{lem}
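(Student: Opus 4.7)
The plan is to reduce the claim to showing that the two generators $f(\tone^m)$ and $g(\ttwo^m)$ of $J(f,g)$ are individually fixed by the involution, and then propagate this to an arbitrary element using the centrality of the generators (together with the anti-multiplicative property of $\bar{\ }$).

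First I would observe that since $m$ is the order of $q$, the elements $\tone^m$ and $\ttwo^m$ commute with both $\tone$ and $\ttwo$, so any polynomial in $\tone^m$ alone, or in $\ttwo^m$ alone, lies in $Z(\Cq)$ by Lemma \ref{lemma:Cq's structure}. In particular, $f(\tone^m)$ and $g(\ttwo^m)$ are central. Consequently every element of $J(f,g)$ can be written in the form
\begin{equation*}
x \;=\; a\, f(\tone^m) \;+\; b\, g(\ttwo^m)
\end{equation*}
for some $a,b \in \Cq$ (there is no distinction between left and right multiples).

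Next I would compute the involution on each generator. Writing $f(T) = \sum_{k} c_k T^k$ and using $\overline{\tone^{mk}} = (\thetaone \tone)^{mk} = (\thetaone^m)^k \tone^{mk}$ (which is valid because $\bar{\ }$ reverses order but $\tone$ commutes with itself), we get
\begin{equation*}
\overline{f(\tone^m)} \;=\; \sum_{k} c_k (\thetaone^m)^k \tone^{mk} \;=\; f(\thetaone^m \tone^m).
\end{equation*}
The hypothesis $f(\thetaone^m T) = f(T)$ then gives $\overline{f(\tone^m)} = f(\tone^m)$, and similarly $\overline{g(\ttwo^m)} = g(\ttwo^m)$.

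Finally, for a general element $x = a f(\tone^m) + b g(\ttwo^m) \in J(f,g)$, using the anti-involution property and the centrality established above,
\begin{equation*}
\overline{x} \;=\; \overline{f(\tone^m)}\,\overline{a} + \overline{g(\ttwo^m)}\,\overline{b} \;=\; f(\tone^m)\,\overline{a} + g(\ttwo^m)\,\overline{b} \;=\; \overline{a}\,f(\tone^m) + \overline{b}\,g(\ttwo^m) \in J(f,g).
\end{equation*}
Hence $J(f,g)$ is closed under $\bar{\ }$. There is no real obstacle here; the only point that requires care is the use of centrality of $f(\tone^m)$ and $g(\ttwo^m)$ to freely move $\overline{a}$ and $\overline{b}$ from the right to the left so that the expression is manifestly a $\Cq$-combination of the generators.
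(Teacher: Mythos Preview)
Your proof is correct and follows essentially the same approach as the paper: both arguments verify that the central generators $f(\tone^m)$ and $g(\ttwo^m)$ are fixed by the involution via the hypothesis $f(\thetaone^m T)=f(T)$, $g(\thetatwo^m T)=g(T)$, and then use centrality together with the anti-multiplicativity of $\bar{\ }$ to conclude that $\overline{a f(\tone^m)+b g(\ttwo^m)}=\overline{a} f(\tone^m)+\overline{b} g(\ttwo^m)\in J(f,g)$. Your version is slightly more explicit about the coefficient computation and the role of centrality, but there is no substantive difference.
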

\begin{proof}
First we have
\begin{equation*}
\overline{f(\tone^m)} = f \left( \overline{\tone^m} \right) =
f\left( ( \overline{\tone} )^m \right) = f\left( (\thetaone\tone)^m \right)
= f\left( \thetaone^m \tone^m \right) = f(\tone^m),
\end{equation*}
where the last equality follows by hypothesis.  Similarly
$\overline{g(\ttwo^m)} = g(\ttwo^m)$.  Since $f(t_1^m)$ and $g(t_2^m)$
are in the center of $\Cq$, a typical element of $\Jfg$ is of the form
$a f(\tone^m) + b g(\ttwo^m)$ for some $a,b \in \Cq$.  But then
\begin{equation*}
\overline{ a f(\tone^m) + b g(\ttwo^m) } =
\overline{f(\tone^m)} \ovla + \overline{g(\ttwo^m)} \ovlb
= f(\tone^m) \ovla + g(\ttwo^m) \ovlb
= \ovla f(\tone^m) + \ovlb g(\ttwo^m)
\end{equation*}
and $\ovla f(\tone^m) + \ovlb g(\ttwo^m)$ lies in $\Jfg$.
That is, $\Jfg$ is closed under involution.
\end{proof}

\begin{rem}\label{remark:meaning of f(thetamT)=f(T)}
For a polynomial $f(T) \in \C[T]$ with $\deg f > 0$ and $f(0) \neq
0$, the roots of $f$ are all nonzero. The condition $ f(\eta^m T)=
 f(T)$ always holds if $\eta=1$ or $\eta=-1$, $m=2$. For $\eta=-1$ and $m=1$
the condition is equivalent to that if $\alpha$ is a root of $f$,
then $-\alpha$ is also a root of $f$ with the same multiplicity,
i.e., $f(T)$ can be written as
\begin{equation}
\lambda (T -\alpha_1)^{k_1} (T +\alpha_1)^{k_1} \cdots (T
-\alpha_s)^{k_s} (T + \alpha_s)^{k_s}
\end{equation}
where $\lambda \neq 0$, $s$ is a positive integer, $\pm \alpha_1,
\cdots, \pm \alpha_s \neq 0$ and distinct, $k_i > 0$, $i=1,\cdots,
s$.

Conversely, any polynomial $f$ of form (3.1) satisfies $ f(-T)=
f(T)$.
\end{rem}

\begin{rem}\label{remark:involution on AoplusB}
Let $\A$ and $\B$ be two associative algebras with involutions.
The direct sum $\A \oplus B$ is an associative algebra.
A new involution $\,\bar{\;}\,$ can be defined
by $\overline{(a, b)} = (\bar{a},\bar{b})$, for $a \in \A$, $b \in
\B$. $\A \oplus \B$ becomes an associative algebra with involution.
The involutions in the following lemma always have this meaning.
\end{rem}

In the rest part of the paper, if $f, g \in \C[T]$, the greatest
common divisor of $f$ and $g$ is denoted by $(f, g)$ and the formal
derivative of $f$ is denoted by $f'$. $f|g$ means that $f$ is a
divisor of $g$.

\begin{lem}\label{lemma:splitting Cq/Jf1f2g}
Let $(q,\theta_1,\theta_2) \in \P$ and $f_1, f_2, g_1,g_2 \in \C[T]$
with $(f_1,f_2)=1$, $(g_1,g_2)=1$, $f_1\cdot f_2\cdot g_1\cdot g_2(0) \neq 0$, $\deg f_1 \deg f_2 \deg g_1
\deg g_2 >0$ and
\begin{equation*}
f_i(\theta_1^m T)=  f_i(T), \; g_i(\theta_2^m T)=  g_i(T)
\end{equation*}
for $i=1,2$. Then there are isomorphisms of associative algebras
preserving the involutions
$$
\C_q/J(f_1 f_2,g_1) \cong \C_q/J(f_1,g_1) \oplus \C_q/J(f_2,g_1)
$$
and
$$
\C_q/J(f_1,g_1 g_2) \cong \C_q/J(f_1,g_1) \oplus \C_q/J(f_1,g_2).
$$
\end{lem}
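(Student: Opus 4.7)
The natural approach is a Chinese-Remainder-Theorem argument, exploiting the fact that the generators $f_1(t_1^m)$, $f_2(t_1^m)$, $g_1(t_2^m)$ are all central in $\C_q$ (by Lemma~\ref{lemma:Cq's structure}). I plan to construct the map
\begin{equation*}
\Phi \colon \C_q/J(f_1 f_2, g_1) \longrightarrow \C_q/J(f_1,g_1) \oplus \C_q/J(f_2,g_1), \qquad a + J(f_1 f_2, g_1) \mapsto \bigl(a + J(f_1,g_1),\ a + J(f_2,g_1)\bigr),
\end{equation*}
and show that it is a well-defined isomorphism of associative algebras preserving the involutions. The analogous argument will handle the second isomorphism by symmetry in $t_1$ and $t_2$.

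Well-definedness amounts to the inclusion $J(f_1 f_2, g_1) \subset J(f_1,g_1) \cap J(f_2,g_1)$, which is immediate since $f_1 f_2(t_1^m) = f_1(t_1^m)\,f_2(t_1^m)$ lies in both factors and $g_1(t_2^m)$ lies in both. For surjectivity I would use coprimality of $f_1$ and $f_2$ in $\C[T]$: pick $u,v \in \C[T]$ with $uf_1 + vf_2 = 1$, so that $e_1 := v(t_1^m)f_2(t_1^m) \in J(f_2,g_1)$ and $e_2 := u(t_1^m)f_1(t_1^m) \in J(f_1,g_1)$ satisfy $e_1 + e_2 = 1$. Then $\Phi(e_1 + J(f_1 f_2, g_1)) = (1,0)$ and $\Phi(e_2 + J(f_1 f_2, g_1)) = (0,1)$, so for any $(a + J(f_1,g_1), b + J(f_2,g_1))$ the element $ae_1 + be_2 + J(f_1f_2, g_1)$ is a preimage.

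For injectivity I need $J(f_1,g_1) \cap J(f_2,g_1) \subset J(f_1 f_2, g_1)$. Given $a$ in the intersection, I multiply $a = a \cdot (e_1 + e_2)$ and use that every element of $J(f_i, g_1)$ can be written as $\alpha_i f_i(t_1^m) + \beta_i g_1(t_2^m)$ with $\alpha_i, \beta_i \in \C_q$, since the generators are central. Expanding $a e_1$ using the representation of $a \in J(f_2,g_1)$ and $a e_2$ using the representation of $a \in J(f_1,g_1)$, centrality lets me factor out $f_1(t_1^m) f_2(t_1^m)$ from the "polynomial" terms and $g_1(t_2^m)$ from the rest, placing the sum in $J(f_1 f_2, g_1)$. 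This is the one step requiring a little care, but it is routine once one tracks that all the relevant generators commute with everything.

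Finally, to see that $\Phi$ respects the involutions, I note that Lemma~\ref{lemma:Jfg closed under involution} (using that $f_1, f_2, g_1$ individually satisfy the invariance $f_i(\theta_1^m T) = f_i(T)$, $g_1(\theta_2^m T) = g_1(T)$, and hence so does $f_1 f_2$) ensures each of the three ideals is involution-stable. Consequently, the involution descends to each quotient, and $\Phi$ intertwines it with the componentwise involution on the direct sum defined in Remark~\ref{remark:involution on AoplusB}. The main (and essentially only nontrivial) obstacle is the intersection computation $J(f_1,g_1)\cap J(f_2,g_1) = J(f_1f_2,g_1)$; the rest of the argument is formal.
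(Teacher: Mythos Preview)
Your proposal is correct and follows essentially the same Chinese-Remainder-Theorem argument as the paper: the paper also builds the diagonal map, proves surjectivity via a B\'ezout identity $h_1 f_1 + h_2 f_2 = 1$ to produce idempotents mapping to $(1,0)$ and $(0,1)$, and establishes injectivity by showing $J(f_1,g_1)\cap J(f_2,g_1)=J(f_1f_2,g_1)$ via the same trick of multiplying an intersection element by $1 = h_1(t_1^m)f_1(t_1^m)+h_2(t_1^m)f_2(t_1^m)$ and regrouping. The only cosmetic difference is that the paper defines the map out of $\C_q$ first and then identifies its kernel, whereas you start directly from the quotient.
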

\begin{proof}
Let $f = \fone\ftwo.$  The hypotheses, in combination with Lemma
\ref{lemma:Jfg closed under involution}, tell us that $\Jfgone$,
$\Jfonegone$, and $\Jftwogone$ are involutive ideals of $\Cq$.  The
involution on $\Cq$ induces involutions on the quotient algebras
$\Cq/\Jfgone$, $\Cq/\Jfonegone$, and $\Cq/\Jftwogone$.  The map
$\varphi: \Cq \to \Cq/\Jfonegone \oplus \Cq/\Jftwogone$ given by
\begin{equation*}
\varphi(a) = \big( a + \Jfonegone, \ a+ \Jftwogone \big)
\end{equation*}
is an associative algebra homomorphism that preserves the involution
on $\Cq$.  Moreover, it is surjective.  Indeed take any
$(a+\Jfonegone, \ b+\Jftwogone) \in \Cq/\Jfonegone \oplus
\Cq/\Jftwogone$, where $a, b \in \Cq$.  Since $(\fone, \ftwo) = 1$,
there exist $\hone, \htwo \in \C[T]$ such that
\begin{equation}\label{eqn:h1f1 plus h2f2 equals 1}
\hone\fone + \htwo\ftwo = 1.
\end{equation}
Let
\begin{equation*}
\aone = 1 - \hone(t_1^m)\fone(t_1^m) = \htwo(t_1^m)\ftwo(t_1^m),
\end{equation*}
and
\begin{equation*}
\atwo = \hone(t_1^m)\fone(t_1^m) = 1 - \htwo(t_1^m)\ftwo(t_1^m).
\end{equation*}
Then
\begin{equation*}
\varphi(\aone) = \big( 1 + \Jfonegone, \ 0 + \Jftwogone \big),
\quad \varphi(\atwo) = \big( 0 + \Jfonegone, \ 1 + \Jftwogone \big)
\end{equation*}
and
\begin{equation*}
\varphi(a\aone + b \atwo) = \big( a + \Jfonegone, \ b + \Jftwogone \big).
\end{equation*}
Since the kernel of this homomorphism $\varphi$ is equal to
$\Jfonegone \cap \Jftwogone$,
\begin{equation}\label{eqn:first iso thm}
\Cq/\Jfonegone \cap \Jftwogone \ \cong \ \Cq/\Jfonegone \oplus \Cq/\Jftwogone.
\end{equation}
We now show that $\Jfonegone \cap \Jftwogone = \Jfgone$.  Since $f =
\fone\ftwo$ and $\fone(t_1^m)$, $\ftwo(t_1^m)$ lie in the centre of
$\Cq$, it follows that $\Jfgone \subset \Jfonegone \cap \Jftwogone$.
To show the reverse inclusion, let $a \in \Jfonegone \cap
\Jftwogone$.  So there exist $\bone$, $\btwo$, $\cone$, and $\ctwo
\in \Cq$ such that
\begin{equation*}
a = \bone\fone(t_1^m) + \cone\gone(t_2^m) = \btwo\ftwo(t_1^m) + \ctwo\gone(t_2^m).
\end{equation*}
But then, by (\ref{eqn:h1f1 plus h2f2 equals 1}),
\begin{align*}
a & = a \big( \hone(t_1^m) \fone(t_1^m) + \htwo(t_1^m) \ftwo(t_1^m) \big) \\
& = \big( \btwo\ftwo(t_1^m) + \ctwo\gone(t_2^m) \big) \ \hone(t_1^m) \fone(t_1^m)
+ \big( \bone\fone(t_1^m) + \cone\gone(t_2^m) \big) \ \htwo(t_1^m) \ftwo(t_1^m) \\
& = \big( \bone \htwo(t_1^m) + \btwo \hone(t_1^m) \big) f(t_1^m)
+ \big( \ctwo \hone(t_1^m) \fone(t_1^m) + \cone \htwo(t_1^m) \ftwo(t_1^m) \big) \gone(t_2^m)
\end{align*}
which is an element of $\Jfgone$.
Hence $\Jfonegone \cap \Jftwogone = \Jfgone$ and, by (\ref{eqn:first
iso thm}),
\begin{equation*}
\Cq/\Jfgone \ \cong \ \Cq/\Jfonegone \oplus \Cq/\Jftwogone.
\end{equation*}
The proof that $\Cq/J(\fone, \gone\gtwo) \ \cong \ \Cq/\Jfonegone
\oplus \Cq/J(\fone,\gtwo)$ follows similarly.
\end{proof}

\section{Finite dimensional quotients of $\fg_{2n,\rho}(\C_q)$}

Suppose $\J$ is an ideal of $\fg_{2n,\rho}(\C_q)$ such that
$\fg_{2n,\rho}(\C_q)/\J$ is finite-dimensional and semisimple.
$\fg_{2n,\rho}(\C_q)/\J$ are determined in this section. The
classification of finite-dimensional quotients of
$\fg_{2n,\rho}(\C_q)$ is the premise of the classification of finite
dimensional irreducible representations of $\fg_{2n,\rho}(\C_q)$.

\begin{lem}\label{lemma:ovlgtnrJfg=undrlgtnrJfg}
Let $f(T), g(T) \in \C[T]$ such that $J(f,g)$ is involutive ideal of
$\C_q$, then for $n \geq 2$ we have
$$
\overline{\fg_{2n,\rho}}(J(f,g)) = \underline{\fg_{2n,\rho}}
(J(f,g)).
$$
\end{lem}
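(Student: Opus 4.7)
By Remark~\ref{rem:ovl=undrl}, the claimed equality $\ovlgtnrJfg = \undrlgtnrJfg$ reduces to verifying the single inclusion $[\C_q, \C_q] \cap (J(f,g))_{-} \subset [J(f,g), \C_q]$. I will split the argument on the order $m$ of $q$. When $q = 1$ (so $m = 1$), $\C_q$ is commutative, $[\C_q, \C_q] = 0$, and the inclusion is vacuous, so only the case $q = -1$ (that is, $m = 2$) requires real work.

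For $q = -1$, the strategy is to exploit the direct-sum decomposition $\C_q = [\C_q, \C_q] \oplus Z(\C_q)$ furnished by Lemma~\ref{lemma:Cq's structure}, together with the crucial observation that both generators $f(t_1^2)$ and $g(t_2^2)$ of $J(f,g)$ lie in $Z(\C_q)$. The key intermediate claim will be
$$[\C_q, \C_q] \cap J(f,g) \;=\; [J(f,g), \C_q].$$
Given this, the target inclusion follows immediately by restricting to $(J(f,g))_{-} \subset J(f,g)$. The containment $\supset$ in the intermediate claim is clear, since $[J(f,g), \C_q]$ is automatically contained both in $[\C_q, \C_q]$ and in $J(f,g)$.

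For the forward direction—which is the substantive step—I take $j \in [\C_q, \C_q] \cap J(f,g)$, write $j = a f(t_1^2) + b g(t_2^2)$ with $a,b \in \C_q$, and decompose $a = a_1 + a_2$, $b = b_1 + b_2$ according to $\C_q = [\C_q, \C_q] \oplus Z(\C_q)$. The elementary identity $[x,y]c = [xc,y]$ for central $c$ shows that both summands of this decomposition are stable under multiplication by $f(t_1^2)$ and $g(t_2^2)$; matching the $Z(\C_q)$-components of $j$ then forces $a_2 f(t_1^2) + b_2 g(t_2^2) = 0$, leaving $j = a_1 f(t_1^2) + b_1 g(t_2^2)$. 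Writing $a_1$ and $b_1$ as sums of Lie commutators and using the same identity in reverse realises each remaining summand as a commutator with an element of $J(f,g)$, placing $j$ in $[J(f,g), \C_q]$. The mild obstacle is exactly this forcing step: the centrality of the generators of $J(f,g)$ is what permits one both to move $f(t_1^2)$ and $g(t_2^2)$ inside brackets and to conclude that the $Z(\C_q)$-piece of $j$ vanishes, and it is the feature that distinguishes these particular involutive ideals from the general setting of Remark~\ref{rem:ovl=undrl}.
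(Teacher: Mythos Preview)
Your proposal is correct and follows essentially the same approach as the paper: both reduce via Remark~\ref{rem:ovl=undrl} to the inclusion $[\C_q,\C_q]\cap J(f,g)_{-}\subset[\C_q,J(f,g)]$, write an element as $af(t_1^m)+bg(t_2^m)$, decompose $a,b$ along $\C_q=[\C_q,\C_q]\oplus Z(\C_q)$, use centrality of the generators to slide them inside commutators, and then kill the $Z(\C_q)$-piece. Your explicit case split on $m$ and your statement of the intermediate equality $[\C_q,\C_q]\cap J(f,g)=[J(f,g),\C_q]$ are minor organisational differences, not substantive ones (the paper in fact never uses the ``$-$'' part of its hypothesis either).
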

\begin{proof}
We take advantage of Remark \ref{rem:ovl=undrl} which says that it
suffices to prove the inclusion relation $[\Cq,\Cq] \cap \Jfg_{-} \subset [\Cq,\Jfg]$. Let
$u$ be an arbitrary element of $[\Cq,\Cq] \cap \Jfg_{-}$.  We may write it as
\begin{equation*}
u = a f(\tone^m) + b g(\ttwo^m)
\end{equation*}
for some $a, b \in \Cq$.  Since $\Cq = [\Cq,\Cq] \oplus Z(\Cq)$, we
can express $a$ as $a = \tilde{a} + c$ and $b$ as $b = \tilde{b} +
d$ uniquely for some $\tilde{a} = \sum\limits_{i=1}^k [a_i, a_i']$ and
$\tilde{b} = \sum\limits_{j=1}^l [b_j,b_j']$ in $[\Cq,\Cq]$, with $a_i,
a_i', b_j, b_j' \in \Cq$, and $c, d \in Z(\Cq)$.  But then
\begin{align*}
u & = a f(\tone^m) + b g(\ttwo^m) \\
& = (\tilde{a} + c) f(\tone^m) + (\tilde{b} + d) g(\ttwo^m)  = \tilde{a} f(\tone^m) + \tilde{b} g(\ttwo^m) + c f(\tone^m) + d g(\ttwo^m) \\
& = \Big( \sum_{i=1}^k [a_i, a_i'] \Big) f(\tone^m) + \Big( \sum_{j=1}^l [b_j,b_j']  g(\ttwo^m) \Big) + c f(\tone^m) + d g(\ttwo^m) \\
& = \sum_{i=1}^k [a_i, a_i' f(\tone^m) ] + \sum_{j=1}^l [b_j,b_j' g(\ttwo^m) ]  + c f(\tone^m) + d g(\ttwo^m),
\end{align*}
with the last equality following from $f(\tone^m)$ and $g(\ttwo^m)$
being members of $Z(\Cq)$.  Since $u \in [\Cq, \Cq]$ and $\Cq =
[\Cq,\Cq] \oplus Z(\Cq)$, the $Z(\Cq)$-component of $u$, that is, $c
f(\tone^m) + d g(\ttwo^m)$, must equal $0$.  So
\begin{equation*}
u = \sum_{i=1}^k [a_i, a_i' f(\tone^m) ] + \sum_{j=1}^l [b_j,b_j' g(\ttwo^m) ] \in [\Cq,\Jfg]. \qedhere
\end{equation*}
\end{proof}

\begin{lem}\label{lemma:gtnrAoplusB equals gtnrAoplusgtnrB}
Let $\A$ and $\B$ be two associative algebras with involutions and
$\A \oplus \B$ be the associative algebra with involution introduced
in Remark \ref{remark:involution on AoplusB}. Then we have
$$
\fg_{2n,\rho}(\A \oplus \B) \cong \fg_{2n,\rho}(\A) \oplus
\fg_{2n,\rho}(\B).
$$
\end{lem}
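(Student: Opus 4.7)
The plan is to construct the isomorphism explicitly using the generators of $\fg_{2n,\rho}$. Define a map
\[
\varphi: \fg_{2n,\rho}(\A \oplus \B) \to \fg_{2n,\rho}(\A) \oplus \fg_{2n,\rho}(\B)
\]
on generators by $\fij(a,b) \mapsto (\fij(a), \fij(b))$, $\gij(a,b) \mapsto (\gij(a), \gij(b))$, and $\hij(a,b) \mapsto (\hij(a), \hij(b))$. To verify that $\varphi$ is a well-defined Lie algebra homomorphism, I would check that it respects the bracket relations of Proposition \ref{propn:basic brackets}: since the involution on $\A \oplus \B$ acts componentwise as $\overline{(a,b)} = (\ovla, \ovlb)$, and multiplication is also componentwise, every bracket identity in Proposition \ref{propn:basic brackets} (which involves only multiplication and involution) holds coordinatewise, so the bracket of images is the image of the bracket. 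The extension to all of $\fg_{2n,\rho}(\A \oplus \B)$ is then forced by the generating property.

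Next, I would establish surjectivity. Any generator $(\fij(a), 0)$ of the right-hand side equals $\varphi(\fij(a,0))$, while $(0, \fij(b)) = \varphi(\fij(0,b))$; the analogous statements hold for $\gij$ and $\hij$. Hence the image contains all generators of $\fg_{2n,\rho}(\A) \oplus \fg_{2n,\rho}(\B)$.

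For injectivity, I would invoke the explicit vector space decomposition of Corollary \ref{cor:gtnrR}. For $R = \A \oplus \B$, observe that $R_{\pm} = \A_{\pm} \oplus \B_{\pm}$ and $[R,R] = [\A,\A] \oplus [\B,\B]$, so
\[
R_+ + [R,R] = \bigl( \A_+ + [\A,\A] \bigr) \oplus \bigl( \B_+ + [\B,\B] \bigr).
\]
Thus the direct sum decomposition of $\fg_{2n,\rho}(\A\oplus\B)$ from Corollary \ref{cor:gtnrR} splits into the direct sum of the corresponding decompositions of $\fg_{2n,\rho}(\A)$ and $\fg_{2n,\rho}(\B)$, and $\varphi$ is a bijection on each summand. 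Hence $\varphi$ is a Lie algebra isomorphism.

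The only mild obstacle is the zero-weight summand, where one must correctly match $f_{11}(R_+ + [R,R])$ to the direct sum $f_{11}(\A_+ + [\A,\A]) \oplus f_{11}(\B_+ + [\B,\B])$; but this reduces immediately to the algebra-level identity above. Everything else is a routine verification that a componentwise-defined map on a componentwise-defined algebra with componentwise involution is a homomorphism.
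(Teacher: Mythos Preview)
Your proposal is correct and follows essentially the same approach as the paper: define the componentwise map $\varphi$ on generators and argue it is a Lie algebra isomorphism. The paper simply writes down the same map and asserts it is an isomorphism without further justification, whereas you supply the supporting details (surjectivity on generators, and injectivity via Corollary~\ref{cor:gtnrR} together with the componentwise splitting $R_+ + [R,R] = (\A_+ + [\A,\A]) \oplus (\B_+ + [\B,\B])$). One small remark: rather than defining $\varphi$ on generators and invoking ``extension forced by the generating property'' (which would strictly require a presentation), it is cleaner to observe that your $\varphi$ is the restriction of the obvious associative-algebra isomorphism $M_{2n}(\A\oplus\B)\cong M_{2n}(\A)\oplus M_{2n}(\B)$, which is automatically a Lie algebra map; your use of Corollary~\ref{cor:gtnrR} then checks that this restriction lands in and surjects onto $\gtnr(\A)\oplus\gtnr(\B)$.
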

\begin{proof}
The map $\varphi: \mathfrak{g}_{2n,\rho}(\A \oplus \B) \to
\mathfrak{g}_{2n,\rho}(\A) \oplus \mathfrak{g}_{2n,\rho}(\B)$ given
by
\begin{align*}
& \fij\big( (a,b) \big) \mapsto \big( \fij(a), 0 \big) + \big( 0, \fij(b) \big) \qquad \text{for } i \neq j, \\
& \gij\big( (a,b) \big) \mapsto \big( \gij(a), 0 \big) + \big( 0, \gij(b) \big) \qquad \text{for } i \leq j, \text{ and} \\
& \hij\big( (a,b) \big) \mapsto \big( \hij(a), 0 \big) + \big( 0, \hij(b) \big) \qquad \text{for } i \leq j
\end{align*}
is an isomorphism.
\end{proof}

\begin{lem}\label{lemma:gtnr(Cq/Jfg)}
Let $(q,\theta_1,\theta_2) \in \P$ and $f(T), g(T) \in \C[T]$ with
$\deg f \deg g > 0$, $f(0)g(0) \neq 0$, $(f, f') = 1$, $(g, g') = 1$
and $f,g$ satisfy
\begin{equation*}
f(\theta_1^m T)=  f(T), \; g(\theta_2^m T)=  g(T).
\end{equation*}
Then for $n \geq 2$ we have
\begin{enumerate}
\item If $(q,\theta_1,\theta_2)=(-1,1,1)$ or $(-1,1,-1)$, there is an
isomorphism
$$
\fg_{2n,\rho}(\C_q/J(f, g)) \cong \fg_{4n,\rho}(\C)^{rs}
$$
where $r$ and $s$ are the degree of $f$ and $g$, respectively.
\item If $(q,\theta_1,\theta_2)=(-1,-1,-1)$, there is an
isomorphism
$$
\fg_{2n,\rho}(\C_q/J(f, g)) \cong \fg_{4n,-\rho}(\C)^{rs}
$$
where $r$ and $s$ are the degree of $f$ and $g$, respectively.
\item If $(q,\theta_1,\theta_2)=(1,1,1)$, there is an isomorphism
$$
\fg_{2n,\rho}(\C_q/J(f, g)) \cong \fg_{2n,\rho}(\C)^{rs}
$$
where $r$ and $s$ are the degree of $f$ and $g$, respectively.
\item If $(q,\theta_1,\theta_2)=(1,1,-1)$, there is an isomorphism
$$
\fg_{2n,\rho}(\C_q/J(f, g)) \cong \sl_{2n}(\C)^{rs}
$$
where $r$ and $2s$ are the degree of $f$ and $g$, respectively.
\item If $(q,\theta_1,\theta_2)=(1,-1,-1)$, there is an isomorphism
$$
\fg_{2n,\rho}(\C_q/J(f, g)) \cong \sl_{2n}(\C)^{2rs}
$$
where $2r$ and $2s$ are the degree of $f$ and $g$, respectively.
\end{enumerate}
\end{lem}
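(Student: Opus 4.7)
The plan is to factor the polynomials $f$ and $g$ into coprime pieces compatible with the involutive symmetry, reduce the problem to the model quotients $\Cq/J(\alpha,\beta)$ studied in Lemma \ref{lemma:Cq/Jalphabeta}, and then apply the examples from Section 2 to identify the resulting Lie algebras.

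First, I would use $(f,f')=1$ together with $f(0)\neq 0$ to conclude that $f$ has distinct nonzero roots, and then split on whether $\theta_1^m=1$ or $\theta_1^m=-1$. In the former case, which occurs for every pair $(q,\theta_1)$ appearing in the lemma except $(q,\theta_1)=(1,-1)$, the symmetry constraint $f(\theta_1^m T)=f(T)$ is automatic, and I would write
\[
f(T)=\lambda\prod_{i=1}^{r}(T-\gamma_i),\qquad r=\deg f,
\]
with the $\gamma_i$ distinct and nonzero. In the latter case, $\theta_1^m=-1$ forces the roots to come in distinct pairs $\pm\alpha_i$, yielding
\[
f(T)=\lambda\prod_{i=1}^{r}(T-\alpha_i)(T+\alpha_i),\qquad 2r=\deg f.
\]
An analogous discussion applies to $g$ with $\theta_2$ in place of $\theta_1$, and the five cases of the lemma correspond precisely to the five admissible combinations of these two dichotomies (the combination $\theta_1=-1$, $\theta_2=1$ being excluded after the symmetry-based reduction on $\P$ recorded just before Lemma \ref{lemma:Cq's structure}).

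Next, I would apply Lemma \ref{lemma:splitting Cq/Jf1f2g} iteratively to the coprime factorizations above. Since each factor $\tilde f_i$ still satisfies $\tilde f_i(\theta_1^m T)=\tilde f_i(T)$ (it is linear when $\theta_1^m=1$, and of the form $T^2-\alpha_i^2$ when $\theta_1^m=-1$), and similarly for $\tilde g_j$, the hypotheses of the splitting lemma are met at every step. This yields an isomorphism of involutive associative algebras
\[
\Cq/J(f,g) \ \cong\ \bigoplus_{i,j}\Cq/J(\tilde f_i,\tilde g_j).
\]
A direct comparison of generators with the Definition in Section 3 shows that each summand $\Cq/J(\tilde f_i,\tilde g_j)$ is exactly a model quotient $\Cq/J(\alpha_i,\beta_j)$, whose structure is then read off from Lemma \ref{lemma:Cq/Jalphabeta} as one of $\MtwoC$, $\m2$, $\C$, $\C\oplus\C^{\mathrm{op}}$, or $\C\K$ (with the appropriate involution) according to cases (1)--(5).

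Finally, I would invoke Lemma \ref{lemma:gtnrAoplusB equals gtnrAoplusgtnrB} to distribute $\fg_{2n,\rho}$ over the direct sum, giving
\[
\fg_{2n,\rho}\bigl(\Cq/J(f,g)\bigr)\ \cong\ \bigoplus_{i,j}\fg_{2n,\rho}\bigl(\Cq/J(\alpha_i,\beta_j)\bigr),
\]
and then identify each summand using Examples \ref{eg:gtnrM}, \ref{eg:gtnrtildeM}, \ref{eg:gtnrC}, \ref{eg:gtnrSplusSop}, and \ref{eg:gtnrCK} as $\fg_{4n,\rho}(\C)$, $\fg_{4n,-\rho}(\C)$, $\fg_{2n,\rho}(\C)$, $\sl_{2n}(\C)$, or $\sl_{2n}(\C)\oplus\sl_{2n}(\C)$, respectively. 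Counting summands, and noting that the Klein-four case already contributes two copies of $\sl_{2n}(\C)$ per block, produces the stated exponents $rs$, $rs$, $rs$, $rs$, $2rs$. The main obstacle is bookkeeping rather than a conceptual step: one must verify carefully, for each of the five admissible triples $(q,\theta_1,\theta_2)$, that the factorization into $\tilde f_i$ and $\tilde g_j$ realizes the model ideal $J(\alpha_i,\beta_j)$ exactly as defined in Section 3 (after possibly re-parameterizing $\gamma_i=\alpha_i^2$ when $m=2$), so that Lemma \ref{lemma:Cq/Jalphabeta} applies verbatim and the final counts match.
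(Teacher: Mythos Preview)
Your proposal is correct and follows essentially the same route as the paper: factor $f$ and $g$ into coprime pieces matching the symmetry constraints, apply Lemma~\ref{lemma:splitting Cq/Jf1f2g} iteratively to reduce to the model quotients, identify each via Lemma~\ref{lemma:Cq/Jalphabeta}, then distribute $\fg_{2n,\rho}$ over the direct sum using Lemma~\ref{lemma:gtnrAoplusB equals gtnrAoplusgtnrB} and the Examples in Section~2. The only cosmetic difference is that the paper treats the five cases one by one and, in the $m=2$ cases, introduces square roots $\sqrt{\alpha_i}$ of the roots of $f$ rather than your equivalent reparametrization $\gamma_i=\alpha_i^2$.
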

\begin{proof}
\begin{itemize}
\item[(1)]
Let $(q,\theta_1,\theta_2) = (-1,1,1) \text{ or } (-1,1,-1)$.
The conditions $f(0)g(0)\neq 0$, $\deg f \deg g >0$, and
$(f,f') = (g,g') = 1$ imply that
$f(T) = \lambda_1 (T- \alpha_1) \cdots (T-\alpha_r)$
and
$g(T) = \lambda_2 (T- \beta_1) \cdots (T- \beta_s)$
for some $\lambda_1, \lambda_2 \in \bbC^*$, $r, s \geq 1$,
distinct $\alpha_1, \ldots, \alpha_r \in \bbC^*$,
and distinct $\beta_1, \ldots, \beta_s \in \bbC^*$.

Recall that $\Jfg$ is generated by
\begin{equation*}
\begin{split}
f(\tone^2) = & \lambda_1 (\tone^2 - \alpha_1) \cdots (\tone^2 - \alpha_r) \\
= & \lambda_1 (\tone - \sqrt{\alpha_1} ) ( \tone + \sqrt{\alpha_1} ) \cdots
( \tone - \sqrt{\alpha_r} ) ( \tone + \sqrt{\alpha_r} ),
\end{split}
\end{equation*}
and
\begin{equation*}
\begin{split}
g(\ttwo^2) = & \lambda_2 (\ttwo^2 - \beta_1) \cdots (\ttwo^2 - \beta_s) \\
= & \lambda_2 ( \ttwo - \sqrt{\beta_1} ) ( \ttwo + \sqrt{\beta_1} ) \cdots
( \ttwo - \sqrt{\beta_s} ) ( \ttwo + \sqrt{\beta_s} ).
\end{split}
\end{equation*}
So, by Lemma \ref{lemma:splitting Cq/Jf1f2g} and Lemma \ref{lemma:Cq/Jalphabeta}(1), we have
\begin{align*}
\Cq/\Jfg \cong & \bigoplus_{i=1}^r \bigoplus_{j=1}^s \
\frac{\Cq}{J\left( (\tone - \sqrt{\alpha_i}) (\tone + \sqrt{\alpha_i}),
\ (\ttwo - \sqrt{\beta_j}) (\ttwo + \sqrt{\beta_j}) \right)}, \\ %\text{ by Lemma \ref{lemma:splitting Cq/Jf1f2g}} \\
\cong & \bigoplus_{i=1}^r \bigoplus_{j=1}^s \ \frac{\Cq}{J( \sqrt{\alpha_i},
\ \sqrt{\beta_j} )}, \text{ by defn. of $J( \sqrt{\alpha_i}, \ \sqrt{\beta_j} )$} \\
\cong & \bigoplus_{k=1}^{rs}\  M_2(\bbC). %\text{ by Lemma \ref{lemma:Cq/Jalphabeta}(1)}
\end{align*}
Hence Lemma \ref{lemma:gtnrAoplusB equals gtnrAoplusgtnrB} and Example \ref{eg:gtnrM} imply
\begin{equation*}
\begin{split}
\mathfrak{g}_{2n,\rho} \left( \Cq/\Jfg \right)
\cong  & \mathfrak{g}_{2n,\rho} \left( \bigoplus_{k=1}^{rs} M_2(\bbC) \right)
\cong \bigoplus_{k=1}^{rs} \ \mathfrak{g}_{2n,\rho} \left( M_2(\bbC) \right)  \\
\cong & \bigoplus_{k=1}^{rs} \ \mathfrak{g}_{4n,\rho} (\bbC) = \mathfrak{g}_{4n,\rho} (\bbC)^{\,rs}.
\end{split}
\end{equation*}

\item[(2)]
Let $(q,\theta_1,\theta_2) = (-1,-1,-1)$.
The argument follows exactly the same as in part (1) except that now
$\Cq/\Jfg \cong \tildeMtwoC$, by Lemma \ref{lemma:Cq/Jalphabeta} (2).
So $$\Cq/\Jfg \cong \bigoplus_{k=1}^{rs} \ \tildeMtwoC$$
and by Lemma \ref{lemma:gtnrAoplusB equals gtnrAoplusgtnrB} and
Example \ref{eg:gtnrtildeM}
\begin{equation*}
\begin{split}
\mathfrak{g}_{2n,\rho} \left( \Cq/\Jfg \right)
\cong & \mathfrak{g}_{2n,\rho} \left( \bigoplus_{k=1}^{rs}\  \tildeMtwoC \right)
\cong \bigoplus_{k=1}^{rs} \ \mathfrak{g}_{2n,\rho} (\tildeMtwoC ) \\
\cong & \bigoplus_{k=1}^{rs} \ \mathfrak{g}_{4n,-\rho} (\bbC) = \mathfrak{g}_{4n,-\rho} (\bbC)^{\,rs}.
\end{split}
\end{equation*}

\item[(3)]
Let $(q,\theta_1,\theta_2) = (1,1,1)$.  The conditions $\deg f \deg g >0$,
$f(0)g(0)\neq 0$, and $(f,f') = (g,g') = 1$ imply that
$f(T) = \lambda_1 (T- \alpha_1) \cdots (T-\alpha_r)$
and
$g(T) = \lambda_2 (T- \beta_1) \cdots (T- \beta_s)$ for some
$\lambda_1, \lambda_2 \in \bbC^*$, $r, s \geq 1$, distinct
$\alpha_1, \ldots, \alpha_r \in \bbC^*$, and distinct
$\beta_1, \ldots, \beta_s \in \bbC^*$.
The ideal $\Jfg$ is generated by
$f(\tone)= \lambda_1 (\tone- \alpha_1) \cdots (\tone-\alpha_r)$
and
$g(\ttwo) = \lambda_2 (\ttwo- \beta_1) \cdots (\ttwo- \beta_s)$.
Hence by Lemma \ref{lemma:splitting Cq/Jf1f2g} we have
\begin{equation*}
\Cq/\Jfg \cong \bigoplus_{i=1}^r \bigoplus_{j=1}^s
\ \frac{\Cq}{J\left( \tone - \alpha_i, \ \ttwo - \beta_j \right)}
\cong \bigoplus_{i=1}^r \bigoplus_{j=1}^s \ \bbC
= \bigoplus_{k=1}^{rs} \  \bbC
\end{equation*}
and by Lemma \ref{lemma:gtnrAoplusB equals gtnrAoplusgtnrB}
\begin{equation*}
\mathfrak{g}_{2n,\rho} \left( \Cq/\Jfg \right)
\cong \mathfrak{g}_{2n,\rho} \left( \bigoplus_{k=1}^{rs} \bbC \right)
\cong \bigoplus_{k=1}^{rs} \ \mathfrak{g}_{2n,\rho} \left( \bbC \right)
= \mathfrak{g}_{2n,\rho} (\bbC)^{\,rs}.
\end{equation*}

\item[(4)]
Let $(q,\theta_1,\theta_2) = (1,1,-1)$.
We begin as in part (3) except the condition
$g(\thetatwo^m T) = g\left( (-1)^1 (T) \right) = g(-T)$
imposes the constraint that if $\beta_j$ is a root of $g$,
then so is $-\beta_j$.
So $f(T) = \lambda_1 (T- \alpha_1) \cdots (T-\alpha_r)$ and
$g(T) = \lambda_2 (T- \beta_1)(T + \beta_1) \cdots (T - \beta_s)(T + \beta_s)$
for some $\lambda_1, \lambda_2 \in \bbC^*$, $r, s \geq 1$, distinct
$\alpha_1, \ldots, \alpha_r \in \bbC^*$, and distinct $\beta_1, \ldots, \beta_s \in \bbC^*$.

Since $\Jfg$ is generated by $f(\tone)$ and $g(\ttwo)$,
\begin{align*}
\Cq/\Jfg & \cong \bigoplus_{i=1}^r \bigoplus_{j=1}^s
\ \frac{\Cq}{J\left( \tone - \alpha_i, \ (\ttwo - \beta_j) (\ttwo + \beta_j) \right)} \\
%, \text{ by Lemma \ref{lemma:splitting Cq/Jf1f2g}} \\
& \cong \bigoplus_{i=1}^r \bigoplus_{j=1}^s \ \frac{\Cq}{J( \alpha_i, \ \beta_j )}
%, \text{ by defn. of $J( \alpha_i, \ \beta_j )$ when $(q,\thetaone,\thetatwo) = (1,1,-1)$} \\
 \cong \bigoplus_{i=1}^r \bigoplus_{j=1}^s \ \left( \bbC \oplus \bbC^{\text{op}} \right)  \\
%, \text{ by Lemma \ref{lemma:Cq/Jalphabeta}(4)} \\
& \cong \bigoplus_{k=1}^{rs} \ \left( \bbC \oplus \bbC^{\text{op}} \right),
\end{align*}
and by Lemma \ref{lemma:gtnrAoplusB equals gtnrAoplusgtnrB}
and Example \ref{eg:gtnrSplusSop} we arrive
\begin{equation*}
\begin{split}
\mathfrak{g}_{2n,\rho} \left( \Cq/\Jfg \right)
\cong & \mathfrak{g}_{2n,\rho} \left( \bigoplus_{k=1}^{rs} \left( \bbC \oplus \bbC^{\text{op}} \right) \right)
\cong \bigoplus_{k=1}^{rs} \ \mathfrak{g}_{2n,\rho} \left( \bbC \oplus \bbC^{\text{op}} \right) \\
\cong & \bigoplus_{k=1}^{rs} \ \sl_{2n}(\bbC)
= \sl_{2n}(\bbC)^{\,rs}.
\end{split}
\end{equation*}

\item[(5)]
Let $(q,\thetaone,\thetatwo)=(1,-1,-1)$.
Now, as in part (4), $f(T) = \lambda_1 (T- \alpha_1)(T + \alpha_1) \cdots
(T-\alpha_r)(T + \alpha_r)$ and $g(T) = \lambda_2 (T- \beta_1)(T + \beta_1)
\cdots (T - \beta_s)(T + \beta_s)$ for some $\lambda_1, \lambda_2 \in \bbC^*$,
$r, s \geq 1$, distinct $\alpha_1, \ldots, \alpha_r \in \bbC^*$,
and distinct $\beta_1, \ldots, \beta_s \in \bbC^*$.
So,
\begin{align*}
\Cq/\Jfg
& \cong \bigoplus_{i=1}^r \bigoplus_{j=1}^s \ \frac{\Cq}{J\left( (\tone - \alpha_i)(\tone + \alpha_i),
\ (\ttwo - \beta_j) (\ttwo + \beta_j) \right)}  \\
% , \text{ by Lemma \ref{lemma:splitting Cq/Jf1f2g}} \\
& \cong \bigoplus_{i=1}^r \bigoplus_{j=1}^s \ \frac{\Cq}{J( \alpha_i, \ \beta_j )}
% , \text{ by defn. of $J( \alpha_i, \ \beta_j )$ when $(q,\thetaone,\thetatwo) = (1,-1,-1)$} \\
\cong \bigoplus_{i=1}^r \bigoplus_{j=1}^s \ \bbC\K  \\
% , \text{ by Lemma \ref{lemma:Cq/Jalphabeta}(5)} \\
& \cong \bigoplus_{k=1}^{rs} \ \bbC\K ,
\end{align*}
and by Lemma \ref{lemma:gtnrAoplusB equals gtnrAoplusgtnrB}
and Example \ref{eg:gtnrCK} we get
\begin{equation*}
\begin{split}
\mathfrak{g}_{2n,\rho} \left( \Cq/\Jfg \right)
\cong & \mathfrak{g}_{2n,\rho} \left( \bigoplus_{k=1}^{rs} \ \bbC\K \right)
\cong \bigoplus_{k=1}^{rs} \ \mathfrak{g}_{2n,\rho} \left( \bbC\K \right) \\
\cong & \bigoplus_{k=1}^{rs} \ \left( \sl_{2n}(\bbC) \oplus \sl_{2n}(\bbC) \right)
= \sl_{2n}(\bbC)^{\,2rs}.  \qedhere
\end{split}
\end{equation*}
\end{itemize}
\end{proof}

\begin{rem} \label{remarkforLemma}
If we denote the elementary quantum torus with above involution by $\C_{q,\thetaone,\thetatwo}$,
then Lemma 3.2.12 in \cite{Y1} implies that $\C_{1,-1,1} \cong \C_{1,1,-1} \cong \C_{1,-1,-1}$
and $\C_{-1,1,1} \cong \C_{-1,1,-1} \cong \C_{-1,-1,1}$.
\end{rem}

\begin{lem}\label{lemma:gtnrJf0g0 contains gtnrJfg}
Suppose $n \geq 3$. Let $(q,\theta_1,\theta_2) \in \P$ and $f(T),
g(T) \in \C[T]$ with $\deg f \deg g > 0$, $f(0)g(0) \neq 0$ and
$f,g$ satisfy
\begin{equation*}
f(\theta_1^m T)=  f(T), \; g(\theta_2^m T)=  g(T).
\end{equation*}
Let $f_0=f/(f,f')$ and $g_0=g/(g,g')$. Then
$$
\underline{\fg_{2n,\rho}}(J(f_0, g_0)) =\overline{\fg_{2n,\rho}}
(J(f_0, g_0)) \supset \overline{\fg_{2n,\rho}}(J(f, g)) =
\underline{\fg_{2n,\rho}}(J(f, g)).
$$
and $\overline{\fg_{2n,\rho}}(J(f_0, g_0))/ \overline{\fg_{2n,\rho}}
(J(f, g))$ is the radical of $\fg_{2n,\rho}(\C_q)
/\overline{\fg_{2n,\rho}} (J(f, g))$.
\end{lem}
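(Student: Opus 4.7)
The plan is to prove the statement in three stages: the inclusion, the two equalities, and finally the identification of the radical.

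For the inclusion, observe that since $f_0 = f/(f,f')$ and $g_0 = g/(g,g')$, we have $f = f_0 \cdot (f,f')$ and $g = g_0 \cdot (g,g')$ in $\C[T]$. Therefore $f(t_1^m) = f_0(t_1^m) \cdot (f,f')(t_1^m)$ lies in $J(f_0,g_0)$, and similarly for $g(t_2^m)$. Hence $J(f,g) \subset J(f_0,g_0)$, which immediately yields $\overline{\fg_{2n,\rho}}(J(f,g)) \subset \overline{\fg_{2n,\rho}}(J(f_0,g_0))$. The two equalities $\underline{\fg_{2n,\rho}} = \overline{\fg_{2n,\rho}}$ at $J(f,g)$ and $J(f_0,g_0)$ follow directly from Lemma \ref{lemma:ovlgtnrJfg=undrlgtnrJfg}, provided both ideals are involutive. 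This requires verifying $f_0(\theta_1^m T) = f_0(T)$ and $g_0(\theta_2^m T) = g_0(T)$; the only non-trivial case is $\theta_i^m = -1$, where $f$ is even, and a short check shows that $(f,f')$ is then also even, so $f_0$ inherits the symmetry.

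For the radical assertion, combine the third isomorphism theorem with Proposition \ref{thm:gtnrR/ovlgtnrJ cong gtnr(R/J)} to obtain
\[
\frac{\fg_{2n,\rho}(\C_q)/\overline{\fg_{2n,\rho}}(J(f,g))}{\overline{\fg_{2n,\rho}}(J(f_0,g_0))/\overline{\fg_{2n,\rho}}(J(f,g))} \ \cong \ \fg_{2n,\rho}(\C_q/J(f_0,g_0)),
\]
which is semisimple by Lemma \ref{lemma:gtnr(Cq/Jfg)}, since $(f_0,f_0')=1$ and $(g_0,g_0')=1$ by construction. Hence it suffices to show that the ideal $\overline{\fg_{2n,\rho}}(J(f_0,g_0))/\overline{\fg_{2n,\rho}}(J(f,g))$ is solvable; together with the semisimplicity of the quotient, this forces it to equal the radical.

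For solvability, I would first show that $I := J(f_0,g_0)/J(f,g)$ is a \emph{nilpotent} associative ideal of $\C_q/J(f,g)$. Choose $N$ to be the largest multiplicity of any root of $f$ or $g$, so that $f \mid f_0^{\,N}$ and $g \mid g_0^{\,N}$ in $\C[T]$. Since $f_0(t_1^m)$ and $g_0(t_2^m)$ lie in the centre of $\C_q$ by Lemma \ref{lemma:Cq's structure}, an arbitrary product of $2N$ elements of the form $a f_0(t_1^m) + b g_0(t_2^m)$ expands into a sum of monomials each of which, by the pigeonhole principle, contains either $f_0(t_1^m)^N$ or $g_0(t_2^m)^N$ as a central factor; both lie in $J(f,g)$. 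Hence $I^{2N} = 0$. Translating to the Lie algebra, the image of $\overline{\fg_{2n,\rho}}(J(f_0,g_0))$ in $\fg_{2n,\rho}(\C_q/J(f,g))$ consists of matrices whose entries lie in $I$, and the Lie bracket sends matrices with entries in $I^a$ and $I^b$ to matrices with entries in $I^{a+b}$. Consequently the derived series terminates after finitely many steps, giving solvability.

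The main obstacle is the solvability step: carefully tracking how the Lie bracket multiplies the associative powers of the entries, and turning nilpotence of the associative ideal $I$ into solvability (in fact nilpotence) of the corresponding Lie ideal. The symmetry verification for $f_0$ and $g_0$ is a preliminary but indispensable check that ensures $J(f_0,g_0)$ is genuinely an involutive ideal so that Lemma \ref{lemma:ovlgtnrJfg=undrlgtnrJfg} and Lemma \ref{lemma:gtnr(Cq/Jfg)} may be invoked.
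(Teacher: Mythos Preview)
Your proposal is correct and follows essentially the same route as the paper: verify the symmetry of $f_0,g_0$ so that Lemma~\ref{lemma:Jfg closed under involution} and Lemma~\ref{lemma:ovlgtnrJfg=undrlgtnrJfg} apply, use Proposition~\ref{thm:gtnrR/ovlgtnrJ cong gtnr(R/J)} together with Lemma~\ref{lemma:gtnr(Cq/Jfg)} to see that the quotient by $\overline{\fg_{2n,\rho}}(J(f_0,g_0))$ is semisimple, and then show the intermediate ideal is nilpotent by exploiting that $f\mid f_0^{\,N}$ and $g\mid g_0^{\,N}$. The only cosmetic difference is that the paper tracks the lower central series $\overline{\fg_{2n,\rho}}(J(f_0,g_0))^k \subset \overline{\fg_{2n,\rho}}(J(f_0,g_0)^k)$ directly in $\fg_{2n,\rho}(\C_q)$, whereas you pass to the quotient ring, prove the associative ideal $I$ is nilpotent, and then read off Lie nilpotence from the matrix entries; the two formulations are interchangeable.
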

\begin{proof}
The assumptions $f(T) = f(\thetaone^m T)$, $g(T) = g(\thetatwo^m
T)$, $\fzero = f/(f,f')$, and $\gzero = g/(g,g')$ imply that
$\fzero(\thetaone^m T) = \fzero(T)$ and $\gzero(\thetatwo^m T) =
\gzero(T)$ (see Remark \ref{remark:meaning of f(thetamT)=f(T)}).
So, by Lemma \ref{lemma:Jfg closed under involution} and Lemma
\ref{lemma:ovlgtnrJfg=undrlgtnrJfg}, we have
$\ovlgtnrJfg = \undrlgtnrJfg$ and $\ovlgtnrJfzgz =\undrlgtnrJfzgz$.
Moreover, the assumption implies $\Jfg \subset \Jfzgz$.  So we have
\begin{equation*}
\ovlgtnrJfg = \undrlgtnrJfg \subset \undrlgtnrJfzgz = \ovlgtnrJfzgz.
\end{equation*}
Hence, by Proposition \ref{thm:gtnrR/ovlgtnrJ cong gtnr(R/J)},
\begin{equation*}
\frac{ \gtnr(\Cq)/\ovlgtnrJfg }{ \ovlgtnrJfzgz/\ovlgtnrJfg }
 \cong \gtnr(\Cq)/\ovlgtnrJfzgz \cong \gtnr\left( \Cq/\Jfzgz \right).
\end{equation*}
We worked out the various possibilities for $\gtnr\left(\Cq/\Jfzgz
\right)$ in Lemma \ref{lemma:gtnr(Cq/Jfg)}, all of which
are semisimple.  So $\ovlgtnrJfzgz/\ovlgtnrJfg
\supset \calR $, where $\calR$ denotes the radical of
$\gtnr(\Cq)/\ovlgtnrJfg$.

Let us show that $\ovlgtnrJfzgz/\ovlgtnrJfg$ is, in fact, equal to
$\calR$.  We do this by showing that $\ovlgtnrJfzgz/\ovlgtnrJfg$ is
nilpotent and, hence, solvable.  Let
\begin{align*}
\ovlgtnrJfzgz^1 & = \ovlgtnrJfzgz, \\
\ovlgtnrJfzgz^k & = \left[ \ovlgtnrJfzgz^{k-1}, \ \ovlgtnrJfzgz \right], \ \text{ for } k \geq 2.
\end{align*}
It is easy to check that
\begin{equation*}
\ovlgtnrJfzgz^k \subset \overline{\gtnr}\left( \Jfzgz^k \right), \ \text{ for } k \geq 1,
\end{equation*}
where $\Jfzgz^1 = \Jfzgz$ and $\Jfzgz^k = \Jfzgz^{k-1} \cdot \Jfzgz$
for $k \geq 2$. By induction, the
elements of $\Jfzgz^k$ can be written as
$\sum\limits_{i=0}^k a_i \fzero(\tone^m)^i \gzero(\ttwo^m)^{k-i}$, for
$a_0, a_1, \ldots, a_k \in \Cq$.  By the definition of $\fzero$ and
$\gzero$, there exist positive integers $k_1$ and $k_2$ such that
$f|\fzero^{k_1}$ and $g|\gzero^{k_2}$.  Take $k > k_1 + k_2$ we have
    \begin{equation*}
        \Jfzgz^k \subset \Jfg.
    \end{equation*}
It follows that for $k$ large enough,
    \begin{equation*}
        \ovlgtnrJfzgz^k \subset \ovlgtnrJfg.
    \end{equation*}
Hence $\ovlgtnrJfzgz/\ovlgtnrJfg$ is nilpotent.
\end{proof}

\begin{thm}\label{thm:calJ contains ovlgtnrJfg}
Suppose $n \geq 3$ and $(q,\theta_1,\theta_2) \in \P$. Let $\J$ be
an ideal of $\fg_{2n,\rho}(\C_q)$ such that $\fg_{2n,\rho}(\C_q)/\J$
is finite-dimensional and semisimple. Then there exist $f, g \in
\C[T]$ with $f(0)g(0) \neq 0$, $\deg f \deg g > 0$, $(f, f') = 1$,
$(g, g') = 1$ and
\begin{equation*}
f(\theta_1^m T)=  f(T), \; g(\theta_2^m T)=  g(T),
\end{equation*}
such that
$$
\J \supset \overline{\fg_{2n,\rho}}(J(f,g))
=\underline{\fg_{2n,\rho}}(J(f, g)).
$$
\end{thm}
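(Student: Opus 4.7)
The plan is to extract an involutive ideal $J$ of $\C_q$ from $\J$, construct polynomials $\tilde{f},\tilde{g}$ from minimal polynomials in the finite-dimensional quotient $\C_q/J$, and finally cut down to squarefree $f,g$ using the semisimplicity of $\fg_{2n,\rho}(\C_q)/\J$.

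First I would apply Theorem \ref{thm:ideal calJ gives ideal J} (which requires $n\ge 3$) to $\J$ to obtain an involutive ideal $J$ of $\C_q$ with $\J\supset\underline{\fg_{2n,\rho}}(J)$. Next, the $\C$-linear map $\varphi:\C_q\to\fg_{2n,\rho}(\C_q)/\J$ given by $a\mapsto f_{12}(a)+\J$ has kernel exactly $\{a\in\C_q:f_{12}(a)\in\J\}=J$ by Theorem \ref{thm:ideal calJ gives ideal J}(1). Since $\fg_{2n,\rho}(\C_q)/\J$ is finite-dimensional, so is $\mathrm{Im}(\varphi)\cong\C_q/J$. (If $J=\C_q$, then $\J=\fg_{2n,\rho}(\C_q)$ and the statement is trivial.)

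In the finite-dimensional algebra $\C_q/J$, let $\tilde{f}(T),\tilde{g}(T)\in\C[T]$ be the monic minimal polynomials of $t_1^m+J$ and $t_2^m+J$. Since $t_1^m$ and $t_2^m$ are invertible in $\C_q$ and hence in $\C_q/J$, I get $\tilde{f}(0)\tilde{g}(0)\ne 0$ and $\deg\tilde{f}\,\deg\tilde{g}>0$. To verify $\tilde{f}(\theta_1^m T)=\tilde{f}(T)$: applying the involution (which descends to $\C_q/J$) to $\tilde{f}(t_1^m)\in J$ yields $\tilde{f}(\theta_1^m t_1^m)\in J$, so $\tilde{f}(\theta_1^m T)$ also annihilates $t_1^m+J$; by minimality and equal degree, $\tilde{f}(\theta_1^m T)=c\,\tilde{f}(T)$ with $c=\pm 1$, and matching the nonzero constant terms forces $c=1$. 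The argument for $\tilde{g}$ is identical. Consequently $\tilde{f}(t_1^m),\tilde{g}(t_2^m)\in J$, hence $J(\tilde{f},\tilde{g})\subset J$, and by Lemma \ref{lemma:ovlgtnrJfg=undrlgtnrJfg} together with monotonicity of $\underline{\fg_{2n,\rho}}$,
$$
\overline{\fg_{2n,\rho}}(J(\tilde{f},\tilde{g}))=\underline{\fg_{2n,\rho}}(J(\tilde{f},\tilde{g}))\subset\underline{\fg_{2n,\rho}}(J)\subset\J.
$$

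To obtain squarefree polynomials, set $f=\tilde{f}/(\tilde{f},\tilde{f}')$ and $g=\tilde{g}/(\tilde{g},\tilde{g}')$. These still satisfy $f(0)g(0)\ne 0$ and have positive degree, and they inherit the symmetry because the roots of $\tilde{f}$ fall into $\theta_1^m$-orbits of equal multiplicity so the squarefree part preserves the orbit structure. Finally, by Lemma \ref{lemma:gtnrJf0g0 contains gtnrJfg}, the radical of the finite-dimensional Lie algebra $L:=\fg_{2n,\rho}(\C_q)/\overline{\fg_{2n,\rho}}(J(\tilde{f},\tilde{g}))$ equals $\overline{\fg_{2n,\rho}}(J(f,g))/\overline{\fg_{2n,\rho}}(J(\tilde{f},\tilde{g}))$. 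Since $L/(\J/\overline{\fg_{2n,\rho}}(J(\tilde{f},\tilde{g})))\cong\fg_{2n,\rho}(\C_q)/\J$ is semisimple, this radical must be contained in $\J/\overline{\fg_{2n,\rho}}(J(\tilde{f},\tilde{g}))$; pulling back yields $\J\supset\overline{\fg_{2n,\rho}}(J(f,g))$, as required. The main obstacle is this last step: closing the gap between $\tilde{f},\tilde{g}$ (which may have multiple roots) and their squarefree parts $f,g$; this is only possible because the semisimplicity hypothesis forces the identification of the nilpotent/radical piece via Lemma \ref{lemma:gtnrJf0g0 contains gtnrJfg}.
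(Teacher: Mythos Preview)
Your proposal is correct and follows essentially the same route as the paper: extract the involutive ideal $J$ via Theorem~\ref{thm:ideal calJ gives ideal J}, use finite-dimensionality of $\C_q/J$ to produce $\tilde f,\tilde g$ (your minimal-polynomial phrasing is equivalent to the paper's linear-independence argument), verify the $\theta_i^m$-symmetry, pass to squarefree parts $f,g$, and invoke Lemma~\ref{lemma:gtnrJf0g0 contains gtnrJfg} together with semisimplicity to conclude. The only cosmetic difference is that the paper deduces $\tilde f(\theta_1^mT)=\tilde f(T)$ by showing each coefficient with $\theta_1^{im}\neq 1$ must vanish, whereas you argue via minimality and constant-term matching; both are valid.
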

\begin{proof}
Let $J$ be the involutive ideal of $\Cq$ corresponding to $\calJ$ as
in Theorem \ref{thm:ideal calJ gives ideal J}.  Since
$\gtnr(\Cq)/\calJ$ is finite-dimensional and semisimple,
$J \neq \Cq$, and $\Cq/J$ is finite-dimensional.  Since $1 \notin
J$, there exists an integer $r \geq 1$ such that
\begin{equation}\label{eqn:star1}
1+J, \ \tone^m + J, \ \tone^{2m} + J, \ \ldots, \ \tone^{(r-1)m} + J
\end{equation}
is linearly independent in $\Cq/J$ but
\begin{equation*}
1+J, \ \tone^m + J, \ \tone^{2m} + J, \ \ldots, \  \tone^{rm} + J
\end{equation*}
is linearly dependent.  That is
%$\lambda_0, \lambda_1, \ldots, \lambda_{r-1} \in \bbC$ such that
\begin{equation}\label{eqn:star2}
\lambda_0 + \lambda_1 \tone^m + \cdots + \lambda_r \tone^{rm} \in J.
\end{equation}
for some $\lambda_0, \lambda_1, \ldots, \lambda_r \in \bbC$ with $\lambda_r \neq 0$.
We must have that $\lambda_0 \neq 0$ for if it were then (\ref{eqn:star2})
would imply that $\lambda_1 \tone^m + \cdots + \lambda_r \tone^{rm} \in J$ and hence
\begin{equation*}
\tone^{-m}\left( \lambda_1 \tone^m + \lambda_2 \tone^{2m} + \cdots +
\lambda_r \tone^{rm} \right) = \lambda_1 + \lambda_2 \tone^m + \cdots + \lambda_r \tone^{(r-1)m},
\end{equation*}
lies in $J$. Contradiction.  Similarly there is a positive integer $s$
such that $1+J, \ \ttwo^m + J, \ \ttwo^{2m} + J, \ \ldots, \
\ttwo^{(s-1)m} + J$ are linearly independent, and there exist
$\mu_0, \mu_1, \ldots, \mu_s$ with $\mu_0 \mu_s\neq 0$ such that
\begin{equation*}
\mu_0 + \mu_1 \ttwo^m + \cdots +\mu_s \ttwo^{sm} \in J.
\end{equation*}
Let
\begin{equation*}
\tildef(T) = \lambda_0 + \lambda_1 T + \cdots + \lambda_r T^r \; \text{ and }
\; \tildeg(T) = \mu_0 + \mu_1 T + \cdots +\mu_s T^s.
\end{equation*}
Since $\tildef(\tone^m)$ and $\overline{ \tildef(\tone^m) } =
\tildef\left( \overline{\tone}^{\,m} \right) = \tildef\left(
\theta_1^m \tone^m \right)$ both lie in $J$, so does their
difference
\begin{equation*}
\overline{ \tildef(\tone^m) } \, - \, \tildef(\tone^m)
\ = \ \tildef\left( \theta_1^m \tone^m \right) \, - \,
\tildef(\tone^m) \ = \ \sum_{i=1}^r \lambda_i (\thetaone^{im} - 1) \tone^{im}.
\end{equation*}
This implies that $\thetaone^{im} = 1$ whenever $\lambda_i \neq 0$.
Hence $\tildef\left( \thetaone^m T \right) = \tildef(T)$. Similarly
$\tildeg\left( \thetatwo^m T \right) = \tildeg(T)$.  By Theorem
\ref{thm:ideal calJ gives ideal J}, Lemma \ref{lemma:Jfg closed
under involution}, and Lemma \ref{lemma:ovlgtnrJfg=undrlgtnrJfg}, we
get
\begin{equation*}
\calJ \ \supset \ \undrlgtnrJ \ \supset \ \undrlgtnrJtftg \ = \ \ovlgtnrJtftg.
\end{equation*}

Let $f = \tildef/(\tildef, \tildef')$ and $g = \tildeg/(\tildeg, \tildeg')$.
Then $f(0)g(0) \neq 0$, $\deg (f) \deg (g) > 0$, $(f,f') = (g,g')= 1$,
$f(\thetaone^m T) = f(T)$, and $g(\thetatwo^m T) = g(T)$.  By Lemma
\ref{lemma:gtnrJf0g0 contains gtnrJfg}, $\ovlgtnrJfg/\ovlgtnrJtftg$
is the radical of $\gtnr(\Cq)/\ovlgtnrJtftg$.  On the other hand,
the semisimplicity of
\begin{equation*}
\gtnr(\Cq)/\calJ \ \cong \ \frac{ \gtnr(\Cq)/\ovlgtnrJtftg }{ \calJ/\ovlgtnrJtftg }
\end{equation*}
implies that
\begin{equation*}
\calJ/\ovlgtnrJtftg \ \supset \ \ovlgtnrJfg/\ovlgtnrJtftg.
\end{equation*}
That is, $\calJ \supset \ovlgtnrJfg$.
\end{proof}

\section{Finite dimensional irreducible representations of $\fg_{2n,\rho}(\C_q)$}

Let $V$ be a finite-dimensional irreducible module over
$\fg_{2n,\rho}(\C_q)$. Then ${\rm Ann}(V) = \{x \in
\fg_{2n,\rho}(\C_q) |x.V = 0\}$ is an ideal of $\fg_{2n,\rho}(\C_q)$
and $V$ can be regarded as a faithful irreducible representation of
$\fg_{2n,\rho}(\C_q)/{\rm Ann}(V)$. And there is a monomorphism of
Lie algebras $\fg_{2n,\rho}(\C_q)/{\rm Ann}(V) \hookrightarrow
\gl(V )$, i.e., $\fg_{2n,\rho}(\C_q)/{\rm Ann}(V)$ can be regarded
as a subalgebra of $\gl(V)$. Moreover, by Proposition
\ref{propn:gtnrR is perfect}, we have that
$$
\fg_{2n,\rho}(\C_q)/{\rm Ann}(V)=[\fg_{2n,\rho}(\C_q)/{\rm
Ann}(V),\fg_{2n,\rho}(\C_q)/{\rm Ann}(V)] \hookrightarrow
\sl(V), \text{ for } n \geq 3.
$$

\begin{thm}
Let $n \geq 3$. If $V$ is an irreducible representation of
$\fg_{2n,\rho}(\C_q)$ and $\dim V \leq 1$, then
$\fg_{2n,\rho}(\C_q)$ acts trivially on $V$.
\end{thm}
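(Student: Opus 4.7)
The plan is to combine the perfectness of $\fg_{2n,\rho}(\C_q)$ (Proposition \ref{propn:gtnrR is perfect}) with the fact that $\gl(V)$ is abelian when $\dim V \leq 1$.

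First, I would dispose of the trivial case $\dim V = 0$ (in which every element of $\fg_{2n,\rho}(\C_q)$ must act as $0$). So assume $\dim V = 1$. Then $\gl(V) \cong \C$ is an abelian Lie algebra, and the representation map $\pi : \fg_{2n,\rho}(\C_q) \to \gl(V)$ is a Lie algebra homomorphism into an abelian target. Consequently, for every $x, y \in \fg_{2n,\rho}(\C_q)$,
\begin{equation*}
\pi([x,y]) = [\pi(x), \pi(y)] = 0,
\end{equation*}
so the commutator subalgebra $[\fg_{2n,\rho}(\C_q), \fg_{2n,\rho}(\C_q)]$ lies entirely in $\ker \pi$.

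Next, I would invoke Proposition \ref{propn:gtnrR is perfect}, which (because $n \geq 3$) gives
\begin{equation*}
\fg_{2n,\rho}(\C_q) = [\fg_{2n,\rho}(\C_q), \fg_{2n,\rho}(\C_q)].
\end{equation*}
Combined with the previous step, this forces $\fg_{2n,\rho}(\C_q) \subset \ker \pi$, i.e., the entire Lie algebra acts as zero on $V$.

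There is no real obstacle here; the statement is essentially a direct consequence of perfectness and the abelianness of $\gl_1(\C)$. The only subtlety worth flagging is the hypothesis $n \geq 3$, which is precisely what is needed to apply the perfectness result.
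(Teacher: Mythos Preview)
Your proof is correct and essentially the same as the paper's: both use Proposition~\ref{propn:gtnrR is perfect} (perfectness for $n\geq 3$) together with the fact that $\gl(V)$ has trivial derived algebra when $\dim V\leq 1$. The paper phrases it slightly more tersely by noting that perfectness forces the image of the representation into $\sl(V)$, which is $\{0\}$ in this case.
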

\begin{proof}
Since $\gtnr(\Cq)$ is perfect, the Lie algebra homomorphism
corresponding to the given representation is into $\sl(V)$.
But $\sl(V) = \{0\}$ when $\dim V \leq 1$.   Hence
$\gtnr(\Cq)$ acts trivially on $V$.
\end{proof}

\begin{lem}[Proposition 19.1 in \cite{H}] \label{lemma:Humphreys proposition}
Let $\fg \subset \sl(V)$ ($V$ finite-dimensional) be a nonzero
Lie algebra acting irreducibly on $V$. Then $\fg$ is semisimple.
\qed
\end{lem}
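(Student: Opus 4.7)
The plan is to reduce the problem to showing the solvable radical of $\fg$ vanishes. Let $S$ denote the solvable radical of $\fg$. Since we are working over $\C$ (an algebraically closed field of characteristic zero) and $V$ is finite-dimensional, Lie's theorem guarantees that the solvable subalgebra $S$ has a common eigenvector on $V$; that is, there is a linear functional $\lambda : S \to \C$ and a nonzero $v_0 \in V$ with $s.v_0 = \lambda(s) v_0$ for every $s \in S$.

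Next, I would consider the weight space
\begin{equation*}
V_\lambda = \{ v \in V \mid s.v = \lambda(s) v \text{ for all } s \in S \},
\end{equation*}
which is nonzero by the previous step. The crucial point is that $V_\lambda$ is stable under all of $\fg$. To verify this, take $x \in \fg$ and $v \in V_\lambda$, and consider the chain $v, x.v, x^2.v, \ldots$; one shows inductively (using $[\fg,S] \subset S$ and a standard trace argument applied to the finite-dimensional span of the $x^i.v$) that $s.(x.v) = \lambda(s)(x.v)$ for all $s \in S$, and along the way that $\lambda([x,s]) = 0$. This is essentially the ``invariance lemma'' used in the standard proof of Lie's theorem, and this step will be the main technical obstacle of the argument.

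Once $V_\lambda$ is known to be a nonzero $\fg$-submodule of $V$, irreducibility forces $V_\lambda = V$, so every $s \in S$ acts on $V$ as the scalar $\lambda(s) \cdot \mathrm{id}_V$. But the hypothesis $\fg \subset \sl(V)$ gives $\tr(s) = 0$, hence $\lambda(s) \dim V = 0$ and therefore $\lambda(s) = 0$ for every $s \in S$. Thus $S$ acts as zero on $V$.

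Finally, since the inclusion $\fg \hookrightarrow \sl(V) \subset \gl(V)$ is by definition faithful, having $S$ annihilate $V$ forces $S = 0$. Therefore $\fg$ has trivial solvable radical, i.e., $\fg$ is semisimple, completing the proof.
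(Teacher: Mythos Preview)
Your proof is correct. Note, however, that the paper does not supply its own argument for this lemma: it simply quotes Proposition~19.1 of Humphreys and places a \qedsymbol, treating the result as a black box from the literature. What you have written is essentially the standard proof one finds in Humphreys (the invariance-lemma argument showing that the radical acts by scalars on an irreducible module, followed by the trace-zero constraint from $\fg \subset \sl(V)$), so there is nothing to contrast---you have filled in exactly the argument the paper chose to cite rather than reproduce.
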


\begin{thm}
Let $n \geq 3$. $V$ is a finite-dimensional irreducible
representation of $\fg_{2n,\rho}(\C_q)$ and $\dim V \geq 2$.
\begin{enumerate}
\item
If $(q,\theta_1,\theta_2) \in \P$ with $q=-1$ or $(q,\theta_1,\theta_2)=(1,1,1)$,
then there exists $k \geq 0$ such that
\begin{equation*}
\fg_{2n,\rho}(\C_q)/{\rm Ann}(V) \cong \fg_{2mn,\theta_1\rho}(\C)^{k}
\end{equation*}
where $m$ is the order of $q$ as a primitive root of unit.
\item
If $(q,\theta_1,\theta_2)=(1,1,-1)$ or $(1,-1,-1)$, then there exists $k \geq 0$ such that
\begin{equation*}
\fg_{2n,\rho}(\C_q)/{\rm Ann}(V) \cong \sl_{2n}(\C)^k.
\end{equation*}
\end{enumerate}
%V can always be regarded as a finite-dimensional faithful irreducible
%representation of $\fg_{2n,\rho}(\C_q)/{\rm Ann}(V)$.
\end{thm}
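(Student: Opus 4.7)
The plan is to assemble the structural results from Sections 2--4. Since $\gtnr(\Cq)$ is perfect for $n \geq 3$ by Proposition \ref{propn:gtnrR is perfect}, its image in $\gl(V)$ actually lies in $\sl(V)$; and because $\dim V \geq 2$ and $V$ is irreducible (hence nontrivial), this image must be nonzero. The Humphreys result (Lemma \ref{lemma:Humphreys proposition}) then forces $\gtnr(\Cq)/\AnnV$ to be a finite-dimensional semisimple Lie algebra.

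Second, I would apply Theorem \ref{thm:calJ contains ovlgtnrJfg} with $\J = \AnnV$ to produce polynomials $f,g \in \C[T]$ satisfying the hypotheses of Lemma \ref{lemma:gtnr(Cq/Jfg)} such that $\AnnV \supset \ovlgtnrJfg$. Consequently $\gtnr(\Cq)/\AnnV$ is a quotient of $\gtnr(\Cq)/\ovlgtnrJfg$, which by Proposition \ref{thm:gtnrR/ovlgtnrJ cong gtnr(R/J)} is isomorphic to $\gtnr(\Cq/J(f,g))$.

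Third, Lemma \ref{lemma:gtnr(Cq/Jfg)} writes $\gtnr(\Cq/J(f,g))$ explicitly as a direct sum of finitely many copies of a single simple Lie algebra, depending on $(q,\theta_1,\theta_2) \in \P$. In the subcases falling under part (1), the simple summand is $\fg_{4n,\rho}(\C)$ for $(q,\theta_1,\theta_2) = (-1,1,\pm 1)$, $\fg_{4n,-\rho}(\C)$ for $(-1,-1,-1)$, and $\fg_{2n,\rho}(\C)$ for $(1,1,1)$; these combine uniformly into $\fg_{2mn,\theta_1\rho}(\C)$ after noting that $m = 2$ when $q = -1$ and $m = 1$ when $q = 1$. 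In the subcases of part (2), the summand is $\sl_{2n}(\C)$, appearing with multiplicity $rs$ or $2rs$.

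Finally, any ideal of $\fs^N$ (for $\fs$ simple) is a direct sum of coordinate copies, so the quotient of such an algebra by any ideal is isomorphic to $\fs^k$ for some $0 \leq k \leq N$. Applying this to the surjection $\gtnr(\Cq)/\ovlgtnrJfg \twoheadrightarrow \gtnr(\Cq)/\AnnV$ yields exactly the two isomorphisms asserted. The main obstacle is the case-by-case tabulation that identifies the four subcases of part (1) with the single unified formula $\fg_{2mn,\theta_1\rho}(\C)$; this amounts to checking four triples $(q,\theta_1,\theta_2)$ against the corresponding values of $m$ and $\theta_1\rho$ and is an elementary verification.
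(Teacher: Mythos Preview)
Your proposal is correct and follows essentially the same route as the paper's proof: perfectness plus Lemma \ref{lemma:Humphreys proposition} gives semisimplicity of $\gtnr(\Cq)/\AnnV$, Theorem \ref{thm:calJ contains ovlgtnrJfg} produces the polynomials $f,g$, Proposition \ref{thm:gtnrR/ovlgtnrJ cong gtnr(R/J)} identifies the quotient with $\gtnr(\Cq/\Jfg)$, and Lemma \ref{lemma:gtnr(Cq/Jfg)} supplies the explicit simple summands. Your final paragraph making explicit that every quotient of $\fs^N$ for $\fs$ simple is again of the form $\fs^k$ is a welcome clarification that the paper leaves implicit.
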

\begin{proof}
Since $2 \leq \dim V \leq +\infty$ and $V$ is irreducible,
$0 \neq \gtnr(\Cq)/\AnnV \hookrightarrow \sl(V)$.
By Lemma
\ref{lemma:Humphreys proposition}, $\gtnr(\Cq)/\AnnV$ is semisimple.
So, by Theorem \ref{thm:calJ contains ovlgtnrJfg}, there exist $f,g
\in \bbC[T]$ with
\[
f(0)g(0) \neq 0, \deg f \deg g > 0, (f,f') =(g,g') = 1,  f(\thetaone^m T)= f(T), g(\thetatwo^m T) = g(T)
\]
and
\[
\AnnV \supset \ovlgtnrJfg = \undrlgtnrJfg.
\]
Hence $\gtnr(\Cq)/\AnnV$ is isomorphic to a quotient of
$\gtnr(\Cq)/\ovlgtnrJfg$ which, by Proposition \ref{thm:gtnrR/ovlgtnrJ
cong gtnr(R/J)}, is isomorphic to $\gtnr\left( \Cq/\Jfg \right)$.
Lemma \ref{lemma:gtnr(Cq/Jfg)}, thus, tells us that
$\gtnr(\Cq)/\AnnV$ is, depending on $(q,\thetaone,\thetatwo)$,
isomorphic to $\mathfrak{g}_{2mn,\theta_1\rho}(\bbC)^k$ or
$\sl_{2n}(\bbC)^k$, for some $k \geq 0$.
\end{proof}

\begin{rem}  \label{remarkforTheorem}
By Section 3 and Section 4 in \cite{v}, we know that $\gtnr(\C_{q,\thetaone,\thetatwo})$ is isomorphic to
some double (twisted) loop algebra.
For example, we have
\begin{enumerate}
\item
$\gtnr(\C_{1,1,1})$ is isomorphic to $\fg_{2n,\rho}(\C) \otimes \C[t_1^{\pm 1},t_2^{\pm 1}]$, which is
the untwisted double loop algebra of $\fg_{2n,\rho}(\C)$ of type $X_n^{(1,1)}$ for $X_n$ equal to $C_n$ or $D_n$;

\item
$\gtnr(\C_{1,1,-1})$ is isomorphic to the twisted loop algebra of bi-affine algebra of type $A_{2n-1}^{(1,2)}$
which is a subalgebra of the double loop algebra $\sl_{2n}(\C) \otimes \C[t_1^{\pm 1},t_2^{\pm 1}]$;

\item
$\fg_{2n,-1}(\C_{-1,1,1})$ is isomorphic to the twisted loop algebra of bi-affine algebra of type $C_n^{(1,1)*}$
which is a subalgebra of the double loop algebra $\fg_{4n,-1}(\C) \otimes \C[t_1^{\pm 1},t_2^{\pm 1}]$, and so on.
\end{enumerate}
Moreover, the isomorphisms make our results coincide with the results obtained in \cite{L}.
\end{rem}

\section*{Acknowledgments}

Main part of the research was carried out during the visit of the second author at York University in 2009.
The hospitality and financial support of York University are gratefully acknowledged.
The second author is partially supported by the Recruitment Program of Global Youth Experts of China,
by the start-up funding from University of Science and Technology of China, by the Fundamental Research Funds for the Central Universities,
by NSF of China (Grant 11401551, 11471294, 11771410). The third author is partially supported by NSERC of Canada.

\end{document}